\newcommand{\ds}{\displaystyle}
\newcommand\ra{\rightarrow}
\newcommand{\iso}{\cong}
\numberwithin{equation}{section}
\newtheorem{thm}[equation]{Theorem}
\newtheorem{lem}[equation]{Lemma}
\newtheorem{cor}[equation]{Corollary}
\newtheorem{prop}[equation]{Proposition}
\newtheorem{qn}[equation]{Question}
\theoremstyle{definition}
\newtheorem{defn}[equation]{Definition}
\newtheorem{ex}[equation]{Example}
\theoremstyle{remark}
\newtheorem{rem}[equation]{Remark}
\theoremstyle{remark}
\newtheorem{assumption}[equation]{Assumption}
\newcommand{\ovl}{\overline}
\date{July 25, 2022}
\title[Algebraic groups and $G$-complete reducibility]
{Algebraic groups and $G$-complete reducibility: \\ a geometric approach}
\author[B.\ Martin]{Benjamin Martin}
\address
{Department of Mathematics,
University of Aberdeen,
King's College,
Fraser Noble Building,
Aberdeen AB24 3UE,
United Kingdom}
\email{B.Martin@abdn.ac.uk}
\begin{document}

\begin{abstract}
 The notion of a {\em $G$-completely reducible} subgroup is important in the study of algebraic groups and their subgroup structure.  It generalizes the usual idea of complete reducibility from representation theory: a subgroup $H$ of a general linear group $G= {\rm GL}_n(k)$ is $G$-completely reducible if and only if the inclusion map $i\colon H\ra {\rm GL}_n(k)$ is a completely reducible representation of $H$.  In these notes I  give an introduction to the theory of complete reducibility and its applications, and explain an approach to the subject using geometric invariant theory.
\end{abstract}

\maketitle

These notes are based on a series of six lectures delivered at the International Workshop on ``Algorithmic problems in group theory, and related areas'', held at the Oasis Summer Camp near Novosibirsk from July 26 to August 4, 2016.\smallskip\\

\noindent {\bf Expected background:} The reader is assumed to have a basic understanding of the algebraic geometry of affine varieties over an algebraically closed field: the material in \cite[Ch.\ 1]{hum} or \cite[Ch.\ AG]{borel} is more than sufficient.  I do {\bf not} assume any knowledge of linear algebraic groups; there is a brief review in Section~\ref{sec:1}.  Nonetheless the notes will be followed more easily by someone who has some familiarity with the subject.  For an introduction, I recommend one (or more!) of the excellent books of Humphreys \cite{hum}, Springer \cite{springer} and Borel \cite{borel}.\smallskip\\

\noindent {\bf Changes from the original notes:} The original version of these notes and the associated exercise sheets can be found at the conference website:\\ \url{http://www.math.nsc.ru/conference/isc/2016/main_eng.htm}.

  In this version I have added some references, corrected some mistakes and incorporated some material that was originally covered in the exercises.  I have reordered some text to make the exposition more coherent and expanded some of the arguments.\smallskip\\

\noindent {\bf Acknowledgements:} I am grateful to the workshop organisers Evgeny Vdovin, Alexey Galt, Fedor Dudkin, Maria Zvezdina, Andrey Mamontov, and Alexey Staroletov for their hospitality, and for permission to make these notes publicly available.  I also used these notes as a basis for some lectures at a Spring School on Complete Reducibility at Bochum University in April 2018, and I thank the organisers Gerhard R\"ohrle, Falk Bannuscher, Maike Gruchot and Alastair Litterick for their hospitality.  I'm also grateful to the workshop and spring school participants for their comments and for pointing out various mistakes.

\newpage
\centerline{{\sc Contents}}

\medskip
\begin{enumerate}
 \item[1.] {\bf Motivation and review of algebraic groups}
 \item[2.] {\bf $G$-complete reducibility}
 \item[3.] {\bf Geometric invariant theory}
 \item[4.] {\bf A geometric criterion for $G$-complete reducibility}
 \item[5.] {\bf Optimal destabilising parabolic subgroups}
 \item[6.] {\bf Other topics}
\end{enumerate}

\medskip

\section{Motivation and review of algebraic groups}
\label{sec:1}

My aim in these notes is to discuss the theory of $G$-completely reducible subgroups of a reductive linear algebraic group $G$, and describe an approach using geometric invariant theory.  In particular, I give a reasonably complete and self-contained explanation of two key results---the characterisation of $G$-completely reducible subgroups in terms of closed orbits (Theorem~\ref{thm:Gcr_crit}) and the construction of the Kempf-Rousseau-Hesselink optimal destabilising parabolic subgroup (Theorem~\ref{thm:opt})---introducing the necessary ideas from geometric invariant theory along the way.  Much of this geometric approach is based on work of Michael Bate, Benjamin Martin and Gerhard R\"ohrle \cite{BMR}, but many of the ideas are originally due to Roger Richardson \cite{rich4}.

Algebraic groups are to algebraic geometry what Lie groups are to differential geometry, and they appear in many areas of group theory, algebraic geometry and number theory.  Simple (and, more generally, reductive) algebraic groups arise as automorphism groups of interesting structures such as Lie algebras or Jordan algebras; they are the source of finite groups of Lie type and they have applications to spherical buildings.

The main idea of $G$-complete reducibility is to generalise the definition of a completely reducible representation from representation theory---which involves subgroups of a general linear group ${\rm GL}_n(k)$---to subgroups of an arbitrary reductive algebraic group $G$.  One can then check which results from representation theory work in this more general setting.  For instance, Clifford's Theorem still holds (see Theorem~\ref{thm:Clifford} below).

\medskip
\noindent {\bf Some applications:}
\begin{enumerate}
 \item The subgroup structure of reductive algebraic groups.  (Given a subgroup $H$ of $G$, either $H$ is $G$-completely reducible or it isn't!  We can say useful things in either case.)
 \item Simple groups of Lie type.
 \item Spherical buildings.
 \item Geometric invariant theory.
\end{enumerate}

\begin{ex}
\label{ex:GLn_rep}
 Let $V$ be an $n$-dimensional vector space over $k$ and let $\rho\colon H\ra {\rm GL}(V)$ be a representation of a linear algebraic group $H$.  We want to know whether $\rho$ is completely reducible.  Only the image of $\rho$ is important, so we might as well assume that $H\leq {\rm GL}(V)$ and $\rho$ is inclusion.
 
 Let $W$ be a subspace of $V$ and let $m= {\rm dim}(W)$.  Choose a basis $e_1,\ldots, e_m$ for $W$, then extend this to a basis for $V$ by adding vectors $e_{m+1},\ldots, e_n$.  We can identify ${\rm GL}(V)$ with ${\rm GL}_n(k)$ via this choice of basis, so we get an inclusion $\psi\colon H\ra {\rm GL}_n(k)$.  We see that $W$ is $H$-stable if and only if $\psi(H)\leq P$, where $P$ is the group of block upper triangular matrices with an $m\times m$ block followed by an $(n-m)\times (n-m)$ block down the leading diagonal.
 
 Now suppose $W$ is $H$-stable.  Then $W$ has an $H$-stable complement if and only if we can choose $e_{m+1},\ldots, e_n$ above in such a way that they span an $H$-stable subspace of $V$.  Now $e_{m+1},\ldots, e_n$ span an $H$-stable subspace of $V$ if and only if $\psi(H)\leq L$, where $L$ is the group of block diagonal matrices with an $m\times m$ block followed by an $(n-m)\times (n-m)$ block down the leading diagonal.
 
 When we change the basis for a vector space $V$, the matrix of a linear transformation of $V$ changes via conjugation by the change of basis matrix.  We have proved the following: if $H$ is a subgroup of ${\rm GL}_n(k)$ then $H$ stabilises an $m$-dimensional subspace $W$ of $k^n$ if and only if $H$ is ${\rm GL}_n(k)$-conjugate to a subgroup of $P$, and $H$ stabilises both an $m$-dimensional subspace and a complement to that subspace if and only if $H$ is ${\rm GL}_n(k)$-conjugate to a subgroup of $L$.  This gives a purely intrinsic description of complete reducibility in terms of subgroups of ${\rm GL}_n(k)$, without involving the vector space $k^n$.  It is this idea that we want to generalise.  We return to this situation in Example~\ref{ex:DLn_SLn_crit}.
\end{ex}

\subsection{Review of algebraic groups}
\label{sec:alggps}

{\ }\medskip\\
\noindent {\bf Some notation:} Let $k$ be an algebraically closed field.  Let $X$ be an affine variety over $k$.  We write $k[X]$ for the co-ordinate ring of $X$, and if $X$ is irreducible then we write $k(X)$ for the function field of $X$.  Given $x\in X$, we denote the tangent space to $X$ at $x$ by $T_xX$.  Given a morphism $\phi\colon X\ra Y$ of affine varieties, we denote by $d_x\phi\colon T_xX\ra T_{\phi(x)}Y$ the derivative of $\phi$ at $x$.

We recall the derivative criterion for separability.  Let $\phi\colon X\ra Y$ be a dominant morphism of irreducible affine varieties.  The comorphism $\phi^*\colon k[Y]\ra k[X]$ is injective, so it gives rise to an embedding of $k(Y)$ in $k(X)$.  We say that $\phi$ is {\em separable} if $k(X)/k(Y)$ is a separable field extension.  If $x\in X$ such that $x$ and $\phi(x)$ are smooth points and $d_x\phi$ is surjective then $\phi$ is separable.  Conversely, if $\phi$ is separable then there is a nonempty open subset $U$ of $X$ such that for all $x\in U$, $x$ and $\phi(x)$ are smooth points and $d_x\phi$ is surjective.\smallskip\\

\noindent {\bf Algebraic groups:} By ``algebraic group'' we mean ``linear algebraic group''.  Let $k$ be an algebraically closed field and let ${\rm SL}_n(k)$ be the group of $n\times n$ matrices of determinant 1 with entries from $k$.  An {\em algebraic group} is a closed subgroup of ${\rm SL}_n(k)$ for some $n$: that is, a subgroup of ${\rm SL}_n(k)$ that is the zero set of a set of polynomials in the matrix entries.  For instance, the special orthogonal group ${\rm SO}_n(k)$ is the subgroup of ${\rm SL}_n(k)$ given by the condition $AA^T= I$, and this condition can be expressed as $n^2$ polynomial equations in the $n^2$ matrix entries of $A$.  Note that ${\rm GL}_n(k)$---the group of invertible $n\times n$ matrices with entries from $k$---can be viewed as a closed subgroup of ${\rm SL}_{n+1}(k)$; it follows easily that a subgroup of ${\rm GL}_n(k)$ that is the zero set of a set of polynomials in the matrix entries is an algebraic group.   Moreover, ${\rm SL}_n(k)$ itself is given---as a subset of $M_n(k)$, the set of all $n\times n$ matrices over $k$---by the condition ${\rm det}(A)= 1$, which can be expressed in terms of polynomial equations in the matrix entries.  The additive group $(k,+)$ and the multiplicative group $(k^*,\cdot)$ of the field are algebraic.

An equivalent definition (but not obviously so!): An algebraic group is an affine variety with a group structure such that group multiplication and inversion are morphisms of varieties.

\medskip
\noindent {\bf The Zariski topology, subgroups and homomorphisms:} An algebraic group $H$ is an affine variety, so it carries the Zariski topology.  As $H$ acts transitively on itself by left multiplication, it is smooth, so its irreducible components and connected components coincide and these components all have the same dimension.  We denote by $H^0$ the unique connected component that contains the identity.  By a subgroup of $H$ we mean a closed subgroup unless otherwise stated; such a subgroup is an algebraic group in its own right.  If $N$ is a normal subgroup of $H$ then $H/N$ is also an algebraic group (this takes some work to prove).  A homomorphism of algebraic groups is assumed to be a morphism of varieties.  If $\phi\colon H_1\ra H_2$ is a homomorphism of algebraic groups then $\phi(H_1)$ is closed in $H_2$.  An epimorphism of algebraic groups with finite kernel is called an {\em isogeny}.  A product of two algebraic groups is an algebraic group.  By a {\em representation}\footnote{Often these are called {\em rational representations} in the literature: the adjective ``rational'' serves to emphasise that $\rho$ is a morphism of varieties.} of $H$ we mean a homomorphism $\rho$ from $H$ to ${\rm GL}_n(k)$ for some $n\in {\mathbb N}$ (or, equivalently, to ${\rm GL}(V)$ for some finite-dimensional vector space $V$ over $k$).  We often write $g\cdot v$ for $\rho(g)(v)$.  We define irreducibility and complete reducibility of representations in the usual way.

\medskip
\noindent {\bf The Lie algebra:} As for Lie groups, we can associate to an algebraic group $H$ a Lie algebra ${\mathfrak h}$ over $k$; ${\mathfrak h}$ is the tangent space $T_1(H)$ at the identity.  The action of $H$ on itself by conjugation gives rise to a representation ${\rm Ad}\colon H\ra {\rm GL}({\mathfrak h})$, called the {\em adjoint representation}.  For instance, we can identify the Lie algebra $\mathfrak{gl}_n(k)$ of ${\rm GL}_n(k)$ with $M_n(k)$ (with Lie bracket given by $[A,B]:= AB- BA$), and we have $g\cdot A= gAg^{-1}$ for $g\in {\rm GL}_n(k)$ and $A\in M_n(k)$.  The Lie algebra $\mathfrak{sl}_n(k)$ of ${\rm SL}_n(k)$ is the subalgebra of $M_n(k)$ consisting of the traceless matrices, and the adjoint action is also given by conjugation.

The close correspondence between Lie groups and their Lie algebras can falter for algebraic groups.  For instance, if ${\rm char}(k)= p> 0$ and $p$ divides $n$ then the centre $Z({\rm SL}_n(k))$ is finite, but the Lie algebra centre ${\mathfrak z}(\mathfrak{sl}_n(k))$ is 1-dimensional because scalar multiples of the identity matrix are traceless.  In the language of representations, ${\mathfrak z}(\mathfrak{sl}_n(k))$ is an ${\rm Ad}$-stable subspace of $\mathfrak{sl}_n(k)$; it is in fact easy to show that $\mathfrak{sl}_n(k)$ is not completely reducible.  In contrast, $\mathfrak{sl}_n(k)$ {\bf is} completely reducible if $p$ does not divide $n$.

\medskip
\noindent {\bf Unipotent and semisimple elements:} Let $i\colon H\ra {\rm GL}_n(k)$ be an embedding\footnote{By ``embedding'' we mean ``closed embedding''.} of algebraic groups (at least one such $i$ exists for any given $H$, by definition!)  We say that $h\in H$ is {\em unipotent} if $i(h)$ is conjugate to an upper unitriangular matrix (upper triangular with 1s on the diagonal), and we say that $h$ is {\em semisimple} if $i(h)$ is conjugate to a diagonal matrix (i.e., is diagonalisable).  This does not depend on the choice of embedding $i$.  There exist unique $h_{\rm s}, h_{\rm u}\in H$ such that $h_{\rm s}$ is semisimple, $h_{\rm u}$ is unipotent, $h= h_{\rm s}h_{\rm u}$ and $h_{\rm s}$ and $h_{\rm u}$ commute (this is the {\em Jordan decomposition} of $h$).  Uniqueness implies that if $M\leq H$ and $h$ belongs to $M$ then $h_{\rm s}$ and $h_{\rm u}$ also belong to $M$.  If $\phi\colon H_1\ra H_2$ is a homomorphism of algebraic groups and $h\in H_1$ then $\phi(h_{\rm s})= \phi(h)_{\rm s}$ and $\phi(h_{\rm u})= \phi(h)_{\rm u}$.  The group $H$ is {\em unipotent} if every element of $H$ is unipotent.  ({\bf Warning:} A semisimple group is {\bf not} a group consisting only of semisimple elements.)  Subgroups and quotients of unipotent groups are unipotent, and conversely, if $1\ra N\ra H\ra Q\ra 1$ is a short exact sequence of algebraic groups then $H$ is unipotent if $N$ and $Q$ are (these facts follow easily from the Jordan decomposition).  In characteristic 0, every nontrivial unipotent element has infinite order, every element of finite order is semisimple and every unipotent subgroup is connected.  In characteristic $p> 0$, an element is unipotent if and only if its order is a power of $p$, and an element of finite order is semisimple if and only if its order is coprime to $p$.

\medskip
\noindent {\bf The unipotent radical and reductive groups:} The {\em unipotent radical} $R_u(H)$ of $H$ is the unique largest connected normal unipotent subgroup of $H$.  We say that $H$ is {\em reductive} if $R_u(H)= 1$ (this is one of the most important and most useless definitions in the theory of algebraic groups!).  In general, $H/R_u(H)$ is reductive.  Since $R_u(H)= R_u(H^0)$, $H$ is reductive if and only if $H^0$ is reductive.

\medskip
\noindent {\bf Maximal tori and Borel subgroups}: A {\em torus} is an algebraic group that is isomorphic to $(k^*)^m$ for some $m$.  Any algebraic group contains a maximal torus $T$, and $T$ is unique up to conjugacy.  We define the {\em rank} of $H$ to be the dimension of a maximal torus of $H$.  A quotient of a torus is a torus.  A {\em Borel subgroup} $B$ of $H$ is a maximal connected solvable subgroup of $H$; these are also unique up to conjugacy.  (It is clear from the definitions that a conjugate of a maximal torus is a maximal torus, and a conjugate of a Borel subgroup is a Borel subgroup.)  For instance, if $H= {\rm SL}_n(k)$ then the subgroup $D_n= D_n(k)$ of diagonal matrices is a maximal torus, and the subgroup $B_n= B_n(k)$ of upper triangular matrices is a Borel subgroup; $R_u(B_n)$ is $U_n= U_n(k)$, the subgroup of upper unitriangular matrices.

\medskip
\noindent {\bf Characters and weights:} Let $X(H)$ denote the set of homomorphisms $\chi\colon H\ra k^*$; we call elements of $X(H)$ {\em characters}.  The set $X(H)$ is an abelian group under pointwise multiplication.  We use additive notation for $X(H)$.  The homomorphisms from $k^*$ to $k^*$ are precisely the maps of the form $a\mapsto a^n$ for some $n\in {\mathbb Z}$, so $X(k^*)\iso {\mathbb Z}$.  More generally, if $S$ is a torus of dimension $m$ then $X(S)$ is a free abelian group on $m$ generators.

Now let $\rho\colon H\ra {\rm GL}(V)$ be a representation and let $S$ be a torus of $G$.  We say that $0\neq v\in V$ is a {\em weight vector} of $V$ if there is a function $\chi\colon S\ra k^*$ such that $h\cdot v= \chi(h)v$ for all $h\in S$.  The function $\chi$ is uniquely determined by $v$ and $\chi$ is a character of $S$; we call $\chi$ a {\em weight} of $V$ with respect to $S$.  We define $\Phi_S(V)$ to be the set of weights.  If $\chi\in \Phi_S(V)$ then the set $V_\chi:= \{v\in V\mid h\cdot v= \chi(h)v\ \mbox{for all $h\in S$}\}$ is a subspace of $V$, called the {\em weight space} corresponding to $\chi$.  We have $\ds V= \bigoplus_{\chi\in \Phi_S(V)} V_\chi$---this follows from the classical result that commuting diagonalisable matrices can be simultaneously diagonalised.  So, given any $v\in V$, we have a unique decomposition $\ds v= \sum_{\chi\in \Phi_S(V)} v_\chi$, where each $v_\chi\in V_\chi$.  We define ${\rm supp}_S(v)= \{\chi\in \Phi_S(V)\mid v_\chi\neq 0\}$, and we call this set the {\em support} of $v$ (with respect to $S$): so we have $\ds v= \sum_{{\rm supp}_S(V)} v_\chi$.

The notion of a {\em cocharacter} will be crucial for us.  We discuss cocharacters in Section~\ref{sec:GIT}.

\medskip
\noindent {\bf Linearly reductive groups:} An algebraic group is {\em linearly reductive} if every representation of it is completely reducible.  The above argument shows that any torus is linearly reductive.  More generally, any algebraic group consisting entirely of semisimple elements is linearly reductive.

\medskip
\noindent {\bf Parabolic subgroups and Levi subgroups}: Let $H$ be connected and reductive.  A {\em parabolic subgroup} of $H$ is a subgroup $P$ of $H$ that contains a Borel subgroup of $H$ (such a subgroup is automatically closed); in particular, $H$ is a parabolic subgroup of $H$.  A {\em Levi subgroup} of $P$ is a maximal reductive subgroup $L$ of $P$.  Levi subgroups of $P$ exist; they are not unique, but they are unique up to $R_u(P)$-conjugacy (it is clear that a $P$-conjugate of a Levi subgroup of $P$ is a Levi subgroup of $P$).  Moreover, $P$ is isomorphic to the semidirect product $L\ltimes R_u(P)$; we denote by $c_L$ the canonical projection from $P$ to $L\cong P/R_u(P)$.  Parabolic subgroups and their Levi subgroups are connected.  We have $C_H(P)= Z(H)$ and $N_H(P)= P$ (here $C_H(\cdot)$ and $N_H(\cdot)$ denote the centraliser and normaliser, respectively), and $C_H(L)^0= Z(L)^0$ is a torus.  (In particular, $Z(H)^0$ is a torus as $H$ is a parabolic subgroup of $H$ with unique Levi subgroup $H$.)  Any conjugate of a parabolic subgroup is a parabolic subgroup, and $H$ has only finitely many conjugacy classes of parabolic subgroups.  If $P_1$ and $P_2$ are parabolic subgroups of $H$ then $P_1\cap P_2$ contains a maximal torus of $H$.  Later we will give another characterisation of parabolic subgroups and Levi subgroups.  We abuse notation and speak of Levi subgroups of $H$; by this we mean Levi subgroups of parabolic subgroups of $H$.

For example, let $H= {\rm SL}_n(k)$ (the description for ${\rm GL}_n(k)$ is completely analogous).  Fix ${\mathbf n}= (n_1,\ldots, n_r)\in {\mathbb N}^r$ such that $n_1+\cdots \cdots+ n_r= n$.  The subgroup $P_{\mathbf n}$ of block upper triangular matrices with diagonal block sizes $n_1,\ldots, n_r$ down the leading diagonal is a parabolic subgroup of ${\rm SL}_n(k)$; conversely, any parabolic subgroup of ${\rm SL}_n(k)$ is conjugate to one of these.  The subgroup $L_{\mathbb n}$ of block diagonal matrices with diagonal block sizes $n_1,\ldots, n_r$ down the leading diagonal is a Levi subgroup of $P$.  The subgroup $U_{\mathbf n}$ of block upper unitriangular matrices with diagonal block sizes $n_1,\ldots, n_r$ down the leading diagonal is the unipotent radical of $P$.  Two extreme cases: if $r= n$ and $n_1=\cdots = n_r= 1$ then $P_{\mathbf n}$ is the Borel subgroup $B_n$, $L_{\mathbf n}$ is the maximal torus $D_n$ and $U_{\mathbf n}$ is $U_n$, while if $r= 1$ and $n_1= n$ then $P_{\mathbf n}= L_{\mathbf n}= {\rm SL}_n(k)$ and $U_{\mathbf n}= 1$.

A proper parabolic subgroup $P$ of $H$ is never reductive.  One can often, however, prove results by induction on ${\rm dim}(H)$, by passing from $H$ to a Levi subgroup $L$ of a proper parabolic subgroup $P$ of $H$.  In general, $H$ can have very complicated connected reductive subgroups, but Levi subgroups of $H$ are well-behaved: for instance, they always contain a maximal torus of $H$.

\medskip
\noindent {\bf Simple groups and semisimple groups:} An algebraic group $H$ is {\em simple} if it is infinite and connected and has no infinite proper normal subgroups.  In this case, any normal subgroup of $H$ is central, $Z(H)$ is finite and $H/Z(H)$ is simple as an abstract group.  (For instance, ${\rm SL}_n(k)$ is simple if $n\geq 2$---note that ${\rm SL}_1(k)$ is the trivial group!)  An algebraic group is {\em semisimple} if it is connected and reductive and has finite centre.  Up to isogeny, a semisimple group is a finite product of simple groups, and a connected reductive group is a finite product of simple groups with a central torus; the simple groups that appear in this factorisation are called the {\em simple components}.  If $H$ is connected and reductive then $Z(H)$ consists of semisimple elements and is a finite extension of a torus; moreover, the commutator subgroup $[H,H]$ is semisimple and has the same simple components as $H$.

\medskip
\noindent {\bf The structure theory of reductive groups}: A connected reductive group is completely determined by specifying the dimension of the torus $Z(H)^0$ and some combinatorial information called the {\em root datum}.  The root datum is completely determined (at least for semisimple groups) by a Dynkin diagram and some extra information which is closely analogous to the fundamental group of a Lie group.  A semisimple group is simple if and only if the corresponding Dynkin diagram is irreducible.  There are four infinite families of irreducible Dynkin diagrams, yielding the simple groups of classical type: type $A_n$ (which corresponds to ${\rm SL}_{n+1}(k)$ up to isogeny), $B_n$ for $n\geq 2$ (the special orthogonal group ${\rm SO}_{2n+1}(k)$\footnote{For $p> 2$; there are some subtleties over a field of characteristic 2.}), $C_n$ for $n\geq 3$ (the symplectic group ${\rm Sp}_{2n}(k)$) and $D_n$ for $n\geq 4$ (the special orthogonal group ${\rm SO}_{2n}(k)$\footnote{For $p> 2$; there are some subtleties over a field of characteristic 2.}).  There are also five so-called exceptional irreducible Dynkin diagrams: $G_2$, $F_4$, $E_6$, $E_7$ and $E_8$; the corresponding groups are said to be of exceptional type.

\medskip
\noindent {\bf Good and bad primes:} If ${\rm char}(k)= 0$ then algebraic groups are well-behaved in many important ways.  The general philosophy is that an algebraic group is well-behaved if $p:= {\rm char}(k)$ is ``large enough''.  In particular, if $H$ is connected and reductive then one can define the notion of a {\em good prime} using the combinatorics of the root system.  A prime is {\em bad} if it is not good.  Here is the list of bad primes in each case: 2 is bad for simple groups of all types except $A_n$; 3 is bad for types $G_2$, $F_4$, $E_6$, $E_7$ and $E_8$, and 5 is bad for type $E_8$.  All primes are good for type $A_n$, but we say $p$ is {\em very good} for type $X$ if either $X\neq A_n$ and $p$ is good for type $X$, or $X= A_n$ and $p$ does not divide $n+1$.  The prime $p$ is good (very good) for a reductive group $H$ if it is good (very good) for every simple component of $H$.
In our example above of $Z({\rm SL}_n(k))$ and ${\mathfrak z}(\mathfrak{sl}_n(k))$, ${\rm SL}_n(k)$ is well-behaved as long as ${\rm char}(k)= 0$ or ${\rm char}(k)$ is very good for ${\rm SL}_n(k)$.

\medskip
\noindent {\bf What I left out:} There is a huge gap in the above summary: we have not discussed roots and related topics (the Weyl group, root systems, the root datum).  If you want to learn more about this elegant and powerful structure theory, I urge you to read the books of Humphreys \cite{hum}, Borel \cite{borel} or Springer \cite{springer}.  For instance, the parabolic subgroups containing a fixed Borel subgroup have a combinatorial characterisation in terms of roots.

\section{$G$-complete reducibility}
\label{sec:Gr}

\subsection{Definition and examples}

We make the following assumption for convenience.

\begin{assumption}
 {\bf From now on, we assume $G$ is a connected reductive group unless otherwise stated.}
\end{assumption}

\noindent One can, however, extend the definition of $G$-complete reducibility to subgroups of non-connected reductive groups \cite[Sec.\ 6]{BMR}.  See Section~\ref{sec:nonconn} below for a very brief discussion.

\begin{defn}
\label{defn:Gcr}
 Let $H$ be a subgroup of $G$.  Then $H$ is {\em $G$-completely reducible} ($G$-cr) if for any parabolic subgroup $P$ of $G$ that contains $H$, there is a Levi subgroup $L$ of $P$ such that $L$ contains $H$.  We say that $H$ is {\em $G$-irreducible} ($G$-ir) if $H$ is not contained in any proper parabolic subgroup of $G$.  It is immediate that if $H$ is $G$-ir then $H$ is $G$-cr (why?).
\end{defn}

\noindent Here is a useful observation.  Since any two Levi subgroups of a parabolic subgroup $P$ are $R_u(P)$-conjugate, a subgroup $H$ of $G$ is $G$-cr if and only if the following holds: for any parabolic subgroup $P$ of $G$ that contains $H$ and for any Levi subgroup $L$ of $P$, $H$ is $R_u(P)$-conjugate to a subgroup of $L$.

\begin{ex}
\label{ex:DLn_SLn_crit}
 Here is a criterion for $G$-complete reducibility when $G= {\rm GL}_n(k)$ or ${\rm SL}_n(k)$:\smallskip\\
 $(*)$\ \ If $H\leq G$ then $H$ is $G$-cr if and only if the inclusion $i\colon H\ra G$ is a completely reducible representation.\smallskip\\
We see that our definition of $G$-complete reducibility coincides with the usual notion of complete reducibility from representation theory when $G= {\rm GL}_n(k)$ or ${\rm SL}_n(k)$.  The representation theory of algebraic groups now gives us lots of examples of $G$-cr subgroups and non-$G$-cr subgroups of general linear and special linear groups.  For instance, the adjoint representation $\rho$ of ${\rm SL}_2(k)$ on its Lie algebra $\mathfrak{sl}_2(k)$ is completely reducible---in fact, irreducible---if ${\rm char}(k)\neq 2$, while if ${\rm char}(k)= 2$ then $\rho$ is not completely reducible.
It follows that ${\rm Im}(\rho)$ is ${\rm GL}_3(k)$-ir if ${\rm char}(k)\neq 2$ and is not ${\rm GL}_3(k)$-cr if ${\rm char}(k)= 2$.

 Recall the description of parabolic subgroups and Levi subgroups of $G$ in terms of block upper triangular and block diagonal subgroups.  Below we prove $(*)$ building on the arguments of Example~\ref{ex:GLn_rep}.  There is one subtlety, though.  In Example~\ref{ex:GLn_rep}, we showed that if $H\leq {\rm GL}_n(k)$ is completely reducible in the sense of representation theory and $H\leq P$ then $H$ is ${\rm GL}_n(k)$-conjugate to a subgroup of $L$.  To show that $H$ is ${\rm GL}_n(k)$-cr, we need to prove that $H$ is $R_u(P)$-conjugate to a subgroup of $L$.  It is {\bf not} enough to know that $H$ is ${\rm GL}_n(k)$-conjugate to a subgroup of $L$.  For instance, take $n=3$ and $H$ to be the subgroup of matrices of the form $\left(
\begin{array}{ccc}
 1 & 0 & *\\
 0 & 1 & 0\\
 0 & 0 & 1\\
\end{array}
\right)$, which sits inside the parabolic subgroup $P$ of matrices of the form $\left(
\begin{array}{ccc}
 * & * & *\\
 * & * & *\\
 0 & 0 & *\\
\end{array}
\right)$.  Then $gHg^{-1}$ is contained in the Levi subgroup $L$ of matrices of the form $\left(
\begin{array}{ccc}
 * & * & 0\\
 * & * & 0\\
 0 & 0 & *\\
\end{array}
\right)$, where $g= \left(
\begin{array}{ccc}
 1 & 0 & 0\\
 0 & 0 & 1\\
 0 & 1 & 0\\
\end{array}
\right)$, but $H$ is not $R_u(P)$-conjugate to a subgroup of $L$ since $H\subseteq R_u(P)$. 

 In the setting of Example~\ref{ex:GLn_rep}, when $P$ is the stabiliser of a single subspace $W$, suppose $H$ stabilises a complementary subspace $W'$ to $W$.  We can choose a new basis to have the form $f_1,\ldots, f_m, f_{m+1}, \ldots, f_n$, where $f_{m+1},\ldots, f_n$ span $W'$, $f_i= e_i$ for $1\leq i\leq m$, and for $m+1\leq i\leq n$ each $f_i$ is of the form $e_i$ plus some linear combination of the $e_j$ for $1\leq j\leq m$.  The change of basis matrix is then block upper triangular with an $m\times m$ identity block followed by an $(n-m)\times (n-m)$ identity block down the leading diagonal, and such a matrix belongs to $R_u(P)$.  Hence $H$ is $R_u(P)$-conjugate to a subgroup of $L$.  Conversely, we can prove in a similar fashion that if $H$ is $R_u(P)$-conjugate to a subgroup of $L$ then $H$ stabilises a complementary subspace to $W$.  In general, $P$ will be the stabiliser of a flag of subspaces and a slightly more complicated argument is needed, but the idea is the same.  The equivalence in $(*)$ now follows.
\end{ex}

\begin{ex}
\label{ex:BT}
 Let $U$ be a nontrivial unipotent subgroup of $G$.  The following construction is due to Borel-Tits.  Define $N_1= N_G(U)$, $U_1= UR_u(N_1)$ and then define $N_m$ and $U_m$ inductively by $N_{m+1}= N_G(U_m)$ and $U_{m+1}= U_mR_u(N_{m+1})$.  Then $U\leq U_1\leq U_2\leq\cdots$, and one can show that this sequence eventually stabilises (exercise), so the sequence $N_1, N_2,\ldots$ eventually stabilises.  Let ${\mathcal P}(U)$ be the eventual stable value of the latter sequence.  It can be shown that ${\mathcal P}(U)$ is a parabolic subgroup of $G$ and $U\leq R_u({\mathcal P}(U))$ \cite[30.3]{hum} (this is not so easy).  We see that $U$ is not contained in any Levi subgroup of ${\mathcal P}(U)$, since every Levi subgroup of ${\mathcal P}(U)$ has trivial intersection with $R_u({\mathcal P}(U))$.  It follows that $U$ is not $G$-cr.  We say that the parabolic subgroup ${\mathcal P}(U)$ is a {\em witness} that $U$ is not $G$-cr.
 
 The parabolic subgroup ${\mathcal P}(U)$ is canonical in the following sense: if $\phi\in {\rm Aut}(G)$ and $\phi(U)= U$ then $\phi({\mathcal P}(U))= {\mathcal P}(U)$ (this is clear from the construction).  In particular, $N_G(U)$ normalises ${\mathcal P}(U)$, so $N_G(U)\leq {\mathcal P}(U)$.  We conclude that if $H\leq G$ and $H$ has a nontrivial normal subgroup $U$ then $H$ is not $G$-cr, because ${\mathcal P}(U)$ is a witness that $H$ is not $G$-cr.
 
 Corollary: a $G$-cr subgroup of $G$ must be reductive.
\end{ex}

\begin{ex}
\label{ex:linred}
 Recall that an algebraic group $H$ is linearly reductive if every representation of $H$ is completely reducible.  It can be shown using the cohomological ideas mentioned in Section~\ref{sec:history} that if $H$ is a linearly reductive subgroup of $G$ then $H$ is $G$-cr.  If $H$ is linearly reductive then it is reductive.  The converse is also true in characteristic 0: $H$ is reductive if and only if it is linearly reductive.  In particular, any finite group is linearly reductive in characteristic 0.  In characteristic $p> 0$, however, $H$ is linearly reductive if and only if $H^0$ is a torus and $H/H^0$ has order coprime to ${\rm char}(k)$ (equivalently: $H$ is linearly reductive if and only if every element of $H$ is semisimple); in particular, if $H$ is finite then $H$ is linearly reductive if and only if $|H|$ is coprime to $p$.
\end{ex}

Example~\ref{ex:linred} says that $G$-complete reducibility is less interesting in characteristic 0: a subgroup $H$ of $G$ is $G$-cr if and only if it is reductive.  For this reason, we make the following assumption to simplify the exposition:

\begin{assumption}
 {\bf From now on, we assume that $p:= {\rm char}(k)$ is positive.}
\end{assumption}

\noindent Almost everything said below, however, also holds in characteristic 0 with suitable modifications.

\subsection{Some history}
\label{sec:history}

The notion of $G$-complete reducibility was first introduced by Serre \cite{serre3} (cf.\ also \cite{serre1}).  He gave an interpretation of $G$-complete reducibility in terms of spherical buildings; we briefly recall this now.  We define the spherical building $X(G)$ of $G$ to be the simplicial complex whose simplices are the parabolic subgroups of $G$, ordered by reverse inclusion.  We identify $X(G)$ with its geometric realisation.  The group $G$ acts on $X(G)$ by conjugation.  Let $H\leq G$; then the fixed-point set $X(G)^H$ is a subcomplex of $X(G)$, consisting of all the parabolic subgroups of $G$ that contain $H$.  Serre showed that $H$ is $G$-cr if and only if $X(G)^H$ is not contractible in the usual sense of topology (see \cite[Thm.\ 2]{serre3}).  He also proved the following result using building-theoretic methods \cite[Prop.\ 3.2]{serre2}.

\begin{prop}
\label{prop:Gcr_Lcr}
 Let $L$ be a Levi subgroup of $G$ and let $H\leq L$.  Then $H$ is $G$-cr if and only if $H$ is $L$-cr.
\end{prop}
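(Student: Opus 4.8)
The plan is to compare the parabolic subgroups of $G$ containing $H$ with those of $L$. Fix a parabolic subgroup $Q$ of $G$ having $L$ as a Levi subgroup, so $Q = L\ltimes R_u(Q)$ with canonical projection $c_L\colon Q\ra L$, and recall two standard structure-theoretic facts. (a) The map $P'\mapsto P'R_u(Q)$ is an inclusion-preserving bijection from the parabolic subgroups of $L$ onto the parabolic subgroups of $G$ contained in $Q$, under which a Levi subgroup of $P'$ (which, being a Levi subgroup of $L$, is a Levi subgroup of $G$) is again a Levi subgroup of $P'R_u(Q)$, and $R_u(P'R_u(Q)) = R_u(P')R_u(Q)$. (b) If $P$ and $Q'$ are parabolic subgroups of $G$ containing a common maximal torus $S_0$ of $G$ and $L_0$ is the Levi subgroup of $Q'$ containing $S_0$, then $P\cap L_0$ is a parabolic subgroup of $L_0$ with $R_u(P\cap L_0) = R_u(P)\cap L_0$, and some Levi subgroup of $P\cap L_0$ is also a Levi subgroup of $P$.

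For the direction ``$H$ is $G$-cr $\Rightarrow$ $H$ is $L$-cr'': I would take a parabolic subgroup $P'$ of $L$ with $H\leq P'$ and fix a Levi subgroup $M_0\leq L$ of $P'$. By (a), $M_0$ is a Levi subgroup of the parabolic subgroup $P:= P'R_u(Q)$ of $G$, and $H\leq P$. As $H$ is $G$-cr, $H$ lies in some Levi subgroup $M = uM_0u^{-1}$ of $P$ with $u\in R_u(P) = R_u(P')R_u(Q)$; writing $u = u_1u_2$ with $u_1\in R_u(P')$ and $u_2\in R_u(Q)$, and putting $H_1:= u_1^{-1}Hu_1\leq u_1^{-1}P'u_1 = P'\leq L$, I get $u_2^{-1}H_1u_2 = u^{-1}Hu\leq M_0\leq L$. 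Applying $c_L$ to the subgroup $u_2^{-1}H_1u_2$---it restricts to the identity on $L$ and sends $u_2$ to $1$---forces $u_2^{-1}H_1u_2 = H_1$, so $u_1^{-1}Hu_1 = H_1\leq M_0$ and $H$ lies in the Levi subgroup $u_1M_0u_1^{-1}$ of $P'$. Since $P'$ was arbitrary, $H$ is $L$-cr.

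The converse ``$H$ is $L$-cr $\Rightarrow$ $H$ is $G$-cr'' is the harder direction. Let $P$ be a parabolic subgroup of $G$ with $H\leq P$; since $H\leq L\leq Q$ we have $H\leq P\cap Q$, and I must put $H$ inside a Levi subgroup of $P$. Suppose first, optimistically, that $P$ is ``aligned'' with the given $L$, meaning $P$ and $L$ contain a common maximal torus $S_0$ of $G$ with $L$ the Levi subgroup of $Q$ through $S_0$. Then by (b), $P\cap L$ is a parabolic subgroup of $L$ containing $H$, with $R_u(P\cap L) = R_u(P)\cap L$ and with a Levi subgroup $M_0\leq L$ that is also a Levi subgroup of $P$. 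Since $H$ is $L$-cr, $H$ lies in a Levi subgroup of $P\cap L$, which is $R_u(P\cap L)$-conjugate---hence $R_u(P)$-conjugate---to $M_0$; thus $H$ lies in a Levi subgroup of $P$, as desired.

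The obstacle---and the step I expect to be the main difficulty---is that $P$ need not be aligned with the given $L$: $P\cap Q$, hence $P\cap L$, need not be a parabolic subgroup at all, as one already sees for $G = {\rm SL}_2(k)$ with $L$ a maximal torus and $P$ a Borel subgroup not containing it. One can always choose a maximal torus $S_0$ of $G$ inside $P\cap Q$ and pass to the Levi subgroup $L_0$ of $Q$ through $S_0$, which is aligned with $P$; but $L_0$ is only an $R_u(Q)$-conjugate of $L$, by an element which need not normalise $H$ or $P$, so the hypothesis that $H$ is $L$-cr does not transfer directly to $L_0$. The way I would get around this is to first establish, again using $c_L$ as in the first part, that the $R_u(Q)$-conjugates of $L$ still containing $H$ are exactly the $N_{R_u(Q)}(H)$-conjugates of $L$, and then to show by a relative-position argument that after replacing $P$ by a suitable $N_{R_u(Q)}(H)$-conjugate---which changes neither $H$ nor the statement to be proved---one may assume $P$ is aligned with the given $L$. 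Granted this reduction, the argument of the previous paragraph applies. Making the reduction rigorous, by reconciling the freedom to conjugate $P$ within $N_{R_u(Q)}(H)$ with the need for $P$ and $L$ to share a suitable maximal torus, is the technical heart of the matter.
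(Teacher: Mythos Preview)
The paper does not prove this proposition itself; it attributes the result to Serre and cites his building-theoretic argument \cite[Prop.\ 3.2]{serre2}.  Your direct structural approach via the bijection $P'\mapsto P'R_u(Q)$ and the projection $c_L$ is therefore a genuinely different route.  For the implication ``$H$ is $G$-cr $\Rightarrow$ $H$ is $L$-cr'' your argument is correct and complete: the decomposition $u=u_1u_2$ with $u_1\in R_u(P')$, $u_2\in R_u(Q)$, together with the observation that $c_L$ kills $u_2$ while fixing $H_1\leq L$, is exactly what is needed.

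The converse direction, however, has a genuine gap---one you explicitly flag but do not close.  Your intermediate claim that the Levi subgroups of $Q$ containing $H$ are precisely the $C_{R_u(Q)}(H)$-conjugates of $L$ is correct (indeed, your $c_L$ argument shows any $u\in R_u(Q)$ with $H\leq uLu^{-1}$ must centralise $H$).  But the proposed reduction---that after replacing $P$ by a suitable $C_{R_u(Q)}(H)$-conjugate it becomes aligned with $L$---amounts to asserting that some maximal torus of $P\cap Q$ lies in a Levi subgroup of $Q$ that \emph{also contains $H$}, and this does not follow from the intersection facts (a) and (b) you invoke.  The maximal tori of $P\cap Q$ form a single $(P\cap Q)$-conjugacy class, and there is no evident reason why conjugating within $P\cap Q$ should land you in $C_{R_u(Q)}(H)\cdot L$.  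Without this alignment step your ``aligned case'' argument, while correct in itself, does not apply.  Serre's approach sidesteps the difficulty by working with the fixed-point subcomplex $X(G)^H$ of the spherical building rather than with individual parabolics; alternatively, once the machinery of Sections~\ref{sec:GIT}--\ref{sec:Gcr_GIT} is available, one can argue via Theorem~\ref{thm:Gcr_crit} and the Hilbert--Mumford Theorem applied to the $L$-action on $L^m$.
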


Liebeck and Seitz studied the subgroup structure of simple groups $G$ of exceptional type \cite{liebeckseitz0}.  They proved that if $p> 7$ then every connected reductive subgroup of $G$ is $G$-cr.  A key tool in their proof was nonabelian cohomology.  Let $P$ be a proper parabolic subgroup of $G$ (where $G$ is an arbitrary connected reductive group once again), with unipotent radical $V$, and let $L$ be a Levi subgroup of $P$.  Let $H\leq P$.  Then $H$ acts on $V$ via the rule $h\cdot v= c_L(h)v c_L(h)^{-1}$, and we can form the nonabelian 1-cohomology $H^1(H,V)$ of $H$ with coefficients in $V$.  If $H^1(H,V)$ vanishes then $H$ is $V$-conjugate to a subgroup of $L$.
By a result of Richardson, $H^1(H,V)$ is trivial if $H$ is linearly reductive, so we deduce that linearly reductive subgroups of $G$ are always $G$-cr.

The group $V$ has a filtration $V= V_0\supset V_1\supset\cdots \supset V_r= 1$ by $H$-stable normal subgroups such that each quotient $V_i/V_{i+1}$ is a vector space and the induced action of $H$ on $V_i/V_{i+1}$ is linear.  Liebeck and Seitz used the abelian cohomology theory of reductive groups to study the 1-cohomology $H^1(H, V_i/V_{i+1})$ of each layer $V_i/V_{i+1}$ and prove that $H^1(H,V)$ vanishes when $G$ is simple and of exceptional type, $H$ is connected and reductive and $p> 7$.

David Stewart and others have used cohomological techniques to study the subgroup structure of simple groups $G$ in all characteristics (see, e.g., \cite{stewartG2}, \cite{stewartF4}, \cite{stewartTAMS}).  If $p$ is small then $H^1(H,V)$ need not vanish, so $G$ can admit connected reductive subgroups that are not $G$-cr.

\subsection{Further results and constructions}

\begin{prop}
\label{prop:product}
 Let $G_1$ and $G_2$ be connected reductive groups, and let $H$ be a subgroup of $G_1\times G_2$.  Let $\pi_i\colon G_1\times G_2\ra G_i$ be the canonical projection.  Then $H$ is $(G_1\times G_2)$-cr if and only if $\pi_1(H)$ is $G_1$-cr and $\pi_2(H)$ is $G_2$-cr.
\end{prop}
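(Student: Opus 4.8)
The plan is to reduce everything to the standard fact that parabolic subgroups and Levi subgroups are multiplicative across a direct product, and then to run two short, essentially formal arguments. First I would record the following. The Borel subgroups of $G:=G_1\times G_2$ are exactly the products $B_1\times B_2$ with $B_i$ a Borel subgroup of $G_i$: such a product is connected and solvable, and if $B_1\times B_2\leq S$ with $S$ connected solvable then each $\pi_i(S)$ is connected solvable and contains $B_i$, forcing $\pi_i(S)=B_i$ and hence $S\leq B_1\times B_2$. It follows (for instance from the classification of parabolic subgroups via the root datum, the Dynkin diagram of $G$ being the disjoint union of those of $G_1$ and $G_2$) that the parabolic subgroups of $G$ are precisely the products $P_1\times P_2$ with $P_i$ parabolic in $G_i$; that $R_u(P_1\times P_2)=R_u(P_1)\times R_u(P_2)$; and, since $P_1\times P_2\iso(L_1\times L_2)\ltimes\bigl(R_u(P_1)\times R_u(P_2)\bigr)$ for Levi subgroups $L_i$ of $P_i$, and Levi subgroups of a parabolic are unique up to conjugacy by its unipotent radical, that the Levi subgroups of $P_1\times P_2$ are precisely the products $L_1\times L_2$ of Levi subgroups. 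I would also note two trivial facts used throughout: $\pi_i(H)$ is a closed subgroup of $G_i$ (images of homomorphisms of algebraic groups are closed), and $H\leq\pi_1(H)\times\pi_2(H)$ (if $(h_1,h_2)\in H$ then $h_i=\pi_i(h_1,h_2)\in\pi_i(H)$), although in general $H$ is a proper, ``graph-like'' subgroup of this product.

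For the forward implication I would argue as follows. Assume $H$ is $G$-cr and let $Q$ be any parabolic subgroup of $G_1$ with $\pi_1(H)\leq Q$. Then $Q\times G_2$ is a parabolic subgroup of $G$ (a product of parabolics, $G_2$ being a parabolic subgroup of itself), and $H\leq\pi_1(H)\times\pi_2(H)\leq Q\times G_2$. By $G$-complete reducibility there is a Levi subgroup $M$ of $Q\times G_2$ with $H\leq M$, and by the structural description $M=M_1\times G_2$ with $M_1$ a Levi subgroup of $Q$ (the only Levi subgroup of $G_2$, viewed as a parabolic subgroup of itself, being $G_2$). Hence $\pi_1(H)\leq\pi_1(M)=M_1$, so $\pi_1(H)$ is contained in a Levi subgroup of $Q$. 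As $Q$ was arbitrary, $\pi_1(H)$ is $G_1$-cr, and by symmetry $\pi_2(H)$ is $G_2$-cr.

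For the converse, assume $\pi_1(H)$ is $G_1$-cr and $\pi_2(H)$ is $G_2$-cr, and let $P$ be any parabolic subgroup of $G$ containing $H$. Write $P=P_1\times P_2$ with $P_i$ parabolic in $G_i$; applying $\pi_i$ gives $\pi_i(H)\leq P_i$, so $G_i$-complete reducibility provides a Levi subgroup $L_i$ of $P_i$ with $\pi_i(H)\leq L_i$. Then $L_1\times L_2$ is a Levi subgroup of $P$, and $H\leq\pi_1(H)\times\pi_2(H)\leq L_1\times L_2$, so $H$ lies in a Levi subgroup of $P$. Since $P$ was arbitrary, $H$ is $G$-cr, which would complete the equivalence.

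I expect the only genuine obstacle to be the structural input itself, namely the product decomposition of parabolic and Levi subgroups; everything after that is purely formal. It is worth stressing that the $R_u(P)$-conjugacy subtlety highlighted in Example~\ref{ex:DLn_SLn_crit} does not intervene here, because the definition of $G$-complete reducibility only demands the \emph{existence} of a Levi subgroup containing $H$, and the product $L_1\times L_2$ of the Levi subgroups produced in the two factors supplies exactly such a subgroup.
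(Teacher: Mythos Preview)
Your proof is correct and follows exactly the approach the paper intends: the paper's own proof simply states the structural fact that parabolic and Levi subgroups of $G_1\times G_2$ are precisely products $P_1\times P_2$ and $L_1\times L_2$, then says ``the result now follows easily (exercise)''. You have carried out that exercise in full, and both directions are handled cleanly.
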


\begin{proof}
 Standard structure theory for reductive groups implies that the parabolic subgroups of $G_1\times G_2$ are precisely the subgroups of the form $P_1\times P_2$, where $P_i$ is a parabolic subgroup of $G_i$ for each $i$, and the Levi subgroups of $P_1\times P_2$ are precisely the subgroups of the form $L_1\times L_2$, where $L_i$ is a Levi subgroup of $P_i$ for each $i$.  The result now follows easily (exercise).
\end{proof}

\begin{rem}
 By a similar argument, we can prove the following: if $\phi\colon G_1\ra G_2$ is an isogeny of connected reductive groups and $H\leq G_1$ then $H$ is $G_1$-cr if and only if $\phi(H)$ is $G_2$-cr.  For the structure theory implies that if $P\leq G_1$ then $P$ is a parabolic subgroup of $G_1$ if and only if $\phi(P)$ is a parabolic subgroup of $G_2$, and in this case $\phi^{-1}(\phi(P))= P$.  Likewise, if $L\leq G_1$ then $L$ is a parabolic subgroup of $G_1$ if and only if $\phi(L)$ is a parabolic subgroup of $G_2$, and in this case $\phi^{-1}(\phi(L))= L$.
\end{rem}

It is natural to ask the following question.

\begin{qn}
\label{qn:chain}
 Let $M$ be a connected reductive subgroup of $G$ and let $H\leq M$.  Is it true that $H$ is $M$-cr if and only if $H$ is $G$-cr?
\end{qn}

The answer is yes if $M$ is a Levi subgroup of $G$, by Proposition~\ref{prop:Gcr_Lcr}.  In general, the answer is no.  For example, fusion problems can arise: if $P$ is a parabolic subgroup of $M$ with Levi subgroup $L$ and $H\leq P$ then $H$ might be $G$-conjugate to a subgroup of $L$, but not $M$-conjugate to a subgroup of $L$.  For a simple counter-example to Question~\ref{qn:chain} in one direction, just take $M$ to be non-$G$-cr and $H$ to be $M$: then $H$ is not $G$-cr but $H$ is trivially $M$-cr.  Counter-examples in the other direction are harder to find.  The first example below is due to Bate-Martin-R\"ohrle-Tange \cite[Sec.\ 7]{BMRT}; the second to Liebeck \cite[Ex.\ 3.45]{BMR}.

\begin{ex}
\label{ex:BMRT_TAMS}
 Let $p= 2$ and let $G$ be a simple group of type $G_2$.  Let $M$ be the short root subgroup of type $A_1\times A_1$.  Then $M$ has a subgroup $H\iso S_3$ such that $H$ is $G$-cr but not $M$-cr.  (Uchiyama has found similar examples for $G$ simple of type $E_6$, $E_7$ and $E_8$ in characteristic 2 \cite{uchiyama2}, \cite{uchiyama3}.)
\end{ex}

\begin{ex}
 Suppose $p= 2$.  Let $m\geq 2$ be even.  The symplectic group $M:= {\rm Sp}_{2m}$ is a subgroup of $G:= {\rm SL}_{2m}$, and $K:= {\rm Sp}_m\times {\rm Sp}_m$ is a subgroup of ${\rm Sp}_{2m}$.  Let $H$ be ${\rm Sp}_m$ diagonally embedded in ${\rm Sp}_m\times {\rm Sp}_m$.  We have a chain of inclusions
 $$ H\leq K\leq M\leq G. $$
 It can be shown that $H$ is $K$-cr---this follows from Proposition~\ref{prop:product}---and $G$-cr, but not $M$-cr: so we have a counter-example to {\bf both} directions of Question~\ref{qn:chain}!
\end{ex}

We finish this section with an application to finite subgroups of $G$.  If $F$ is a finite group and $p$ does not divide $|F|$ then $G$ has only finitely many conjugacy classes of subgroups isomorphic to $F$, by Maschke's Theorem (see \cite{slodowy}).  This fails in general: for instance, ${\rm SL}_2(k)$ has infinitely many conjugacy classes of subgroups isomorphic to $C_p\times C_p$, where $C_p$ is the cyclic group of order $p$ (exercise).  But we have the following result \cite[Thm.\ 1.2]{martin}, \cite[Cor.\ 3.8]{BMR}, which is based on work of E.B.~Vinberg \cite{vinberg}.  The proof involves ideas from geometric invariant theory, and is difficult.

\begin{thm}
\label{thm:fin_cr_rep}
 Let $F$ be a finite group.  Then $G$ has only finitely many conjugacy classes of $G$-cr subgroups that are isomorphic to $F$.
\end{thm}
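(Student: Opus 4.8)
The plan is to pass to the representation variety of $F$ in $G$, translate $G$-complete reducibility into closedness of orbits, and then reduce the statement to the finiteness of a geometric invariant theory quotient.

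First I would fix generators $f_1,\dots,f_r$ of $F$ and identify ${\rm Hom}(F,G)$ with the closed subvariety $V=\{\mathbf g=(g_1,\dots,g_r)\in G^r\mid \mathbf g\ \text{satisfies the defining relations of}\ F\}$ of $G^r$, on which $G$ acts by simultaneous conjugation; $V$ is closed because each relation of $F$ is an identity among words in the $g_i$. Conjugacy classes of subgroups of $G$ isomorphic to $F$ correspond to $(G\times{\rm Aut}(F))$-orbits on the subvariety of $V$ consisting of the injective homomorphisms, where ${\rm Aut}(F)$ acts by precomposition and commutes with the $G$-action; since ${\rm Aut}(F)$ is finite it is thus enough to bound the number of $G$-orbits on $V$ of the relevant kind. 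Next I would apply the geometric characterisation of $G$-complete reducibility (Theorem~\ref{thm:Gcr_crit}): for $\rho\in V$ the subgroup $\rho(F)$ is $G$-cr if and only if the $G$-orbit of $(\rho(f_1),\dots,\rho(f_r))$ is closed in $G^r$, hence in $V$. So everything comes down to showing that $V$ has only finitely many closed $G$-orbits.

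Now geometric invariant theory enters. Since $G$ is reductive and $V$ is affine, $k[V]^G$ is finitely generated; writing $\pi\colon V\to V/\!\!/G:={\rm Spec}\,k[V]^G$ for the quotient morphism, every fibre of $\pi$ contains a unique closed $G$-orbit, so the closed $G$-orbits in $V$ are in bijection with the points of $V/\!\!/G$. The theorem therefore reduces to the assertion that $V/\!\!/G$ is a finite set, i.e.\ that $k[V]^G$ is finite-dimensional over $k$: finitely many closed $G$-orbits then yields finitely many ${\rm Aut}(F)$-orbits on them, and hence finitely many conjugacy classes of $G$-cr subgroups isomorphic to $F$.

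The crux---and the main obstacle---is proving $\dim_k k[V]^G<\infty$, and this is precisely where the finiteness of $F$ must be used in an essential way (for $F={\mathbb Z}$ one has $V=G$, with infinitely many closed orbits). For $G={\rm GL}_n(k)$ one can argue directly: by results of Procesi and Donkin, $k[G^r]^G$ is generated by the coefficients of the characteristic polynomials of words in the $g_i^{\pm1}$, so on $V$ these invariants reduce to the finitely many functions $\rho\mapsto\sigma_j(\rho(f))$ with $f\in F$ and $1\le j\le n$; since $\rho(f)^{|F|}=1$, the eigenvalues of $\rho(f)$ lie among the finitely many $|F|$-th roots of unity, so each such function takes only finitely many values on $V$ and is therefore integral over $k$, and a finitely generated $k$-algebra generated by finitely many elements integral over $k$ is finite-dimensional. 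For a general connected reductive $G$ more is needed; the standard approach, due to Vinberg, spreads $G$ and ${\rm Hom}(F,G)$ out over ${\mathbb Z}$ and exploits the fact that in characteristic zero $H^1(F,{\mathfrak g})=0$ by Maschke's theorem, so that ${\rm Hom}(F,G)$ is smooth there with only finitely many $G$-orbits by Weil rigidity and hence has a finite quotient, after which finiteness of the quotient is transferred down to characteristic $p$ by a semicontinuity argument. (Equivalently, one shows that the semisimplification map sending $\rho$ to the $G$-cr representation lying in $\overline{G\cdot\rho}$ is constant on each irreducible component of ${\rm Hom}(F,G)$, which forces every $G$-invariant function on $V$ to take only finitely many values.) This is the genuinely hard, GIT-intensive part; the remainder of the argument is formal.
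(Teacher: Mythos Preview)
The paper does not actually prove this theorem: it merely states the result, cites \cite[Thm.\ 1.2]{martin} and \cite[Cor.\ 3.8]{BMR}, remarks that it is ``based on work of E.B.~Vinberg'', and says that ``the proof involves ideas from geometric invariant theory, and is difficult''. So there is no proof in the paper to compare against directly.

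That said, your outline is faithful to the argument in the cited sources. The reduction via ${\rm Hom}(F,G)\subseteq G^r$, the use of Theorem~\ref{thm:Gcr_crit} to identify $G$-cr images with closed orbits, and the passage to $V/\!\!/G$ are exactly how \cite{martin} and \cite{BMR} proceed (note that Theorem~\ref{thm:Gcr_crit} is proved later in these notes, but independently of the present theorem, so there is no circularity). Your ${\rm GL}_n$ argument via Procesi--Donkin generators and roots of unity is correct and is essentially Vinberg's original observation. You are also right that the general case is the genuine difficulty and that the standard route is Vinberg's spreading-out from characteristic zero; you have accurately flagged this as the non-formal step rather than attempting to bluff through it.

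One small point worth tightening: you pass from ``finitely many closed $G$-orbits in $V$'' to ``finitely many conjugacy classes of $G$-cr subgroups isomorphic to $F$'' via the ${\rm Aut}(F)$-action on the injective locus. This is fine, but note that a closed $G$-orbit in $V$ need not consist of injective homomorphisms, and conversely a $G$-cr subgroup isomorphic to $F$ may arise from several closed orbits (one per ${\rm Aut}(F)$-class of embeddings). Your bookkeeping handles this correctly, but it would be cleaner to say simply: there are finitely many closed $G$-orbits in $V$, each determines a single conjugacy class of $G$-cr subgroups (the image), and the map from orbits to conjugacy classes is surjective onto those classes of subgroups isomorphic to a quotient of $F$, hence in particular onto those isomorphic to $F$ itself.
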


\section{Geometric invariant theory}
\label{sec:GIT}

Let $H$ be an algebraic group and let $X$ be an affine variety.  An {\em action} of $H$ on $X$ is a function $H\times X\ra X$ which is a left action of the abstract group $H$ on the set $X$ and is also a morphism of varieties.  We call $X$ an {\em $H$-variety}.  Given $x\in X$, we denote the stabiliser of $x$ by $H_x$ and the orbit of $x$ by $H\cdot x$.  Every stabiliser $H_x$ is a closed subgroup of $H$.  We define the orbit map $\kappa_x\colon H\ra H\cdot x$ by $\kappa_x(h)= h\cdot x$.  The closure $\ovl{H\cdot x}$ is a union of $H$-orbits and $H\cdot x$ is an open subset of $\ovl{H\cdot x}$; so $H\cdot x$ is a quasi-affine variety and it makes sense to speak of $\dim(H\cdot x)$ (note that every irreducible component of $H\cdot x$ has the same dimension since $H$ acts transitively on $H\cdot x$).  Every orbit contained in $\ovl{H\cdot x}\backslash H\cdot x$ has dimension less than that of $H\cdot x$.  It follows that orbits of minimal dimension are closed.  In particular, if every orbit has the same dimension then all the orbits are closed.  

An {\em $H$-module} is a finite-dimensional vector space $V$ on which $H$ acts linearly (so the action of $H$ comes from a rational representation $\rho\colon H\ra {\rm GL}(V)$).  It is convenient to be able to reduce from arbitrary $H$-varieties to the special case of $H$-modules.  It turns out that any $H$-variety $X$ can be embedded $H$-equivariantly inside an $H$-module.  To see this, observe that $H$ acts on the co-ordinate ring $k[X]$ by $k$-algebra automorphisms: $(h\cdot f)(x):= f(h^{-1}\cdot x)$.  In particular, this action is $k$-linear.  One can show that any finite subset of $k[X]$ is contained in a finite-dimensional $H$-stable subspace $W$ of $k[X]$.  Then $W$ is an $H$-module, the dual space $V:= W^*$ is also an $H$-module, and there is a canonical $H$-equivariant map from $X$ to $V$ (exercise).  If $W$ is large enough in an appropriate sense then this map is an embedding.

Geometric invariant theory (GIT) is the study of this set-up (see \cite[Ch.\ 3]{newstead} for a good introduction).  A fundamental question is the following: if $X$ is an $H$-variety, does there exist a ``reasonable'' quotient variety?  We can answer this question when the group concerned is reductive.

\begin{thm}
 Let $X$ be a $G$-variety.  There exist an affine variety $X/\!\!/G$ and a $G$-invariant morphism $\pi_X\colon X\ra X/\!\!/G$ which satisfies the following universal mapping property: if $\psi\colon X\ra Y$ is a $G$-invariant morphism of varieties then there is a unique morphism $\psi_G\colon X/\!\!/G\ra Y$ such that $\psi= \psi_G\circ \pi_X$. 
\end{thm}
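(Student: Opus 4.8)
\section*{Proof proposal}

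The plan is to take $X/\!\!/G$ to be the affine variety whose co-ordinate ring is the ring of $G$-invariant regular functions on $X$. Recall that $G$ acts on $k[X]$ by $k$-algebra automorphisms via $(g\cdot f)(x):= f(g^{-1}\cdot x)$; write $k[X]^G$ for the subalgebra of $G$-fixed functions. The crucial point, and by far the main obstacle, is to show that $k[X]^G$ is a \emph{finitely generated} $k$-algebra. This is a substantial theorem. In characteristic $0$ it is Hilbert's classical finiteness theorem (using that a reductive group is linearly reductive, so that one has a $G$-equivariant Reynolds operator $k[X]\ra k[X]^G$). In positive characteristic it is due to Nagata: he showed that if a \emph{geometrically reductive} group acts by algebra automorphisms on a finitely generated $k$-algebra $A$ then $A^G$ is finitely generated, and Haboush's theorem tells us that reductive groups are geometrically reductive. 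Here geometric reductivity means: for every surjection $V\twoheadrightarrow W$ of $G$-modules and every nonzero $G$-fixed $w\in W$ there is a homogeneous $G$-invariant $f\in k[V]$ of positive degree that does not vanish at some preimage of $w$. Applying Nagata's theorem with $A= k[X]$ gives the finite generation we need.

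Granting this, I would define $X/\!\!/G$ to be the affine variety with $k[X/\!\!/G]= k[X]^G$; this is legitimate because $k[X]^G$ is reduced (being a subalgebra of the reduced algebra $k[X]$) and finitely generated. The inclusion $\iota\colon k[X]^G\hookrightarrow k[X]$ is a homomorphism of $k$-algebras, hence equals the comorphism $\pi_X^*$ of a unique morphism $\pi_X\colon X\ra X/\!\!/G$. This $\pi_X$ is $G$-invariant, since the image of $\pi_X^*$ lies in $k[X]^G$, on which $G$ acts trivially---and this is equivalent to the statement that $\pi_X(g\cdot x)= \pi_X(x)$ for all $g\in G$ and $x\in X$.

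Finally I would verify the universal mapping property. Because $X/\!\!/G$ is affine, it suffices to treat the case where $Y$ is affine; the general case reduces to it by covering $Y$ with affine opens and gluing, using that a $G$-invariant morphism into a variety is automatically constant on $G$-orbit closures (a variety is separated, so points are closed). So let $Y$ be affine and $\psi\colon X\ra Y$ a $G$-invariant morphism. For $f\in k[Y]$ and $g\in G$ one computes $g\cdot\psi^*(f)= \psi^*(f)$, so the image of $\psi^*\colon k[Y]\ra k[X]$ lies in $k[X]^G$; thus $\psi^*$ factors as $\psi^*= \iota\circ\varphi$ for a unique $k$-algebra homomorphism $\varphi\colon k[Y]\ra k[X]^G$, and $\varphi= \psi_G^*$ for a unique morphism $\psi_G\colon X/\!\!/G\ra Y$. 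Then $(\psi_G\circ\pi_X)^*= \pi_X^*\circ\psi_G^*= \iota\circ\varphi= \psi^*$, so $\psi= \psi_G\circ\pi_X$. For uniqueness, if $\psi_G'\circ\pi_X= \psi$ as well then $\pi_X^*\circ(\psi_G')^*= \psi^*= \pi_X^*\circ\psi_G^*$, and $\pi_X^*= \iota$ is injective, so $(\psi_G')^*= \psi_G^*$ and hence $\psi_G'= \psi_G$. Everything here except the finite generation of $k[X]^G$ is routine manipulation of comorphisms.
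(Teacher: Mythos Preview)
Your proposal is correct and matches the paper's approach: the paper does not actually give a proof but simply defines $X/\!\!/G$ as the affine variety with co-ordinate ring $k[X]^G$, notes that finite generation of $k[X]^G$ is a deep theorem, and takes $\pi_X$ to come from the inclusion $k[X]^G\hookrightarrow k[X]$. Your write-up supplies more detail---citing Nagata--Haboush for finite generation and verifying the universal property via comorphisms---than the paper itself provides.
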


\noindent The quotient variety $X/\!\!/G$ is---by definition---the affine variety whose co-ordinate ring is $k[X]^G$, the ring of invariants for the $G$-action on $k[X]$; it is a deep theorem that $k[X]^G$ is a finitely generated $k$-algebra.  The map $\pi_X$ comes from the inclusion of $k[X]^G$ in $k[X]$.

\begin{ex}
\label{ex:squared}
 All of the above holds for nonconnected reductive groups as well.  We temporarily suspend our assumption that $G$ is connected and look at a simple example where $G$ is finite.  Suppose $p\neq 2$.   Let $X= k$ and let $G= C_2= \langle g\mid g^2= 1\rangle$, acting on $X$ by $g\cdot x= -x$.  Now $k[X]$ is the polynomial ring $k[T]$, and $G$ acts on $k[T]$ by $g\cdot T= -T$ (and $g\cdot b= b$ for $b\in k$).  It is clear that $k[T]^G= k[T^2]$, which is also a polynomial ring in one indeterminate, so $X/\!\!/G\iso k$.  The map $\pi_G\colon X\ra X/\!\!/G$ comes from the inclusion of $k[T^2]= k[X]^G$ in $k[T]= k[X]$.  So $\pi_G\colon k\ra k$ is given by $\pi_G(b)= b^2$.
\end{ex}

In Example~\ref{ex:squared}, $X/\!\!/G$ is a set-theoretic quotient of $X$.  Unfortunately, this is false in general: for if $z\in X/\!\!/G$ and $\pi_X^{-1}(z)$ consists of a single orbit $G\cdot x$ then $G\cdot x$ must be closed, since $\{z\}$ is closed and morphisms of varieties are continuous.  Hence if $G\cdot x$ is not closed then $G\cdot x$ cannot be a fibre of $\pi_X$.  This is not an issue in Example~\ref{ex:squared} as all the orbits there are closed, but the next example illustrates what can go wrong.

\begin{ex}
 Let $X= k^n$ and let $G= k^*$ acting on $X$ by scalar multiplication in the usual way.  Then the only closed orbit is $\{0\}$, so $X/\!\!/G$ consists of just a single point.
\end{ex}

It is crucial, therefore, to know which orbits are closed.  The celebrated Hilbert-Mumford Theorem gives a criterion for this in terms of cocharacters. 

\begin{defn}
 A {\em cocharacter} (or {\em 1-parameter subgroup}) of $G$ is a homomorphism $\lambda\colon k^*\ra G$.
\end{defn}

\noindent We denote by $Y(G)$ the set of cocharacters.  There is a close analogy with $X(G)$, but note that pointwise multiplication does {\bf not} in general give a well-defined binary operation on $Y(G)$.  If $T$ is a maximal torus of $G$, however, then $Y(T)$---the set of cocharacters whose image lies in $T$---is a free abelian group under pointwise multiplication; if $T\iso (k^*)^m$ then $Y(T)$ has rank $m$.  We use additive notation for $Y(T)$: in particular, if $\lambda\in Y(T)$ and $n\in {\mathbb Z}$ then $n\lambda$ denotes the cocharacter given by $(n\lambda)(a)= \lambda(a)^n$.  If $\lambda\in Y(G)$ then ${\rm Im}(\lambda)$ is a torus, so $\lambda\in Y(T)$ for some maximal torus $T$ of $G$.  There is an action of $G$ on $Y(G)$ given by $(g\cdot \lambda)(a)= g\lambda(a)g^{-1}$; if $\lambda$ belongs to $Y(T)$ then $g\cdot \lambda$ belongs to $Y(gTg^{-1})$.

There is a natural nondegenerate bilinear pairing between $Y(T)$ and $X(T)$, defined as follows.  If $\lambda\in Y(T)$ and $\chi\in X(T)$ then $\chi\circ \lambda$ is an endomorphism of $k^*$, so is of the form $a\mapsto a^n$ for some $n\in {\mathbb Z}$; we set $\langle \lambda, \chi\rangle= n$.  Given $g\in G$ and $\chi\in X(T)$, we define $g\cdot \chi\in X(gTg^{-1})$ by $(g\cdot \chi)(gtg^{-1})= \chi(t)$ for $t\in T$.  A straightforward calculation shows that
\begin{equation}
\label{eqn:pairing_eqvrce}
 \langle g\cdot \lambda, g\cdot \chi\rangle= \langle \lambda, \chi\rangle
\end{equation}
for every $\lambda\in Y(T)$, $\chi\in X(T)$ and $g\in G$.  It is also easy to check that if $V$ is a $G$-module and $\chi$ is a weight for $V$ with respect to $T$ then $g\cdot \chi$ is a weight of $V$ with respect to $gTg^{-1}$ and $V_{g\cdot \chi}= g\cdot V_\chi$.

We now introduce the notion of limits.

\begin{defn}
 Let $f\colon k^*\ra X$ be a morphism of varieties.  We say that {\em $\lim_{a\to 0} f(a)$ exists} if $f$ extends to a morphism $\widehat{f}\colon k\ra X$.  If the limit exists, we set $\lim_{a\to 0} f(a)= \widehat{f}(0)$.  Note that $\widehat{f}$, if it exists, is unique, because $k^*$ is dense in $k$.
\end{defn}

\begin{rem}
\label{rem:limit_props}
 The following results follow easily from the definition of limit and  the universal mapping property for products.\smallskip\\
 (a) If $f\colon k^*\ra X$ and $h\colon X\ra Y$ are morphisms of varieties and $x:= \lim_{a\to 0} f(a)$ exists then $\lim_{a\to 0} (h\circ f)(a)$ exists, and $\lim_{a\to 0} (h\circ f)(a)= h(x)$.\smallskip\\
 (b) If $f_1\colon k^*\ra X_1$ and $f_2\colon k^*\ra X_2$ are morphisms then $\lim_{a\to 0} (f_1\times f_2)(a)$ exists if and only if $x_1:= \lim_{a\to 0} f_1(a)$ and $x_2:= \lim_{a\to 0} f_2(a)$ exists, and in this case $\lim_{a\to 0} (f_1\times f_2)(a)= (x_1, x_2)$.
\end{rem}

\begin{ex}
\label{ex:basic_lim}
 Let $n\in {\mathbb Z}$.  Define $f\colon k^*\ra k$ by $f(a)= a^n$.  If $n> 0$ then the morphism $\widehat{f}\colon k\ra k$ given by $f(a)= a^n$ is an extension of $f$, so $\lim_{a\to 0} f(a)$ exists and equals $0^n= 0$.  Likewise, if $n= 0$ then $\lim_{a\to 0} f(a)$ exists and equals $1$ (as usual, we interpret $a^0$ as 1 for any $a$).  On the other hand, if $n< 0$ then $\lim_{a\to 0} f(a)$ does {\bf not} exist.  For suppose otherwise.  Define $h\colon k^*\ra k$ by $h(a)= a^{-n}$.  Then $fh$ is the constant function 1, so $\lim_{a\to 0} (fh)(a)= 1$.  By Remark~\ref{rem:limit_props}(b), $\lim_{a\to 0} (f\times h)(a)= \left(\lim_{a\to 0} f(a), \lim_{a\to 0} h(a)\right)$.  Since multiplication is a morphism from $k^2$ to $k$, Remark~\ref{rem:limit_props}(a) implies that $\lim_{a\to 0} (fh)(a)= \left(\lim_{a\to 0} f(a)\right) \left(\lim_{a\to 0} h(a)\right)$.  But $ \lim_{a\to 0} h(a)= 0$ as $-n> 0$, so we get a contradiction.
\end{ex}

\noindent Here is our main application.  Let $X$ be a $G$-variety, let $x\in X$ and let $\lambda\in Y(G)$.  We want to know when $\lim_{a\to 0} \lambda(a)\cdot x$ exists (here we take $f(a)= \lambda(a)\cdot x$).  Note that $\lim_{a\to 0} \lambda(a)\cdot x$, if it exists, belongs to $\ovl{G\cdot x}$ (easy exercise).

\begin{ex}
\label{ex:GL2_lim}
 Let $X= {\rm GL}_2(k)$ and let $G= {\rm GL}_2(k)$ acting by conjugation on $X$ (so: $g\cdot x= gxg^{-1}$).  Define $\lambda\in Y(G)$ by $\lambda(a)=
\left(
\begin{array}{ccc}
 a & 0\\
 0 & a^{-1}\\
\end{array}
\right)
$.  Let $x_1=
\left(
\begin{array}{ccc}
 1 & 1\\
 0 & 1\\
\end{array}
\right).
$  Then
$$ \lambda(a)\cdot x_1= \lambda(a)x_1\lambda(a)^{-1}=
\left(
\begin{array}{ccc}
 a & 0\\
 0 & a^{-1}\\
\end{array}
\right)
\left(
\begin{array}{ccc}
 1 & 1\\
 0 & 1\\
\end{array}
\right)
\left(
\begin{array}{ccc}
 a^{-1} & 0\\
 0 & a\\
\end{array}
\right)
=
\left(
\begin{array}{ccc}
 1 & a^2\\
 0 & 1\\
\end{array}
\right),
$$ so $\lim_{a\to 0} \lambda(a)\cdot x_1=
\left(
\begin{array}{ccc}
 1 & 0\\
 0 & 1\\
\end{array}
\right)
= I$ as $\lim_{a\to 0} a^2= 0$.  It follows that $\pi_X(x_1)= \pi_X(I)$: so $G\cdot x_1$ ``disappears'' (or, rather, coalesces with the closed orbit $G\cdot I$) in the quotient variety $X/\!\!/G$.  We see that $X/\!\!/G$ is not a set-theoretic quotient; this illustrates the problem discussed above.

Now let $x_2=
\left(
\begin{array}{ccc}
 1 & 0\\
 1 & 1\\
\end{array}
\right).
$  Then
$$ \lambda(a)\cdot x_2= \lambda(a)x_2\lambda(a)^{-1}=
\left(
\begin{array}{ccc}
 a & 0\\
 0 & a^{-1}\\
\end{array}
\right)
\left(
\begin{array}{ccc}
 1 & 0\\
 1 & 1\\
\end{array}
\right)
\left(
\begin{array}{ccc}
 a^{-1} & 0\\
 0 & a\\
\end{array}
\right)
=
\left(
\begin{array}{ccc}
 1 & 0\\
 a^{-2} & 1\\
\end{array}
\right),
$$ so $\lim_{a\to 0} \lambda(a)\cdot x_2$ does not exist  as $\lim_{a\to 0} a^{-2}$ does not exist.  In fact, although the above calculation does not show this, we have $\pi_X(x_2)= \pi_X(I)$ (why?).

More generally, if $x= \left(
\begin{array}{ccc}
 b & c\\
 d & e\\
\end{array}
\right)
$ then $\lambda(a)\cdot x=
\left(
\begin{array}{ccc}
 b & a^2c\\
 a^{-2}d & e\\
\end{array}
\right)
$, so $\lim_{a\to 0} \lambda(a)\cdot x$ exists if and only if $x\in B_2$, and $\lim_{a\to 0} \lambda(a)\cdot x= x$ if and only if $x\in D_2$.
\end{ex}

\begin{ex}
\label{ex:GLn_lim}
 Let $X= {\rm GL}_n(k)$ and let $G= {\rm GL}_n(k)$ acting by conjugation on $X$.  Define $\lambda\in Y(G)$ by $\lambda(a)= {\rm diag}(a^{r_1},\ldots, a^{r_1}, a^{r_2},\ldots, a^{r_2},\ldots, a^{r_t},\ldots, a^{r_t})$.  Here ``${\rm diag}$'' denotes the diagonal matrix with the specified entries, $t$ is a positive integer, the $r_i$ are integers satisfying $r_1> r_2> \cdots > r_t$, and each term $a^{r_i}$ appears $m_i$ times, where $(m_1,\ldots, m_t):= {\mathbf m}$ is a $t$-tuple of positive integers such that $m_1+\cdots + m_t= n$.  A calculation like the one in Example~\ref{ex:GL2_lim} shows that if $x\in {\rm GL}_n(k)$ then $\lim_{a\to 0} \lambda(a)\cdot x$ exists if and only if $x\in P_{\mathbf m}$, and $\lim_{a\to 0} \lambda(a)\cdot x= x$ if and only if $x\in L_{\mathbf m}$.
\end{ex}

\begin{ex}
 We now give the characterisation of parabolic subgroups and their Levi subgroups promised earlier.  Let $\lambda\in Y(G)$.  Set
 $$ P_\lambda= \left\{g\in G\ \left| \lim_{a\to 0} \lambda(a)g\lambda(a)^{-1}\ \mbox{exists}\right\}\right. $$
 and set $L_\lambda= C_G({\rm Im}(\lambda))$.  It can be shown that $P_\lambda$ is a parabolic subgroup of $G$ and $L_\lambda$ is a Levi subgroup of $P_\lambda$.  Moreover, if $P$ is a parabolic subgroup of $G$ and $L$ is a Levi subgroup of $P$ then there exists $\lambda\in Y(G)$ such that $P= P_\lambda$ and $L= L_\lambda$.  For any $n\in {\mathbb N}$, $P_{n\lambda}= P_\lambda$ and $L_{n\lambda}= L_\lambda$.  Define $c_\lambda\colon P_\lambda\ra G$ by $c_\lambda(g)= \lim_{a\to 0} \lambda(a)g\lambda(a)^{-1}$; then $c_\lambda(P_\lambda)\leq L_\lambda$ and $c_\lambda(g)= c_{L_\lambda}(g)$ for all $g\in P_\lambda$.  In particular, $L_\lambda= \{g\in P_\lambda\mid c_\lambda(g)= g\}$ and $R_u(P_\lambda)= \{g\in P_\lambda\mid c_\lambda(g)= 1\}$.  We have $P_\lambda= G$ if and only if $L_\lambda= G$ if and only if ${\rm Im}(\lambda)\subseteq Z(G)^0$.
 
 Suppose $M$ is a connected reductive subgroup of $G$ and $\lambda\in Y(M)$.  We write $P_\lambda(M)$ for the set $\{m\in M\mid \lim_{a\to 0} \lambda(a)m\lambda(a)^{-1}\ \mbox{exists}\}$: that is, $P_\lambda(M)$ is the parabolic subgroup constructed above, but for $M$.  Likewise we write $L_\lambda(M)$ for $C_M({\rm Im}(\lambda))$.  It follows from the previous paragraph that $P_\lambda(M)= P_\lambda\cap M$, $L_\lambda(M)= L_\lambda\cap M$ and $R_u(P_\lambda(M))= R_u(P_\lambda)\cap M$.  For brevity, we write just $P_\lambda$ and $L_\lambda$ instead of $P_\lambda(G)$ and $L_\lambda(G)$.
\end{ex}

The next result follows easily from Remark~\ref{rem:limit_props}(a).

\begin{lem}
\label{lem:lim_P_dot_x}
 Let $X$ be a $G$-variety, let $x\in X$ and let $\lambda\in Y(G)$ such that $x':= \lim_{a\to 0} \lambda(a)\cdot x$ exists.  Then for any $g\in P_\lambda$, $\lim_{a\to 0} \lambda(a)\cdot (g\cdot x)$ exists and equals $c_\lambda(g)\cdot x'$.
\end{lem}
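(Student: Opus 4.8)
The plan is to realise $\lambda(a)\cdot(g\cdot x)$ as the value of a single morphism $k^*\to X$ that factors through a product of two morphisms whose limits we have already identified, and then quote Remark~\ref{rem:limit_props}. The whole argument is a bookkeeping exercise once the right product morphism is written down.

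First I would use the action axioms to rewrite, for every $a\in k^*$,
$$\lambda(a)\cdot(g\cdot x)= \bigl(\lambda(a)g\bigr)\cdot x= \bigl(\lambda(a)g\lambda(a)^{-1}\bigr)\cdot\bigl(\lambda(a)\cdot x\bigr),$$
the last step being associativity of the action after inserting $\lambda(a)^{-1}\lambda(a)= 1$. Then I would consider the morphism $\phi\colon k^*\to G\times X$ given by $\phi(a)= \bigl(\lambda(a)g\lambda(a)^{-1},\ \lambda(a)\cdot x\bigr)$; this is a morphism of varieties because $a\mapsto\lambda(a)$ is a morphism $k^*\to G$, inversion and multiplication in $G$ are morphisms, and the action map $G\times X\to X$ is a morphism. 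Its two coordinate functions are $f_1(a)= \lambda(a)g\lambda(a)^{-1}$ and $f_2(a)= \lambda(a)\cdot x$.

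Now $\lim_{a\to 0} f_1(a)= c_\lambda(g)$ exists: this is exactly what it means for $g$ to lie in $P_\lambda$, and $c_\lambda(g)$ is by definition this limit. By hypothesis $\lim_{a\to 0} f_2(a)= x'$ exists. Hence Remark~\ref{rem:limit_props}(b) applies and gives that $\lim_{a\to 0}\phi(a)$ exists and equals $\bigl(c_\lambda(g),x'\bigr)$. Composing $\phi$ with the action morphism $m\colon G\times X\to X$, $m(h,y)= h\cdot y$, Remark~\ref{rem:limit_props}(a) yields that $\lim_{a\to 0}(m\circ\phi)(a)$ exists and equals $m\bigl(c_\lambda(g),x'\bigr)= c_\lambda(g)\cdot x'$. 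By the displayed identity, $(m\circ\phi)(a)= \lambda(a)\cdot(g\cdot x)$, which is precisely the assertion of the lemma.

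There is no genuine obstacle here; the only thing to watch is that Remark~\ref{rem:limit_props}(b) requires \emph{both} coordinate limits to exist, which is exactly why the two hypotheses are needed: $g\in P_\lambda$ guarantees the conjugation limit exists (and names it $c_\lambda(g)$), and the existence of $x'$ is assumed. The reader should also verify the elementary identity $\lambda(a)\cdot(g\cdot x)= \bigl(\lambda(a)g\lambda(a)^{-1}\bigr)\cdot\bigl(\lambda(a)\cdot x\bigr)$, but this is immediate from associativity of a group action.
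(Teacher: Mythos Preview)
Your proof is correct and is precisely the argument the paper has in mind: the paper's proof consists solely of the line ``follows easily from Remark~\ref{rem:limit_props}(a)'', and what you have written is the natural unpacking of that remark via the factorisation $\lambda(a)\cdot(g\cdot x)=\bigl(\lambda(a)g\lambda(a)^{-1}\bigr)\cdot\bigl(\lambda(a)\cdot x\bigr)$. The only cosmetic difference is that you (rightly) invoke part~(b) as well as part~(a), whereas the paper cites only~(a); your version is the more accurate citation.
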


\begin{ex}
\label{ex:module_lim}
 It's easier to see what's going on with limits when $X$ is a $G$-module.  Let $V$ be a $G$-module and let $\lambda\in Y(G)$.  Choose a maximal torus $T$ of $G$ such that $\lambda\in Y(T)$.  If $\chi\in \Phi_T(V)$ and $0\neq v\in V_\chi$ then
 $$ \lambda(a)\cdot v= \chi(\lambda(a))v= a^nv, $$
 where $n:= \langle \lambda, \chi\rangle$.  Hence $\lim_{a\to 0} \lambda(a)\cdot v$ exists if and only if $n\geq 0$.  If $n> 0$ then the limit is 0, while if $n= 0$ then the limit is $v$.
 
 Now let $v$ be an arbitrary element of $V$.  We have $v= \sum_{\chi\in {\rm supp}_T(v)} v_\chi$.  It follows from the discussion above that $\lim_{a\to 0} \lambda(a)\cdot v$ 
exists if and only if $\langle \lambda, \chi\rangle\geq 0$ for all $\chi\in {\rm supp}_T(v)$, and in this case $v':= \lim_{a\to 0} \lambda(a)\cdot v$ is given by $v'= \sum_{\chi\in F} v_\chi$, where $F= \{\chi\in {\rm supp}_T(v)\mid \langle \lambda, \chi\rangle= 0\}$.

Here is a useful consequence.  If $v':= \lim_{a\to 0} \lambda(a)\cdot v$ exists then $v'$ is fixed by ${\rm Im}(\lambda)$.  In particular, $\lim_{a\to 0} \lambda(a)\cdot v= v$ if and only if ${\rm Im}(\lambda)$ fixes $v$.  The same is true for points in an arbitrary $G$-variety $X$, since we can embed $X$ $G$-equivariantly in a $G$-module.
\end{ex}

We can now state the Hilbert-Mumford Theorem.  It says that in order to check whether a $G$-orbit is closed, it is sufficient to check whether the orbits $S\cdot x$ are closed for all 1-dimensional subtori $S$ of $G$.

\begin{thm}
\label{thm:HMT}
 Let $X$ be a $G$-variety and let $x\in X$.  Then there exists $\lambda\in Y(G)$ such that $x':= \lim_{a\to 0} \lambda(a)\cdot x$ exists and $G\cdot x'$ is closed.
\end{thm}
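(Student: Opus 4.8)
The plan is to reduce to the case where $X$ is a $G$-module, to fix a closed orbit inside $\ovl{G\cdot x}$ as the target, and then to manufacture the required cocharacter from the valuative criterion together with the Cartan decomposition of $G$ over the field $k((t))$.

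\emph{Reduction to a $G$-module.} As recalled in Section~\ref{sec:GIT}, there is a $G$-equivariant closed embedding of $X$ into some $G$-module $V$. Since $X$ is then closed in $V$: an orbit $G\cdot y$ with $y\in X$ is closed in $X$ if and only if it is closed in $V$; the closure $\ovl{G\cdot x}$ may be computed in $V$; and $\lim_{a\to 0}\lambda(a)\cdot x$ exists in $X$ exactly when it exists in $V$ (its value then automatically lies in $\ovl{G\cdot x}\subseteq X$). So I may assume $X=V$ is a $G$-module and write $x=v$. Now fix a $G$-orbit $G\cdot w\subseteq\ovl{G\cdot v}$ of minimal dimension; it is closed (Section~\ref{sec:GIT}). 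It suffices to produce $\lambda\in Y(G)$ such that $\lim_{a\to 0}\lambda(a)\cdot v$ exists and lies in $G\cdot w$.

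\emph{Producing a cocharacter.} Since the orbit map $\kappa_v\colon G\ra\ovl{G\cdot v}$ has dense image $G\cdot v$ and $w\in\ovl{G\cdot v}$, a standard argument with the valuative criterion---choose an irreducible curve in $G$ whose image under $\kappa_v$ is dense in some curve of $\ovl{G\cdot v}$ through $w$, then normalise and complete---produces a point $\gamma\in G(k((t)))$ such that, viewed as a $k[[t]]$-point of the affine variety $\ovl{G\cdot v}$, $\gamma\cdot v$ specialises to $w$; informally, $\lim_{t\to 0}\gamma\cdot v=w$. Now apply the Cartan decomposition of the reductive group $G$ over $k((t))$: one may write $\gamma=g_1\,\lambda(t)\,g_2$ with $g_1,g_2\in G(k[[t]])$ and $\lambda\in Y(G)$. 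Put $h_i:=g_i(0)\in G$ and replace $v$ by $h_2\cdot v$, which changes neither $G\cdot v$ nor $\ovl{G\cdot v}$. Since $g_1^{-1}\in G(k[[t]])$ has value $h_1^{-1}$ at $t=0$, the existence of $\lim_{t\to 0}\gamma\cdot v$ forces $\lim_{t\to 0}\lambda(t)\cdot(g_2\cdot v)$ to exist; decomposing $g_2\cdot v$ into weight spaces for a maximal torus $T$ with $\lambda\in Y(T)$ and using the weight description of limits (Example~\ref{ex:module_lim}), this in turn forces $\langle\lambda,\chi\rangle\geq 0$ for every $\chi\in{\rm supp}_T(h_2\cdot v)$, so that $\lim_{a\to 0}\lambda(a)\cdot(h_2\cdot v)$ exists. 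Conjugating $\lambda$ by $h_2^{-1}$ then gives a cocharacter that carries $v$ to a point $y\in\ovl{G\cdot v}$.

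\emph{The main obstacle.} The delicate point is this last step: arranging that $\lim_{a\to 0}\lambda(a)\cdot v$ lands in the \emph{closed} orbit $G\cdot w$ rather than in some intermediate, non-closed orbit. Passing from $g_2$ to its value $h_2=g_2(0)$ discards the higher $t$-adic terms of the chosen curve, and a priori $\lim_{a\to 0}\lambda(a)\cdot(h_2\cdot v)$ differs from $h_1^{-1}\cdot w$ precisely by contributions coming from those terms. I see two standard ways to close the gap. The first is to settle the torus case by convex geometry---for a torus $T$, the orbit $T\cdot u$ is closed exactly when $0$ lies in the relative interior of the convex hull of ${\rm supp}_T(u)$, and choosing $\lambda$ in the relative interior of the cone $\{\mu\in Y(T)\mid \langle\mu,\chi\rangle\geq 0\ \text{for all}\ \chi\in{\rm supp}_T(u)\}$ drives $u$ to a point with that property---and then to reduce the case of general $G$ to the torus case. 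The second is an induction on $\dim\ovl{G\cdot v}$, using a composition lemma (in the spirit of Lemma~\ref{lem:lim_P_dot_x}) to the effect that two successive cocharacter limits can be realised by a single cocharacter, so that whenever $G\cdot y$ is not yet closed one iterates with a strict drop in orbit dimension. Either route takes genuine work, and for a self-contained account I would in the end cite one of the classical treatments---Mumford's book on geometric invariant theory, Kempf's paper on instability in invariant theory, or the expositions of Birkes or Kraft---rather than reproduce the whole proof here.
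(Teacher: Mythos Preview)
The paper does not prove Theorem~\ref{thm:HMT}: it is stated as the Hilbert--Mumford Theorem and used as a black box, so there is no proof in the text to compare against.

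Your sketch follows the standard route (valuative criterion plus the Cartan/Iwahori decomposition $G(k((t)))=G(k[[t]])\cdot\{\lambda(t):\lambda\in Y(G)\}\cdot G(k[[t]])$), and the obstacle you isolate is the right one.  Two remarks on how you propose to close it.  Your ``approach (2)'' is weaker than you suggest: there is no general lemma saying that two successive cocharacter limits can be replaced by a single one, and Lemma~\ref{lem:lim_P_dot_x} does not supply one, so the induction does not obviously terminate in a single $\lambda$.

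More importantly, the gap closes directly, without convex geometry or induction.  Keep your notation: $\gamma=g_1\,\lambda(t)\,g_2$, $h_i=g_i(0)$, and write $g_2\cdot v=\sum_\chi u_\chi(t)$ with $u_\chi(t)=\sum_{j\geq 0}u_{\chi,j}t^j\in V_\chi\otimes k[[t]]$.  Reading off the constant term of $\lambda(t)\cdot(g_2\cdot v)=g_1^{-1}\cdot(\gamma\cdot v)$ weight by weight gives
\[
 h_1^{-1}\cdot w=\sum_{\langle\lambda,\chi\rangle\leq 0} u_{\chi,\,-\langle\lambda,\chi\rangle},
 \qquad
 y:=\lim_{a\to 0}\lambda(a)\cdot(h_2\cdot v)=\sum_{\langle\lambda,\chi\rangle=0} u_{\chi,0}.
\]
The first expression shows that $\langle -\lambda,\chi\rangle\geq 0$ for every $\chi\in{\rm supp}_T(h_1^{-1}\cdot w)$, so $\lim_{a\to 0}(-\lambda)(a)\cdot(h_1^{-1}\cdot w)$ exists; comparing the two expressions shows that this limit is exactly $y$.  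Hence $y\in\ovl{G\cdot w}=G\cdot w$, since $G\cdot w$ is closed.  Conjugating $\lambda$ by $h_2^{-1}$ then gives a cocharacter taking $v$ into $G\cdot w$.  So the Cartan decomposition already does all the work; you do not need to defer to Kempf or Mumford.
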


\begin{rem}
\label{rem:HMT_discussion}
 If $G\cdot x$ is already closed then we can just take $\lambda= 0$ and $x'= x$.  Otherwise, $\lambda$ is nontrivial and $x'$ does not lie in $G\cdot x$.  It follows that $S\cdot x$ is not closed and $x'\in \ovl{S\cdot x}$, where $S$ is the torus ${\rm Im}(\lambda)$.
 
 We can extract from the Hilbert-Mumford Theorem a useful criterion for an orbit to be closed: $G\cdot x$ is closed if and only if for all $\lambda\in Y(G)$ such that $x':= \lim_{a\to 0} \lambda(a)\cdot x$ exists, $x'$ belongs to $G\cdot x$.
\end{rem}

What can we say if $\lim_{a\to 0} \lambda(a)\cdot x$ exists but still lies in $G\cdot x$?  The following preliminary result is standard.

\begin{lem}
\label{lem:wts_increase}
 Let $V$, $\lambda$ and $T$ be as in Example~\ref{ex:module_lim}.  Let $\chi\in \Phi_T(V)$, let $v\in V_\chi$ and let $g\in P_\lambda$ (resp., $g\in R_u(P_\lambda)$).  Then for all $\chi'\in {\rm supp}_T(g\cdot v- v)$, $\langle \lambda, \chi'\rangle \geq \langle \lambda, \chi\rangle$ (resp., $\langle \lambda, \chi'\rangle> \langle \lambda, \chi\rangle$).
\end{lem}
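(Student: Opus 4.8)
The plan is to reduce the statement to the analysis of limits in a $G$-module carried out in Example~\ref{ex:module_lim}. We may assume $v\neq 0$, since otherwise $g\cdot v-v=0$ and the claim is vacuous. Write $n:=\langle\lambda,\chi\rangle$ and $w:=g\cdot v-v$; note that $\lambda(a)\cdot v=\chi(\lambda(a))v=a^{n}v$ for $a\in k^{*}$.

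The first step is a short computation using $G$-equivariance of the action together with linearity: for $a\in k^{*}$,
$$\lambda(a)\cdot w=\lambda(a)\cdot(g\cdot v)-\lambda(a)\cdot v=\bigl(\lambda(a)g\lambda(a)^{-1}\bigr)\cdot\bigl(\lambda(a)\cdot v\bigr)-a^{n}v=a^{n}\Bigl(\bigl(\lambda(a)g\lambda(a)^{-1}\bigr)\cdot v-v\Bigr).$$
Set $\phi(a):=a^{-n}\lambda(a)\cdot w=\bigl(\lambda(a)g\lambda(a)^{-1}\bigr)\cdot v-v$, a morphism $k^{*}\ra V$. The crucial point is that $\phi$ extends to a morphism $\widehat{\phi}\colon k\ra V$: since $g\in P_{\lambda}$, the map $a\mapsto\lambda(a)g\lambda(a)^{-1}$ extends to $k$ with value $c_{\lambda}(g)$ at $0$, and composing with the morphism $G\ra V$, $h\mapsto h\cdot v-v$, and invoking Remark~\ref{rem:limit_props}(a), shows that $\widehat{\phi}(0)=c_{\lambda}(g)\cdot v-v$.

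Next I would compare $\phi$ with the $T$-weight decomposition $w=\sum_{\chi'\in{\rm supp}_{T}(w)}w_{\chi'}$, which gives $\phi(a)=\sum_{\chi'}a^{\langle\lambda,\chi'\rangle-n}\,w_{\chi'}$ for $a\in k^{*}$. The $w_{\chi'}$ lie in distinct weight spaces, hence are linearly independent, so exactly the reasoning of Examples~\ref{ex:basic_lim} and~\ref{ex:module_lim} applies: the existence of $\widehat{\phi}$ forces $\langle\lambda,\chi'\rangle-n\geq 0$ for every $\chi'\in{\rm supp}_{T}(w)$ (this is the first assertion), and moreover $\widehat{\phi}(0)=\sum_{\chi'\in F}w_{\chi'}$ with $F:=\{\chi'\in{\rm supp}_{T}(w):\langle\lambda,\chi'\rangle=n\}$. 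For the parenthetical case, if $g\in R_{u}(P_{\lambda})$ then $c_{\lambda}(g)=1$, so $\widehat{\phi}(0)=0$ and therefore $F=\emptyset$; together with $\langle\lambda,\chi'\rangle-n\geq 0$ this upgrades the inequality to $\langle\lambda,\chi'\rangle>\langle\lambda,\chi\rangle$ for all $\chi'\in{\rm supp}_{T}(g\cdot v-v)$.

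I do not expect a serious obstacle. The only thing requiring care is the bookkeeping in the displayed identity and the observation that it is the \emph{rescaled} map $\phi(a)=a^{-n}\lambda(a)\cdot w$, rather than $\lambda(a)\cdot w$ itself, whose extendability over $0$ carries the information we want; granting that, the lemma falls out immediately from facts already established about limits in $G$-modules.
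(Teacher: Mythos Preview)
Your proof is correct. Note that the paper does not actually supply a proof of this lemma: it merely records that the result ``is standard'' and states it without argument. So there is no paper proof to compare against.

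That said, your approach is well adapted to the surrounding text. A textbook treatment of this ``standard'' fact would typically invoke the root-space decomposition of $P_\lambda$ and $R_u(P_\lambda)$ (root subgroups $U_\alpha$ with $\langle\lambda,\alpha\rangle\geq 0$, respectively $>0$, shift weights by non-negative, respectively positive, multiples of $\alpha$). Your argument sidesteps that structure theory entirely: by passing to the rescaled map $\phi(a)=a^{-n}\lambda(a)\cdot w$ and reading off its extendability from the definition of $P_\lambda$ and $c_\lambda$, you reduce everything to the limit analysis already carried out in Examples~\ref{ex:basic_lim} and~\ref{ex:module_lim}. This is more self-contained within the framework of these notes and arguably a better fit than an appeal to roots, which the paper deliberately leaves out.
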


\begin{thm}
\label{thm:mainconjalgclsd}
 Let $X$ be a $G$-variety and let $x\in X$.  Let $\lambda\in Y(G)$ and suppose $x':= \lim_{a\to 0} \lambda(a)\cdot x$ exists and $x'\in G\cdot x$.  Then $x'\in R_u(P_\lambda)\cdot x$.
\end{thm}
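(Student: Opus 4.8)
The plan is to reduce to the case where $X$ is a $G$-module, use the $\lambda$-weight grading to describe $x'$ and the relevant subsets explicitly, and then carry out the one substantial step, which is to upgrade the $G$-conjugacy $x'\in G\cdot x$ to an $R_u(P_\lambda)$-conjugacy. First I would reduce to $X=V$, a $G$-module: any $G$-variety embeds $G$-equivariantly in a $G$-module $V$, and $X$ is $G$-stable in $V$, so the set $R_u(P_\lambda)\cdot x$ formed in $V$ already lies in $X$ and it suffices to treat $V$. Now choose a maximal torus $T$ of $G$ with $\lambda\in Y(T)$ and decompose $V=\bigoplus_{n\in{\mathbb Z}}V(n)$ by $\lambda$-weight, so that $\lambda(a)$ acts on $V(n)$ as multiplication by $a^n$. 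Writing $x=\sum_n x_n$ with $x_n\in V(n)$: since $\lim_{a\to 0}\lambda(a)\cdot x$ exists, Example~\ref{ex:module_lim} gives $x_n=0$ for $n<0$ and $x'=x_0$. Put $V_{>0}=\bigoplus_{n\geq 1}V(n)$ and $Y=x_0+V_{>0}$; then $Y=\{y\in V:\lim_{a\to 0}\lambda(a)\cdot y=x'\}$ (a point of $Y$ has all $\lambda$-weights $\geq 0$ with weight-$0$ part $x_0$, and conversely), so in particular $x,x'\in Y$, and by Lemma~\ref{lem:lim_P_dot_x} together with $c_\lambda(u)=1$ for $u\in R_u(P_\lambda)$ the set $Y$ is stable under $R_u(P_\lambda)$. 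Finally the morphism $a\mapsto\lambda(a)\cdot x=x_0+\sum_{n\geq 1}a^n x_n$ maps $k$ into $Y\cap(G\cdot x)$ with value $x$ at $a=1$ and $x'$ at $a=0$, so $x$ and $x'$ lie on a common irreducible curve in $Y\cap(G\cdot x)$. The theorem now reduces to the assertion $x'\in R_u(P_\lambda)\cdot x$.

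For this last assertion the hypothesis $x'\in G\cdot x$ — as opposed to the weaker $x'\in\overline{G\cdot x}$, under which the conclusion would be false — has to be used in an essential way. Writing $x'=g\cdot x$, one has that the cocharacter $g^{-1}\cdot\lambda$ fixes $x$ and that $\lambda(a)\cdot x=(\lambda(a)g^{-1}\lambda(a)^{-1})\cdot x'$ for all $a$. I would then lift the curve $a\mapsto\lambda(a)\cdot x$ through the orbit map $G\to G\cdot x$ to a morphism $k\to G$ whose value at $a=0$ lies in $R_u(P_\lambda)$; that value $u$ then satisfies $u\in R_u(P_\lambda)$ and $u\cdot x=x'$, as required. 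Over $k\setminus\{0\}$ the lift $a\mapsto\lambda(a)$ is already available, so only the behaviour at $a=0$ is at stake. Concretely one expands $g^{-1}$ in a way adapted to $\lambda$, using the open ``big cell'' $R_u(P_{-\lambda})\times L_\lambda\times R_u(P_\lambda)\hookrightarrow G$ (with a preliminary reduction if $g$ lies outside this cell), and conjugates by $\lambda(a)$: the $R_u(P_\lambda)$-part gets contracted to $1$, and the existence of $\lim_{a\to 0}\lambda(a)\cdot x$ forces the $R_u(P_{-\lambda})$-part to behave, so that one can read off the required $u$. An alternative finish avoids the explicit lift: $R_u(P_\lambda)\cdot x$ is closed in $V$ because $R_u(P_\lambda)$ is unipotent (Kostant--Rosenlicht), and a weight comparison on the tangent space $T_{x'}(G\cdot x)={\mathfrak g}\cdot x'$ shows that $R_u(P_\lambda)\cdot x$ has full dimension in $Y\cap(G\cdot x)$, hence exhausts the irreducible subset of $Y\cap(G\cdot x)$ that contains $x$ and $x'$.

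The hard part is exactly this final step. Because $g$ need not lie in $P_\lambda$, Lemma~\ref{lem:lim_P_dot_x} is not directly applicable, and one genuinely needs the structure theory of $P_\lambda$ relative to $\lambda$ — the big-cell decomposition, or equivalently the fact that $R_u(P_\lambda)$ sweeps out the ``contracting directions'' of $\lambda$ — to make the conjugating element land in $R_u(P_\lambda)$. A further technical wrinkle in characteristic $p$ is that the orbit map $G\to G\cdot x$ and the stabiliser group schemes that appear can fail to be smooth; this turns out to be harmless here, since the curve $a\mapsto\lambda(a)\cdot x$ lifts explicitly over $k\setminus\{0\}$ and so the whole difficulty is localised at the single point $a=0$.
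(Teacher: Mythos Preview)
Your Approach~A is essentially the paper's argument, and the paper fills in exactly the two places you left vague. First, the ``preliminary reduction if $g$ lies outside the big cell'': rather than decomposing $g$ itself, the paper uses that $P_\lambda R_u(P_{-\lambda})$ contains an open neighbourhood of $1$ in $G$, so $P_\lambda R_u(P_{-\lambda})\cdot x'$ is open in $G\cdot x'=G\cdot x$ (orbit maps are open) and therefore contains $\lambda(a)\cdot x$ for some $a\in k^*$. Writing $\lambda(a)\cdot x=g'u\cdot x'$ with $g'\in P_\lambda$, $u\in R_u(P_{-\lambda})$ gives $h\cdot x=u\cdot x'$ for $h:=(g')^{-1}\lambda(a)\in P_\lambda$. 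Second, ``the existence of the limit forces the $R_u(P_{-\lambda})$-part to behave'' is made precise by Lemma~\ref{lem:wts_increase} applied to $-\lambda$: since $\lambda$ fixes $x'$, every $\chi\in\operatorname{supp}_T(u\cdot x'-x')$ satisfies $\langle\lambda,\chi\rangle<0$, but $\lim_{a\to 0}\lambda(a)\cdot(u\cdot x')=c_\lambda(h)\cdot x'$ exists by Lemma~\ref{lem:lim_P_dot_x}, so $u\cdot x'=x'$ and $x'=h\cdot x\in P_\lambda\cdot x$. A final decomposition $h=lv$ with $l\in L_\lambda$, $v\in R_u(P_\lambda)$, and one more limit, gives $x'=v\cdot x$.

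Your Approach~B (Kostant--Rosenlicht plus a tangent-space dimension count) is a genuinely different and attractive route, but as written it has a gap in positive characteristic. The identity $T_{x'}(G\cdot x)=\mathfrak{g}\cdot x'$ you invoke is precisely the separability of the orbit map at $x'$, which can fail when the scheme-theoretic stabiliser $G_{x'}$ is not smooth. Your remark that non-smoothness is ``harmless here'' applies only to the curve-lifting in Approach~A, not to this tangent-space step. Without separability one has only $\mathfrak{g}\cdot x'\subsetneq T_{x'}(G\cdot x)$, so the positive-weight part of $T_{x'}(G\cdot x)$, which bounds the dimension of the attractor $Y\cap(G\cdot x)$ from above, may strictly exceed $\dim(\mathfrak{u}^+\cdot x')$, and then closedness of $R_u(P_\lambda)\cdot x$ no longer forces it to fill the component through $x$ and $x'$. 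With a separability hypothesis your Approach~B does go through (one also needs the semicontinuity observation $\dim(\mathfrak{u}^+\cdot x')\leq\dim(\mathfrak{u}^+\cdot x)\leq\dim(R_u(P_\lambda)\cdot x)$, which follows by letting $\lambda(a)\cdot x\to x'$); the paper's big-cell argument avoids this hypothesis entirely.
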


\begin{proof}
 (Sketch): We can embed $X$ $G$-equivariantly inside a $G$-module, so without loss we can assume $X$ is a $G$-module.  Choose a maximal torus $T$ such that $\lambda\in Y(T)$.  First we show that $x'\in P_\lambda\cdot x$.  The set $P_\lambda R_u(P_{-\lambda})$ contains the so-called ``big cell'', which is an open neighbourhood of 1 in $G$.  The orbit map $\kappa_{x'}\colon G\ra G\cdot x'$ is an open map, so $P_\lambda R_u(P_{-\lambda})\cdot x'$ contains an open neighbourhood of $x'$ in $G\cdot x'$.  Since $x'= \lim_{a\to 0} \lambda(a)\cdot x$ belongs to the closure of ${\rm Im}(\lambda)\cdot x$, there exists $a\in k^*$ such that $\lambda(a)\cdot x\in P_\lambda R_u(P_{-\lambda})\cdot x'$: say, $\lambda(a)\cdot x= gu\cdot x'$ for some $g\in P_\lambda$ and some $u\in R_u(P_{-\lambda})$.  This gives
 \begin{equation}
 \label{eqn:mainconjalgclsd}
  h\cdot x= u\cdot x',
 \end{equation}
where $h:= g^{-1}\lambda(a)\in P_\lambda$.  We have $\lim_{a\to 0} \lambda(a)\cdot (u\cdot x')= \lim_{a\to 0} \lambda(a)\cdot (h\cdot x)= c_\lambda(h)\cdot x'$ by Lemma~\ref{lem:lim_P_dot_x}: so $\lim_{a\to 0} \lambda(a)\cdot (u\cdot x')$ exists even though $\lim_{a\to 0} \lambda(a)u\lambda(a)^{-1}$ does not.
 
 Now $\lambda$ and $-\lambda$ fix $x'$, so by Lemma~\ref{lem:wts_increase} (applied to $-\lambda$), ${\rm supp}_T(u\cdot x'- x')$ consists of weights $\chi$ satisfying $\langle -\lambda, \chi\rangle> 0$.  Hence $\langle \lambda, \chi\rangle= -\langle -\lambda, \chi\rangle< 0$ for all $\chi\in {\rm supp}_T(u\cdot x'- x')$.  But $\lim_{a\to 0} \lambda(a)\cdot (u\cdot x')$ exists, so $\lim_{a\to 0} \lambda(a)\cdot (x'- u\cdot x')$ exists.  This forces ${\rm supp}_T(u\cdot x'- x')$ to be empty, which means that $u\cdot x'- x'= 0$.  So $u\cdot x'= x'$ and $x'= h\cdot x\in P_\lambda\cdot x$, as required.
 
 To finish, we prove that $x'\in R_u(P_\lambda)\cdot x$.  Write $h= lv$ with $l\in L_\lambda$ and $v\in R_u(P_\lambda)$.  Then $x'= lv\cdot x$, so $l^{-1}\cdot x'= v\cdot x$.  Taking $\lim_{a\to 0}$ of both sides and applying Lemma~\ref{lem:lim_P_dot_x} gives $l^{-1}\cdot x'= x'$.  So $x'= v\cdot x$, and we are done.
\end{proof}

\noindent {\bf Open Problem:} All of the above geometric invariant theory makes sense for a $G$-variety $X$ defined over an arbitrary field.  Does Theorem~\ref{thm:mainconjalgclsd} hold over an arbitrary field?  The proof above, which is taken from \cite[Thm.\ 3.3]{BMRT_git}, also works if the ground field is perfect.

\begin{cor}
\label{cor:still_in_orbit}
 Let $X$ be a $G$-variety, let $x\in X$ and let $\lambda\in Y(G)$.  Suppose $x':= \lim_{a\to 0} \lambda(a)\cdot x$ exists.  Then $x'$ belongs to $G\cdot x$ if and only if there exists $u\in R_u(P_\lambda)$ such that $\lambda$ fixes $u\cdot x$.  In this case, $x'= u\cdot x$.
\end{cor}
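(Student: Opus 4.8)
The plan is to deduce both implications from Theorem~\ref{thm:mainconjalgclsd} and Lemma~\ref{lem:lim_P_dot_x}, together with the observation recorded at the end of Example~\ref{ex:module_lim} that a limit of the form $\lim_{a\to 0}\lambda(a)\cdot y$, whenever it exists, is automatically fixed by ${\rm Im}(\lambda)$ (this holds for points of an arbitrary $G$-variety, since $X$ embeds $G$-equivariantly in a $G$-module).

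First I would treat the forward implication. Suppose $x'\in G\cdot x$. Then Theorem~\ref{thm:mainconjalgclsd} applies directly and yields $x'\in R_u(P_\lambda)\cdot x$, that is, $x'= u\cdot x$ for some $u\in R_u(P_\lambda)$; this already gives the final assertion ``$x'= u\cdot x$''. Since $x'$ is by hypothesis the limit $\lim_{a\to 0}\lambda(a)\cdot x$, it is fixed by ${\rm Im}(\lambda)$, and hence $\lambda$ fixes $u\cdot x = x'$, as required.

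For the converse I would argue as follows. Suppose there is $u\in R_u(P_\lambda)$ such that $\lambda$ fixes $u\cdot x$. On the one hand, since ${\rm Im}(\lambda)$ fixes $u\cdot x$, the limit $\lim_{a\to 0}\lambda(a)\cdot(u\cdot x)$ exists and equals $u\cdot x$ (Example~\ref{ex:module_lim} again). On the other hand, $u\in R_u(P_\lambda)\subseteq P_\lambda$ and $x'=\lim_{a\to 0}\lambda(a)\cdot x$ exists, so Lemma~\ref{lem:lim_P_dot_x} gives $\lim_{a\to 0}\lambda(a)\cdot(u\cdot x) = c_\lambda(u)\cdot x'$. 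But $c_\lambda(u)=1$ by the identification $R_u(P_\lambda)=\{g\in P_\lambda\mid c_\lambda(g)=1\}$, so this limit is $x'$. Comparing the two computations yields $x'= u\cdot x\in G\cdot x$.

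There is no serious obstacle here: essentially all the content has been packaged into Theorem~\ref{thm:mainconjalgclsd}, whose proof is the genuinely hard input. The only points that require a moment's care are recalling that an existing limit $\lim_{a\to 0}\lambda(a)\cdot y$ is ${\rm Im}(\lambda)$-fixed, and exploiting $R_u(P_\lambda)=\ker(c_\lambda)$ to cancel the factor $c_\lambda(u)$ when invoking Lemma~\ref{lem:lim_P_dot_x}.
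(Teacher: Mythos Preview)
Your proof is correct and follows essentially the same route as the paper's: both directions invoke Theorem~\ref{thm:mainconjalgclsd} for the forward implication and compute $\lim_{a\to 0}\lambda(a)\cdot(u\cdot x)$ two ways for the converse, using Lemma~\ref{lem:lim_P_dot_x} with $c_\lambda(u)=1$. You are slightly more explicit than the paper in spelling out the use of $R_u(P_\lambda)=\ker(c_\lambda)$, but the argument is the same.
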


\begin{proof}
 Suppose $x'\in G\cdot x$.  Then $x'= u\cdot x$ for some $u\in R_u(P_\lambda)$, by Theorem~\ref{thm:mainconjalgclsd}, and $\lambda$ fixes $u\cdot x= x'$.  Conversely, suppose $u\in R_u(P_\lambda)$ and $\lambda$ fixes $u\cdot x$.  Taking the limit, we get $\lim_{a\to 0} \lambda(a)\cdot (u\cdot x)= x'$ by our usual argument.  But $\lambda$ fixes $u\cdot x$, so this limit is equal to $u\cdot x$.  Hence $x'= u\cdot x$, and we are done.
\end{proof}

\section{A geometric criterion for $G$-complete reducibility}
\label{sec:Gcr_GIT}

Let $m\in {\mathbb N}$.  The group $G$ acts on the variety $G^m$ by {\em simultaneous conjugation}:
$$ g\cdot (g_1,\ldots, g_m)= (gg_1g^{-1}, \ldots, gg_mg^{-1}) $$
for $g\in G$ and $(g_1,\ldots, g_m)\in G^m$.  We call the orbits of this action {\em conjugacy classes}.  Given ${\mathbf h}= (h_1,\ldots, h_m)\in G^m$, we define ${\mathcal G}({\mathbf h})$ to be the closure of the abstract group generated by $h_1,\ldots, h_m$.  We say that a subgroup $H$ of $G$ is {\em topologically finitely generated} if $H= {\mathcal G}({\mathbf h})$ for some $m\in {\mathbb N}$ and some ${\mathbf h}\in G^m$.

If $H= {\mathcal G}({\mathbf h})$ then $gHg^{-1}= {\mathcal G}(g\cdot {\mathbf h})$.  Richardson's fundamental insight is that one can study subgroups of $G$ up to $G$-conjugacy by studying conjugacy classes of tuples from $G^m$.  Since $G^m$ is a $G$-variety, this allows us to apply ideas from geometric invariant theory.  In particular, we can give a geometric criterion for subgroups of $G$ to be $G$-completely reducible.

\begin{thm}
\label{thm:Gcr_crit}
 Let ${\mathbf h}\in G^m$ and let $H= {\mathcal G}({\mathbf h})$.  Then $H$ is $G$-cr if and only if the conjugacy class $G\cdot {\mathbf h}$ is closed.
\end{thm}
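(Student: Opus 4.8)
The plan is to read off both conditions from the behaviour of $\mathbf{h}$ under limits along cocharacters, working inside the $G$-variety $X=G^m$ with the simultaneous conjugation action, and then to invoke Corollary~\ref{cor:still_in_orbit} and the Hilbert--Mumford Theorem. The first thing I would set up is the dictionary: for $\lambda\in Y(G)$, the limit $\mathbf{h}':=\lim_{a\to0}\lambda(a)\cdot\mathbf{h}$ exists if and only if $h_i\in P_\lambda$ for every $i$, equivalently $H\subseteq P_\lambda$ (using that $P_\lambda$ is closed, so it contains $\mathcal{G}(\mathbf{h})$ as soon as it contains the $h_i$); when the limit exists it equals $(c_\lambda(h_1),\dots,c_\lambda(h_m))$, and it is fixed by $\mathrm{Im}(\lambda)$ acting by simultaneous conjugation precisely when each $c_\lambda(h_i)$ lies in $L_\lambda=C_G(\mathrm{Im}(\lambda))$, that is, when $\mathcal{G}(\mathbf{h}')\subseteq L_\lambda$. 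I would also use that $\mathcal{G}$ is $G$-equivariant, $\mathcal{G}(g\cdot\mathbf{h})=g\,\mathcal{G}(\mathbf{h})\,g^{-1}$, and that a point of a $G$-variety equal to such a limit is fixed by $\mathrm{Im}(\lambda)$, as recorded in Example~\ref{ex:module_lim}.

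For the implication ``$G\cdot\mathbf{h}$ closed $\implies$ $H$ is $G$-cr'', I would take an arbitrary parabolic subgroup $P$ of $G$ containing $H$ and write $P=P_\lambda$ for some $\lambda\in Y(G)$. Since $H\subseteq P_\lambda$, the limit $\mathbf{h}'$ exists, and it lies in $\ovl{G\cdot\mathbf{h}}=G\cdot\mathbf{h}$ because that orbit is closed. By Corollary~\ref{cor:still_in_orbit} there is $u\in R_u(P_\lambda)$ with $\lambda$ fixing $u\cdot\mathbf{h}$; then each $uh_iu^{-1}$ lies in $L_\lambda$, so $uHu^{-1}=\mathcal{G}(u\cdot\mathbf{h})\subseteq L_\lambda$ (again using $L_\lambda$ closed), i.e.\ $H\subseteq u^{-1}L_\lambda u$, a Levi subgroup of $P=P_\lambda$. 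As $P$ was arbitrary, $H$ is $G$-cr.

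For the converse, I would apply the Hilbert--Mumford Theorem (Theorem~\ref{thm:HMT}) to $\mathbf{h}\in G^m$ to obtain $\lambda\in Y(G)$ with $\mathbf{h}':=\lim_{a\to0}\lambda(a)\cdot\mathbf{h}$ existing and $G\cdot\mathbf{h}'$ closed; existence of the limit gives $H\subseteq P_\lambda$. If $P_\lambda=G$ then $R_u(P_\lambda)=1$ and $\mathrm{Im}(\lambda)\subseteq Z(G)^0$, so $\lambda$ acts trivially and $\mathbf{h}'=\mathbf{h}$; thus $G\cdot\mathbf{h}$ is closed and we are done. If $P_\lambda$ is a proper parabolic, then since $H$ is $G$-cr and $H\subseteq P_\lambda$, the reformulation after Definition~\ref{defn:Gcr} gives $u\in R_u(P_\lambda)$ with $uHu^{-1}\subseteq L_\lambda$; hence $\lambda$ fixes $u\cdot\mathbf{h}$, and Corollary~\ref{cor:still_in_orbit} yields $\mathbf{h}'\in G\cdot\mathbf{h}$, so $G\cdot\mathbf{h}=G\cdot\mathbf{h}'$ is closed.

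I do not expect any step to be a genuine obstacle, since the substantive input---Theorem~\ref{thm:mainconjalgclsd} and its Corollary~\ref{cor:still_in_orbit}, together with the Hilbert--Mumford Theorem---is already available, and the argument is essentially a translation exercise. The points that need care are: verifying the limit dictionary above (from Remark~\ref{rem:limit_props}(b), the description of $P_\lambda$, $L_\lambda$ and $c_\lambda$ in terms of $\lambda$, and the fixed-point characterisation of limits in Example~\ref{ex:module_lim}); the equivariance of $\mathcal{G}$ combined with the closedness of $L_\lambda$, which is what upgrades ``$uh_iu^{-1}\in L_\lambda$ for all $i$'' to ``$uHu^{-1}\subseteq L_\lambda$''; and the separate, easy treatment of the degenerate case $P_\lambda=G$ in the converse direction.
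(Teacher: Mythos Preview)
Your proposal is correct and follows essentially the same approach as the paper's proof: both directions hinge on the dictionary between limits along cocharacters and containment in $P_\lambda$/$L_\lambda$, together with the Hilbert--Mumford Theorem and Corollary~\ref{cor:still_in_orbit} (equivalently Theorem~\ref{thm:mainconjalgclsd}). The only cosmetic differences are that the paper reverses the order of the two implications, invokes the Hilbert--Mumford criterion from Remark~\ref{rem:HMT_discussion} (``for every $\lambda$ with $\mathbf{h}'$ existing, $\mathbf{h}'\in G\cdot\mathbf{h}$'') rather than the existence form you use, and does not separate out the degenerate case $P_\lambda=G$, since the argument via $u\in R_u(P_\lambda)$ covers it uniformly with $u=1$.
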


\begin{proof}
 Write ${\mathbf h}= (h_1,\ldots, h_m)$.  Suppose $H$ is $G$-cr.  We prove that $G\cdot {\mathbf h}$ is closed.  Let $\lambda\in Y(G)$ such that ${\mathbf h}':= \lim_{a\to 0} \lambda(a)\cdot {\mathbf h}$ exists.  It is enough by the Hilbert-Mumford Theorem to show that ${\mathbf h}'\in G\cdot {\mathbf h}$.  Now $\lim_{a\to 0} (\lambda(a)h_1\lambda(a)^{-1},\ldots, \lambda(a)h_m\lambda(a)^{-1})$ exists, so $\lim_{a\to 0} \lambda(a)h_i\lambda(a)^{-1}$ exists for each $1\leq i\leq  m$ by Remark~\ref{rem:limit_props}(b).  Hence $h_i\in P_\lambda$ for each $1\leq i\leq m$, which means that $H\leq P_\lambda$.  As $H$ is $G$-cr, there exists $u\in R_u(P_\lambda)$ such that $uHu^{-1}\leq L_\lambda$.  This implies that $uh_iu^{-1}\in L_\lambda= C_G({\rm Im}(\lambda))$ for each $i$, so $\lambda$ fixes $u\cdot {\mathbf h}$, so ${\mathbf h}'= u\cdot {\mathbf h}\in G\cdot {\mathbf h}$, by Corollary~\ref{cor:still_in_orbit}.
 
 Conversely, suppose $G\cdot {\mathbf h}$ is closed.  Let $P$ be a parabolic subgroup of $H$ such that $H\leq P$.  Then $P= P_\lambda$ for some $\lambda\in Y(G)$, so $\lim_{a\to 0} \lambda(a)h_i\lambda(a)^{-1}$ exists for each $1\leq i\leq m$.  This implies that ${\mathbf h}':= \lim_{a\to 0} \lambda(a)\cdot {\mathbf h}$ exists and belongs to $L_\lambda^m$ by Remark~\ref{rem:limit_props}(b).  Now $G\cdot {\mathbf h}$ is closed, so ${\mathbf h}'$ belongs to $G\cdot {\mathbf h}$.  By Theorem~\ref{thm:mainconjalgclsd}, ${\mathbf h}'= u\cdot {\mathbf h}$ for some $u\in R_u(P_\lambda)$.  So ${\mathbf h}= u^{-1}\cdot {\mathbf h}'\in (u^{-1}L_\lambda u)^m$.  But this means that $H\leq u^{-1}L_\lambda u$, a Levi subgroup of $P_\lambda$.  We conclude that $H$ is $G$-cr, as required.
\end{proof}

\begin{rem}
 A historical note: Richardson defined the notion of a {\em strongly reductive} subgroup of $G$ \cite[Defn.\ 16.1]{rich4}, and proved that $H= {\mathcal G}({\mathbf h})$ is strongly reductive if and only if $G\cdot {\mathbf h}$ is closed \cite[Thm.\ 16.4]{rich4}.  Bate-Martin-R\"ohrle showed that a subgroup $H$ of $G$ is strongly reductive if and only if it is $G$-cr \cite[Thm.\ 3.1]{BMR}; their proof involved manipulations of parabolic and Levi subgroups, and did not go via the more general result Theorem~\ref{thm:mainconjalgclsd}.
\end{rem}

\begin{rem}
\label{rem:Ru_conj}
 The argument shows that if $H= {\mathcal G}({\mathbf h})$ is contained in  $P_\lambda$ and $H$ is $G$-conjugate to the subgroup $c_\lambda(H)$ of $L_\lambda$ then $H$ is $R_u(P)$-conjugate to $c_\lambda(H)$: cf.\ the discussion in Example~\ref{ex:DLn_SLn_crit}.
\end{rem}

\begin{rem}
\label{rem:topfg}
 We cannot {\em a priori} apply Theorem~\ref{thm:Gcr_crit} to an arbitrary subgroup of $G$, since not every subgroup of $G$ is topologically finitely generated.  For instance, if $k$ is the algebraic closure of ${\mathbb F}_p$ then any topologically finitely generated subgroup $H= {\mathcal G}((h_1,\ldots, h_m))$ is finite.  To see this, choose an embedding $i\colon G\ra {\rm GL}_n(k)$; then $i(h_1),\ldots, i(h_m)$ belong to ${\rm GL}_n({\mathbb F}_q)$ for some sufficiently large power $q$ of $p$, so they generate a finite subgroup.  On the other hand, if $k$ is transcendental over ${\mathbb F}_p$ then any reductive subgroup of $G$ is topologically finitely generated \cite[Lem.\ 9.2]{martin}.

 In practice, however, this annoying technicality does not cause us any serious problems.  One can show that any subgroup $H$ of $G$ contains a topologically finitely generated subgroup $H'$ such that $H'$ is contained in exactly the same parabolic subgroups and Levi subgroups as $H$ is; so as far as $G$-complete reducibility is concerned, $H$ and $H'$ behave in the same way (\cite[Lem.\ 2.10]{BMR}; see also \cite[Defn.\ 5.4]{BMRT_git}).  For other ways to deal with this problem, see \cite[Rem.\ 2.9]{BMR} and the discussion of uniform $S$-instability at the start of Section~\ref{sec:opt} below.  We will gloss over this subtlety and assume below that all the subgroups of $G$ we deal with are topologically finitely generated.
\end{rem}

Now and in the next section we will derive some consequences of Theorem~\ref{thm:Gcr_crit}.  Here is our first: if $H= {\mathcal G}({\mathbf h})\leq G$ is $G$-cr then $C_G(H)$ is reductive.  For $G\cdot {\mathbf h}$ is closed by Theorem~\ref{thm:Gcr_crit}, so the stabiliser $G_{\mathbf h}$ is reductive by a standard result from GIT \cite[Thm.\ A]{rich}; but it is clear that $G_{\mathbf h}= C_G(H)$.  A slight refinement of this argument \cite[Prop.\ 3.12]{BMR} shows that $N_G(H)$ is also reductive.  Later we show that $C_G(H)$ and $N_G(H)$ are actually $G$-cr.

To state our next results, we need the notions of a {\em separable subgroup} of $G$ and a {\em reductive pair}.  If $H\leq G$ then we denote by ${\mathfrak c}_{\mathfrak g}(H)$ the centraliser of $H$ in ${\mathfrak g}$ (that is, the fixed point set of $H$ in ${\mathfrak g}$ with respect to the adjoint action).  It is immediate that ${\rm Lie}(C_G(H))\subseteq {\mathfrak c}_{\mathfrak g}(H)$.  If equality holds, we say that $H$ is {\em separable} in $G$.  The reason for the terminology is that if $H= {\mathcal G}({\mathbf h})$ then $H$ is separable if and only if the orbit map $\kappa_{\mathbf h}\colon G\ra G\cdot {\mathbf h}$ is separable.  (Here is another equivalent condition: $H$ is separable if and only if the scheme-theoretic centraliser of $H$ in $G$ is smooth.)  By a result of Herpel \cite[Thm.\ 1.1]{herpel}, if $p$ is large enough---$p$ very good for $G$ will do---then every subgroup of $G$ is separable.  Any subgroup $H$ of ${\rm GL}_n(k)$ is separable.  To see this, recall that we can identify $\mathfrak{gl}_n(k)$ with $M_n(k)$, so the centraliser of $H$ in $\mathfrak{gl}_n(k)$ is the subalgebra $C:= \{A\in M_n(k)\mid Ah= hA\ \mbox{for all $h\in H$}\}$.  We have $C_{{\rm GL}_n(k)}(H)= C\cap {\rm GL}_n(k)$, so $C_{{\rm GL}_n(k)}(H)$ is an open subset of $C$ and therefore has the same dimension as $C$.

Let $M$ be a connected reductive subgroup of $G$.  We call $(G,M)$ a {\em reductive pair} if the $M$-stable subspace ${\mathfrak m}= {\rm Lie}(M)$ of ${\mathfrak g}$ has an $M$-stable complement.

\begin{prop}[{\cite[Thm.\ 3.35]{BMR}}]
\label{prop:sep_red_pair}
 Let $M$ be a connected reductive subgroup of $G$ such that $(G,M)$ is a reductive pair.  Let $H$ be a subgroup of $M$.  If $H$ is $G$-cr and separable in $G$ then $H$ is $M$-cr and separable in $M$.
\end{prop}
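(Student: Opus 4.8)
The plan is to reduce everything to the orbit picture via Theorem~\ref{thm:Gcr_crit}. Write $H={\mathcal G}({\mathbf h})$ with ${\mathbf h}=(h_1,\dots,h_m)\in M^m$; the generators can be taken in $M$ because $H\leq M$, and by Remark~\ref{rem:topfg} there is no loss in assuming $H$ topologically finitely generated. Since $H$ is $G$-cr, the orbit $G\cdot{\mathbf h}$ is closed in $G^m$ by Theorem~\ref{thm:Gcr_crit}; since $H$ is separable in $G$, the orbit map $\kappa_{\mathbf h}\colon G\to G\cdot{\mathbf h}$ is separable, so $T_{\mathbf h}(G\cdot{\mathbf h})=d_e\kappa_{\mathbf h}({\mathfrak g})$. (For ${\mathbf y}\in G^m$ I abbreviate the image under $d_e\kappa_{\mathbf y}$ of a subspace of ${\mathfrak g}$ by writing, e.g., ${\mathfrak m}\cdot{\mathbf y}:=d_e\kappa_{\mathbf y}({\mathfrak m})$.) By Theorem~\ref{thm:Gcr_crit} applied to the connected reductive group $M$, and since being separable in $M$ is the same as the orbit map $M\to M\cdot{\mathbf h}$ being separable, it suffices to prove: (i) $M\cdot{\mathbf h}$ is closed in $M^m$; and (ii) $\kappa_{\mathbf h}\colon M\to M\cdot{\mathbf h}$ is separable.

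The reductive-pair hypothesis enters through some linear algebra with tangent spaces. Fix an $M$-stable complement ${\mathfrak n}$ to ${\mathfrak m}$ in ${\mathfrak g}$, and for ${\mathbf y}\in M^m$ set $H_{\mathbf y}:={\mathcal G}({\mathbf y})$. The kernel of $X\mapsto d_e\kappa_{\mathbf y}(X)$ on ${\mathfrak g}$ is exactly ${\mathfrak c}_{\mathfrak g}(H_{\mathbf y})$; since ${\mathfrak g}={\mathfrak m}\oplus{\mathfrak n}$ is a decomposition of $H_{\mathbf y}$-modules we get ${\mathfrak c}_{\mathfrak g}(H_{\mathbf y})={\mathfrak c}_{\mathfrak m}(H_{\mathbf y})\oplus{\mathfrak n}^{H_{\mathbf y}}$, and a dimension count then forces the sum ${\mathfrak g}\cdot{\mathbf y}={\mathfrak m}\cdot{\mathbf y}+{\mathfrak n}\cdot{\mathbf y}$ to be direct. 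Moreover, computing in $T_{\mathbf y}(G^m)=\bigoplus_i T_{y_i}G$ and using that ${\rm Ad}(y_i)$ preserves both ${\mathfrak m}$ and ${\mathfrak n}$, one checks that for $X\in{\mathfrak n}$ the vector $d_e\kappa_{\mathbf y}(X)$ lies in $T_{\mathbf y}(M^m)$ if and only if $X\in{\mathfrak n}^{H_{\mathbf y}}$; hence ${\mathfrak n}\cdot{\mathbf y}$ injects into the normal space $T_{\mathbf y}(G^m)/T_{\mathbf y}(M^m)$, i.e.\ ${\mathfrak n}\cdot{\mathbf y}\cap T_{\mathbf y}(M^m)=0$.

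These two facts give (ii) in the following strengthened form: \emph{any} subgroup $H'$ of $M$ that is separable in $G$ is separable in $M$. Indeed, writing $H'={\mathcal G}({\mathbf h}')$ with ${\mathbf h}'\in M^m$, the subspace $W:=T_{{\mathbf h}'}(M\cdot{\mathbf h}')$ contains ${\mathfrak m}\cdot{\mathbf h}'$ (the image of the differential at $e$ of the $M$-orbit map), is contained in $T_{{\mathbf h}'}(M^m)$, and is contained in $T_{{\mathbf h}'}(G\cdot{\mathbf h}')={\mathfrak g}\cdot{\mathbf h}'={\mathfrak m}\cdot{\mathbf h}'\oplus{\mathfrak n}\cdot{\mathbf h}'$, where separability of $H'$ in $G$ is used for the first equality; since $W\cap{\mathfrak n}\cdot{\mathbf h}'=0$ this forces $W={\mathfrak m}\cdot{\mathbf h}'$, which is exactly the statement that $\kappa_{{\mathbf h}'}\colon M\to M\cdot{\mathbf h}'$ is separable. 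In particular (ii) holds for $H$, and, applying this to $H$ itself and to any of its $G$-conjugates that happen to lie in $M$, we get $\dim M\cdot{\mathbf y}=\dim{\mathfrak m}\cdot{\mathbf y}$ for all such points.

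For (i) I would argue by contradiction with Hilbert--Mumford. If $M\cdot{\mathbf h}$ is not closed, then by the closed-orbit criterion of Remark~\ref{rem:HMT_discussion} applied to $M$ there is $\lambda\in Y(M)$ with ${\mathbf h}^*:=\lim_{a\to 0}\lambda(a)\cdot{\mathbf h}$ existing but ${\mathbf h}^*\notin M\cdot{\mathbf h}$; note ${\mathbf h}^*\in M^m$ since $M^m$ is closed in $G^m$. Viewing $\lambda$ in $Y(G)$ and using that $G\cdot{\mathbf h}$ is closed, ${\mathbf h}^*\in\ovl{G\cdot{\mathbf h}}=G\cdot{\mathbf h}$, say ${\mathbf h}^*=g\cdot{\mathbf h}$, so $H^*:={\mathcal G}({\mathbf h}^*)=gHg^{-1}$ is separable in $G$; by the previous paragraph $H$ and $H^*$ are separable in $M$, hence $\dim M\cdot{\mathbf h}=\dim{\mathfrak m}\cdot{\mathbf h}$ and $\dim M\cdot{\mathbf h}^*=\dim{\mathfrak m}\cdot{\mathbf h}^*$. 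Using the direct-sum decomposition ${\mathfrak g}\cdot{\mathbf y}={\mathfrak m}\cdot{\mathbf y}\oplus{\mathfrak n}\cdot{\mathbf y}$ together with $\dim{\mathfrak g}\cdot{\mathbf h}^*=\dim{\mathfrak g}\cdot{\mathbf h}$ (the two points lie in the same $G$-orbit), one finds $\dim{\mathfrak m}\cdot{\mathbf h}^*-\dim{\mathfrak m}\cdot{\mathbf h}=\dim{\mathfrak n}^{H^*}-\dim{\mathfrak n}^{H}$; and since ${\mathbf y}\mapsto\dim{\mathfrak n}^{{\mathcal G}({\mathbf y})}$ is upper semicontinuous on $\ovl{M\cdot{\mathbf h}}$ and constant on the dense orbit $M\cdot{\mathbf h}$ (because ${\mathfrak n}$ is $M$-stable), $\dim{\mathfrak n}^{H^*}\geq\dim{\mathfrak n}^{H}$. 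Therefore $\dim M\cdot{\mathbf h}^*\geq\dim M\cdot{\mathbf h}$, while $M\cdot{\mathbf h}^*\subseteq\ovl{M\cdot{\mathbf h}}$ gives the opposite inequality, so the two orbits have the same dimension; as $\ovl{M\cdot{\mathbf h}}$ is irreducible they coincide, contradicting ${\mathbf h}^*\notin M\cdot{\mathbf h}$. Hence $M\cdot{\mathbf h}$ is closed, $H$ is $M$-cr by Theorem~\ref{thm:Gcr_crit}, and together with (ii) this proves the proposition. I expect the main obstacle to be the second step above --- extracting the transversality ${\mathfrak n}\cdot{\mathbf y}\cap T_{\mathbf y}(M^m)=0$ and the splitting ${\mathfrak g}\cdot{\mathbf y}={\mathfrak m}\cdot{\mathbf y}\oplus{\mathfrak n}\cdot{\mathbf y}$ from the reductive-pair hypothesis and matching them with separability in $G$; once this tangent-space bookkeeping is secure, (ii) is a subspace squeeze and (i) a soft semicontinuity argument.
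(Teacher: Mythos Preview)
Your proof is correct, and the tangent-space bookkeeping you flag as the obstacle is exactly the heart of Richardson's argument, which is also what the paper uses. In particular, your derivation of the direct sum ${\mathfrak g}\cdot{\mathbf y}={\mathfrak m}\cdot{\mathbf y}\oplus{\mathfrak n}\cdot{\mathbf y}$ and the transversality ${\mathfrak n}\cdot{\mathbf y}\cap T_{\mathbf y}(M^m)=0$ from the $M$-stable splitting ${\mathfrak g}={\mathfrak m}\oplus{\mathfrak n}$, and your subspace squeeze to get separability in $M$, are essentially the content of what the paper's sketch compresses into ``it follows from the derivative criterion for separability that $H$ is separable in $M$''.

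Where you diverge from the paper is in the endgame for closedness. The paper works inside the closed subvariety $Z:=G\cdot{\mathbf h}\cap M^m$ and argues (using the tangent-space identity $T_{\mathbf y}(Z)=T_{\mathbf y}(M\cdot{\mathbf y})$ for every ${\mathbf y}\in Z$, which comes from the same reductive-pair computation) that every $M$-orbit in $Z$ is open, hence closed; so $M\cdot{\mathbf h}$ is closed without ever invoking Hilbert--Mumford or semicontinuity. Your route instead takes a putative boundary point ${\mathbf h}^*$ via Hilbert--Mumford and runs a dimension comparison using upper semicontinuity of ${\mathbf y}\mapsto\dim{\mathfrak n}^{{\mathcal G}({\mathbf y})}$. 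Both arguments are valid; the paper's is a little cleaner (one closed ambient variety, one tangent calculation, done), while yours isolates the semicontinuity phenomenon explicitly and makes transparent exactly where the $M$-stability of ${\mathfrak n}$ is doing work. Note also that the paper's version yields, as a by-product recorded in the subsequent remark, that $Z$ is a \emph{finite} union of $M$-orbits; your argument as written does not immediately give this.
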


\begin{proof}
 (Sketch:) The proof uses a beautiful geometric argument due to Richardson.  We assume $H$ is topologically finitely generated: say, $H= {\mathcal G}(h_1,\ldots, h_m)$.  Since $H$ is $G$-cr, $G\cdot (h_1,\ldots, h_m)$ is a closed subset of $G^m$ (Theorem~\ref{thm:Gcr_crit}).  Now consider $G\cdot (h_1,\ldots, h_m)\cap M^m$: this is closed and is a union of certain $M$-conjugacy classes, one of which is $M\cdot (h_1,\ldots, h_m)$.  To show $H$ is $M$-cr, it is enough by Theorem~\ref{thm:Gcr_crit} to prove that $M\cdot (h_1,\ldots, h_m)$ is closed.
 
 We do this by studying the tangent space to $G\cdot (h_1,\ldots, h_m)\cap M^m$ at the point $(h_1,\ldots, h_m)$.  Given $g\in G$, define $R_g\colon G\ra G$ by $R_g(g')= g'g$.  We may identify the tangent space $T_gG$ with $T_1G= {\mathfrak g}$ via the derivative $d_gR_{g^{-1}}$.  Hence we may identify $T_{(h_1,\ldots, h_m)}G^m$ with ${\mathfrak g}^m$.  By the separability assumption on $H$ and the derivative criterion for separability, the tangent space $T_{(h_1,\ldots, h_m)}(G\cdot (h_1,\ldots, h_m))$ is given by
 \begin{equation}
 \label{eqn:Gorbit}
  T_{(h_1,\ldots, h_m)}(G\cdot (h_1,\ldots, h_m))= \{(x- h_1\cdot x,\ldots, x- h_m\cdot x)\mid x\in {\mathfrak g}\}\subseteq {\mathfrak g}^m
 \end{equation}
 (where the $\cdot$ denotes the adjoint action of $G$ on ${\mathfrak g}$).  Likewise,
 \begin{equation}
 \label{eqn:Morbit}
  T_{(h_1,\ldots, h_m)}(M\cdot (h_1,\ldots, h_m))= \{(x- h_1\cdot x,\ldots, x- h_m\cdot x)\mid x\in {\mathfrak m}\}\subseteq {\mathfrak g}^m.
 \end{equation}
It follows from the derivative criterion for separability that $H$ is separable in $M$.  Now $G\cdot (h_1,\ldots, h_m)\cap M^m$ is a closed subvariety of $G^m$ as $G\cdot (h_1,\ldots, h_m)$ is closed, and Eqn.\ (\ref{eqn:Morbit}) shows that the orbits $M\cdot y$ for $y\in G\cdot (h_1,\ldots, h_m)\cap M^m$ all have the same dimension---namely, ${\rm dim}(M)- {\rm dim}(C_M(H))$. (Recall that the closure of an orbit is the union of that orbit with some orbits of smaller dimension.)  It follows that $M\cdot y$ is closed for every $y\in G\cdot (h_1,\ldots, h_m)\cap M^m$, as required.
\end{proof}

\begin{ex}[{\cite[Ex.\ 5.7]{BHMR_ss}}]
 Let $M= {\rm SL}_2(k)$ or ${\rm PGL}_2(k)$ and suppose $p= 2$.  Let $H= N_M(T)$, where $T$ is a maximal torus of $M$; it is straightforward to show that $H$ is not separable in $M$.  Proposition~\ref{prop:sep_red_pair} implies that there does not exist an embedding of $M$ in $G= {\rm GL}_n(k)$ for any $n$ such that $(G,M)$ is a reductive pair (recall that any subgroup of ${\rm GL}_n(k)$ is separable).
\end{ex}

\begin{rem}
 We note one further consequence of the arguments in the proof of Proposition~\ref{prop:sep_red_pair}.  A short calculation using our assumption that $(G,M)$ is a reductive pair, together with Eqns.\ (\ref{eqn:Gorbit}) and (\ref{eqn:Morbit}), shows that
$$ T_{(h_1,\ldots, h_m)}(G\cdot (h_1,\ldots, h_m)\cap M^m)   = T_{(h_1,\ldots, h_m)}(M\cdot (h_1,\ldots, h_m)), $$
which implies that $M\cdot (h_1,\ldots, h_m)$ is an open subset of $G\cdot (h_1,\ldots, h_m)\cap M^m$.  But we saw above that $M\cdot (h_1,\ldots, h_m)$ is also a closed subset of $G\cdot (h_1,\ldots, h_m)\cap M^m$, so $M\cdot (h_1,\ldots, h_m)$ must be a union of irreducible components of $G\cdot (h_1,\ldots, h_m)\cap M^m$.  It follows that $G\cdot (h_1,\ldots, h_m)\cap M^m$ is a {\bf finite} union of $M$-conjugacy classes.

This property can fail without the hypotheses of Proposition~\ref{prop:sep_red_pair}.  Let $G$ and $M$ be as in Example~\ref{ex:BMRT_TAMS}; then $(G,M)$ is a reductive pair.  Recall that there is a subgroup $H$ of $M$ such that $H$ is $G$-cr but not $M$-cr.  A related construction yields a pair $(m_1,m_2)\in M^2$ such that $G\cdot (m_1,m_2)\cap M^2$ is an infinite union of $M$-conjugacy classes.  Set $\widehat{H}:= {\mathcal G}(m_1,m_2)$.  We deduce from the discussion above that neither $H$ nor $\widehat{H}$ is separable in $G$; one can check this by explicit calculation \cite[Sec.\ 7]{BMRT}. 
\end{rem}

\begin{cor}[{\cite[Ex.\ 3.37]{BMR}}]
 Let $G$ be a simple group of exceptional type, and suppose $p$ is good for $G$.  Let $H$ be a subgroup of $G$ such that $H$ acts semisimply on ${\mathfrak g}$.  Then $H$ is $G$-cr.
\end{cor}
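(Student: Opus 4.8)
The plan is to realise $G$ as a closed subgroup of $\mathrm{GL}(\mathfrak g)$ via the adjoint representation and then apply Proposition~\ref{prop:sep_red_pair}. First I would reduce to the case that $G$ is of adjoint type. Since $G$ is of exceptional type and $p$ is good, the centre $Z(G)$ is an \'etale group scheme --- it is trivial in types $G_2$, $F_4$, $E_8$, and equals $\mu_3$, $\mu_2$ in types $E_6$, $E_7$ respectively, which are \'etale because $p\neq 2,3$ --- so the isogeny $\pi\colon G\ra G/Z(G)$ is separable and therefore $d\pi\colon\mathfrak g\ra\mathrm{Lie}(G/Z(G))$ is an isomorphism of $H$-modules. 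As $Z(G)$ acts trivially on $\mathfrak g$, the submodule lattices of $\mathfrak g$ as an $H$-module and as a $\pi(H)$-module coincide, so $\pi(H)$ acts semisimply on $\mathrm{Lie}(G/Z(G))$; and by the isogeny-invariance of $G$-complete reducibility noted after Proposition~\ref{prop:product}, $H$ is $G$-cr if and only if $\pi(H)$ is $(G/Z(G))$-cr. So from now on I assume $Z(G)=1$, in which case $\mathrm{Ad}\colon G\hookrightarrow\mathrm{GL}(\mathfrak g)$ is a closed embedding and $\mathrm{Lie}(G)=\mathrm{ad}(\mathfrak g)\subseteq\mathfrak{gl}(\mathfrak g)$.

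Next I would show that $(\mathrm{GL}(\mathfrak g),G)$ is a reductive pair. Equip $\mathfrak{gl}(\mathfrak g)=\mathrm{End}(\mathfrak g)$ with the trace form $(A,B)\mapsto\mathrm{tr}(AB)$: this is a non-degenerate, symmetric, $G$-invariant bilinear form, and its restriction to the subspace $\mathrm{ad}(\mathfrak g)=\mathrm{Lie}(G)$ is precisely the Killing form of $\mathfrak g$. Since $G$ is of exceptional type and $p$ is good, the Killing form of $\mathfrak g$ is non-degenerate (the discriminant of the Killing form of a simple Lie algebra of exceptional type is divisible only by the bad primes of $G$), so $\mathrm{Lie}(G)$ is a non-degenerate subspace of $\mathfrak{gl}(\mathfrak g)$. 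Hence $\mathfrak{gl}(\mathfrak g)=\mathrm{Lie}(G)\oplus\mathrm{Lie}(G)^{\perp}$ as $G$-modules, and $\mathrm{Lie}(G)^{\perp}$ is the $G$-stable complement required by the definition of a reductive pair.

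To finish, note that the hypothesis that $H$ acts semisimply on $\mathfrak g$ says exactly that the inclusion $H\hookrightarrow\mathrm{GL}(\mathfrak g)$ is a completely reducible representation of $H$, so $H$ is $\mathrm{GL}(\mathfrak g)$-cr by the criterion $(*)$ of Example~\ref{ex:DLn_SLn_crit}; and $H$ is separable in $\mathrm{GL}(\mathfrak g)$ because every subgroup of a general linear group is separable. Applying Proposition~\ref{prop:sep_red_pair} with $\mathrm{GL}(\mathfrak g)$ in the role of the ambient reductive group and $G$ in the role of $M$ then shows that $H$ is $G$-cr (and, in fact, separable in $G$), as required.

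The step I expect to be the main obstacle is establishing that $(\mathrm{GL}(\mathfrak g),G)$ is a reductive pair: the whole argument hinges on producing a $G$-stable complement to $\mathrm{Lie}(G)$ inside $\mathfrak{gl}(\mathfrak g)$, and this is exactly the point at which the hypothesis that $p$ is good for the exceptional group $G$ enters, through the non-degeneracy of the Killing form. By comparison, the reduction to the adjoint case is a routine technicality, depending only on $Z(G)$ being \'etale in the good range of characteristics and on the isogeny-invariance of $G$-complete reducibility.
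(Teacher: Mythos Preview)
Your proof is correct and follows essentially the same approach as the paper's: both arguments verify that $(\mathrm{GL}(\mathfrak g),G)$ is a reductive pair by using the trace form on $\mathrm{End}(\mathfrak g)$ together with the non-degeneracy of the Killing form in good characteristic, and then invoke Proposition~\ref{prop:sep_red_pair}. The only difference is that you insert an explicit reduction to the adjoint case to guarantee that $\mathrm{Ad}$ is a closed embedding, whereas the paper simply writes ``the inclusion $H\leq G\leq \mathrm{GL}(\mathfrak g)$'' without further comment.
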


\begin{proof}
 We apply Proposition~\ref{prop:sep_red_pair} to the inclusion $H\leq G\leq {\rm GL}({\mathfrak g})$.  Define a symmetric bilinear form ${\mathfrak B}$ on ${\rm Lie}({\rm GL}({\mathfrak g}))= {\rm End}({\mathfrak g})$ by ${\mathfrak B}(X,Y)= {\rm trace}(XY)$.  It is easily checked that ${\mathfrak B}$ is nondegenerate and ${\rm GL}({\mathfrak g})$-invariant.  The hypothesis on $p$ implies that the restriction of ${\mathfrak B}$ to ${\mathfrak g}$ is a nonzero multiple of the Killing form on ${\mathfrak g}$.  Let ${\mathfrak d}$ be the orthogonal complement to ${\mathfrak g}$ in ${\rm End}({\mathfrak g})$ with respect to ${\mathfrak B}$: then ${\rm dim}({\mathfrak d})+ {\rm dim}({\mathfrak g})= {\rm dim}({\rm End}({\mathfrak g}))$ as ${\mathfrak B}$ is nondegenerate, and ${\mathfrak d}\cap {\mathfrak g}= 0$ as the Killing form on ${\mathfrak g}$ is nondegenerate.  It follows that ${\mathfrak d}$ is a $G$-stable complement to ${\mathfrak g}$ in ${\rm End}({\mathfrak g})$, so $({\rm GL}({\mathfrak g}), G)$ is a reductive pair.

 Now $H$ is ${\rm GL}({\mathfrak g})$-cr as $H$ acts semisimply on ${\mathfrak g}$.  Since any subgroup of ${\rm GL}({\mathfrak g})$ is separable, it follows from Proposition~\ref{prop:sep_red_pair} that $H$ is $G$-cr.
\end{proof}

\noindent There is a similar result for simple groups of classical type.

The next result shows we get the same outcome under slightly different hypotheses.

\begin{prop}[{\cite[Thm.\ 3.46]{BMR}}]
\label{prop:BMR_3.46}
 Suppose $H$ is a separable subgroup of $G$ and $H$ acts semisimply on ${\mathfrak g}$.  Then $H$ is $G$-cr.
\end{prop}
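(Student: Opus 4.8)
The plan is to use the geometric criterion of Theorem~\ref{thm:Gcr_crit} together with the Hilbert-Mumford machinery, mimicking the strategy of Proposition~\ref{prop:sep_red_pair} but now exploiting the separability of $H$ in the ambient group $G$ itself rather than in a reductive pair. Assume (via Remark~\ref{rem:topfg}) that $H = {\mathcal G}({\mathbf h})$ with ${\mathbf h} = (h_1,\ldots,h_m) \in G^m$. By Theorem~\ref{thm:Gcr_crit} it suffices to show the conjugacy class $G\cdot{\mathbf h}$ is closed, and by the Hilbert-Mumford criterion (Remark~\ref{rem:HMT_discussion}) this amounts to showing: whenever $\lambda \in Y(G)$ is such that ${\mathbf h}' := \lim_{a\to 0}\lambda(a)\cdot{\mathbf h}$ exists, then ${\mathbf h}' \in G\cdot{\mathbf h}$. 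As in the proof of Theorem~\ref{thm:Gcr_crit}, the existence of the limit forces each $h_i \in P_\lambda$, so $H \leq P_\lambda$; by Corollary~\ref{cor:still_in_orbit} it then suffices to produce $u \in R_u(P_\lambda)$ with $\lambda$ fixing $u\cdot{\mathbf h}$, equivalently $uHu^{-1} \leq L_\lambda = C_G({\rm Im}(\lambda))$.

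The key new input is the adjoint action. First I would observe that ${\mathbf h}' \in L_\lambda^m$ (by Remark~\ref{rem:limit_props}(b), since $c_\lambda(h_i) \in L_\lambda$), and that $c_\lambda\colon P_\lambda \to L_\lambda$ is a homomorphism which is the identity on $L_\lambda$; so $H' := c_\lambda(H) = {\mathcal G}({\mathbf h}')$ is a subgroup of $L_\lambda$. The point is to show $H'$ is $G$-conjugate (indeed $R_u(P_\lambda)$-conjugate) to $H$. Consider the element $\sigma := \lambda(a_0)$ for a well-chosen $a_0$, or better, argue via the derived subgroup: the idea is that because $H$ acts semisimply on ${\mathfrak g}$, the subspace ${\mathfrak g}^H = {\mathfrak c}_{\mathfrak g}(H)$ admits an $H$-stable complement in ${\mathfrak g}$, and separability of $H$ gives ${\rm Lie}(C_G(H)) = {\mathfrak c}_{\mathfrak g}(H)$, so $\dim C_G(H) = \dim {\mathfrak g}^H$. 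I would then run Richardson's tangent-space argument: the orbit map $\kappa_{\mathbf h}$ is separable (this is exactly the meaning of separability of $H$), so $T_{\mathbf h}(G\cdot{\mathbf h})$ is given by Eqn.~(\ref{eqn:Gorbit}), and similarly for the limit point ${\mathbf h}'$ provided $H'$ is separable in $G$. Semisimplicity of the $H$-action on ${\mathfrak g}$ should propagate to semisimplicity of the $H'$-action (conjugation by $\lambda$ within a torus normalising relevant data, or a semicontinuity/limit argument on the $\lambda$-eigenspace decomposition), whence $H'$ is separable in $G$ as well and $\dim C_G(H') = \dim {\mathfrak c}_{\mathfrak g}(H')$.

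With both orbits $G\cdot{\mathbf h}$ and $G\cdot{\mathbf h}'$ having the same dimension $\dim G - \dim {\mathfrak g}^H$ — here one uses that ${\mathfrak g}^H$ and ${\mathfrak g}^{H'}$ have the same dimension, since $H'$ is obtained from $H$ by a limit under a torus and the $\lambda$-weight decomposition of ${\mathfrak c}_{\mathfrak g}(\cdot)$ can only jump up in the limit, but semisimplicity rules out the jump — one concludes that ${\mathbf h}'$ cannot lie in the boundary $\overline{G\cdot{\mathbf h}}\setminus G\cdot{\mathbf h}$ of any orbit of strictly smaller dimension; more precisely, since ${\mathbf h}' \in \overline{G\cdot{\mathbf h}}$ and $\dim G\cdot{\mathbf h}' = \dim G\cdot{\mathbf h}$, the orbit $G\cdot{\mathbf h}'$ is open in $\overline{G\cdot{\mathbf h}}$, forcing $G\cdot{\mathbf h}' = G\cdot{\mathbf h}$ and hence ${\mathbf h}' \in G\cdot{\mathbf h}$. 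By Theorem~\ref{thm:mainconjalgclsd} we may then write ${\mathbf h}' = u\cdot{\mathbf h}$ with $u \in R_u(P_\lambda)$, so $\lambda$ fixes $u\cdot{\mathbf h}$ and we are done by Corollary~\ref{cor:still_in_orbit}.

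\emph{Main obstacle.} The delicate point is controlling the dimension of the centraliser — equivalently ${\mathfrak c}_{\mathfrak g}$ — in passing to the limit: a priori $\dim C_G(H')$ could exceed $\dim C_G(H)$, which would only say $\dim G\cdot{\mathbf h}' \leq \dim G\cdot{\mathbf h}$ and give nothing. The crux is therefore the claim that the semisimple $H$-action on ${\mathfrak g}$ together with separability rigidifies the centraliser dimension under $\lambda$-degeneration; showing that $H'$ still acts semisimply on ${\mathfrak g}$ (so that ${\mathfrak c}_{\mathfrak g}(H')$ sits inside a $\lambda$-stable picture with no extra weight-zero jump) is the technical heart of the argument, and is where one must be careful — possibly by embedding ${\mathfrak g}$ in a larger module and tracking supports of the relevant elements under $\lambda(a)$, exactly as in Example~\ref{ex:module_lim} and Lemma~\ref{lem:wts_increase}.
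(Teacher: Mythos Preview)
Your overall architecture is right --- argue by contradiction, pass to a limit ${\mathbf h}'$ along some $\lambda$, and compare centraliser dimensions --- but you are missing the one idea that makes the argument go through cleanly, and the ``main obstacle'' you flag is precisely the place where the paper does something different and much simpler.

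The paper does \emph{not} try to show that $H'$ acts semisimply on ${\mathfrak g}$, nor that $H'$ is separable in $G$. Instead it uses the semisimplicity hypothesis in a different way: since $H$ acts semisimply on ${\mathfrak g}$, the subgroup $H$ is ${\rm GL}({\mathfrak g})$-cr (Example~\ref{ex:DLn_SLn_crit}), so Theorem~\ref{thm:Gcr_crit} applied to ${\rm GL}({\mathfrak g})$ gives that the ${\rm GL}({\mathfrak g})$-orbit of ${\mathbf h}$ is closed. But ${\mathbf h}'$ lies in that closure, hence ${\mathbf h}'$ is ${\rm GL}({\mathfrak g})$-conjugate to ${\mathbf h}$, so $H$ and $H'$ are ${\rm GL}({\mathfrak g})$-conjugate. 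Now ${\mathfrak c}_{\mathfrak g}(H)$ and ${\mathfrak c}_{\mathfrak g}(H')$ are the fixed-point subspaces of $H$ and $H'$ on ${\mathfrak g}$, and ${\rm GL}({\mathfrak g})$-conjugate subgroups have fixed-point spaces of equal dimension; thus $\dim {\mathfrak c}_{\mathfrak g}(H) = \dim {\mathfrak c}_{\mathfrak g}(H')$ immediately. On the other hand, if ${\mathbf h}' \notin G\cdot{\mathbf h}$ then $\dim C_G(H') > \dim C_G(H)$, and separability of $H$ alone gives
\[
\dim {\mathfrak c}_{\mathfrak g}(H') \geq \dim C_G(H') > \dim C_G(H) = \dim {\mathfrak c}_{\mathfrak g}(H),
\]
a contradiction. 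So separability is used only once (for $H$, not $H'$), and the ``propagation of semisimplicity to the limit'' that you worried about is entirely bypassed by the ${\rm GL}({\mathfrak g})$-closed-orbit trick.
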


\begin{proof}
 To simplify notation, we assume that the adjoint representation of $G$ yields an embedding of $G$ in ${\rm GL}({\mathfrak g})$; then we can regard $H$ as a subgroup of ${\rm GL}({\mathfrak g})$.  We can assume that $H= {\mathcal G}({\mathbf h})$ for some tuple ${\mathbf h}$.  Suppose $H$ is not $G$-cr.  By Theorem~\ref{thm:Gcr_crit}, we can choose $\lambda\in Y(G)$ such that ${\mathbf h}':= \lim_{a\to 0} \lambda(a)\cdot {\mathbf h}$ exists and does not belong to $G\cdot {\mathbf h}$.  Set $H'= {\mathcal G}({\mathbf h}')$.  Since ${\mathbf h}'$ belongs to $\ovl{G\cdot {\mathbf h}}$ but not to $G\cdot {\mathbf h}$, we have ${\rm dim}(G\cdot {\mathbf h}')< {\rm dim}(G\cdot {\mathbf h})$, which implies that ${\rm dim}(G_{{\mathbf h}'})> {\rm dim}(G_{\mathbf h})$.  Now $G_{{\mathbf h}'}= C_G(H')$ and $G_{\mathbf h}= C_G(H)$, so we get ${\rm dim}(C_G(H'))> {\rm dim}(C_G(H))$.  As $H$ is separable in $G$, we deduce that
 \begin{equation}
 \label{eqn:dim_ineq}
  {\rm dim}({\mathfrak c}_{\mathfrak g}(H'))\geq {\rm dim}(C_G(H'))> {\rm dim}(C_G(H))= {\rm dim}({\mathfrak c}_{\mathfrak g}(H)).
 \end{equation}
 
 By hypothesis, $H$ is ${\rm GL}({\mathfrak g})$-cr.  It follows from Theorem~\ref{thm:Gcr_crit}---this time applied to ${\rm GL}({\mathfrak g})$---that ${\rm GL}({\mathfrak g})\cdot {\mathbf h}$ is closed, so ${\mathbf h'}$ is ${\rm GL}({\mathfrak g})$-conjugate to ${\mathbf h}$.  Hence $H$ and $H'$ are ${\rm GL}({\mathfrak g})$-conjugate.  Now ${\mathfrak c}_{\mathfrak g}(H)$ (resp., ${\mathfrak c}_{\mathfrak g}(H')$) is precisely the fixed-point space of $H$ (resp., $H'$) in ${\mathfrak g}$, so we must have ${\rm dim}({\mathfrak c}_{\mathfrak g}(H))= {\rm dim}({\mathfrak c}_{\mathfrak g}(H'))$.  But this contradicts (\ref{eqn:dim_ineq}).  We deduce that $H$ must be $G$-cr after all.
\end{proof}

\noindent {\bf Open Problem:} Does Proposition~\ref{prop:BMR_3.46} hold without the separability hypothesis on $H$?  See \cite[Sec.\ 4]{BMRT} for further discussion.

\section{Optimal destabilising parabolic subgroups}
\label{sec:opt}

The parabolic subgroup ${\mathcal P}(U)$ that we obtained from the Borel-Tits construction has special properties: it is a witness that $U$ is not $G$-cr and it contains $N_G(U)$.  In this section we establish the existence of parabolic subgroups with similar properties in a wider context.

\begin{thm}
\label{thm:opt}
 Let $H$ be a subgroup of $G$ such that $H$ is not $G$-cr.  Then there is a parabolic subgroup $P_{\rm opt}(H)$ of $G$ such that $P_{\rm opt}(H)$ is a witness that $H$ is not $G$-cr and $N_G(H)\leq P_{\rm opt}(H)$.
\end{thm}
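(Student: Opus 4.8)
The plan is to realise $H$ inside the geometric invariant theory picture and then invoke the Kempf--Rousseau--Hesselink theory of optimal destabilisation. First I would use Remark~\ref{rem:topfg} to reduce to the case that $H$ is topologically finitely generated, say $H= {\mathcal G}({\mathbf h})$ with ${\mathbf h}= (h_1,\ldots, h_m)\in G^m$. Since $H$ is not $G$-cr, Theorem~\ref{thm:Gcr_crit} says that the conjugacy class $G\cdot {\mathbf h}$ is not closed, so $Z:= \ovl{G\cdot {\mathbf h}}\setminus G\cdot {\mathbf h}$ is a nonempty, closed, $G$-stable subset of $G^m$ (closed since $G\cdot{\mathbf h}$ is open in its closure, $G$-stable since it is a union of orbits); thus ${\mathbf h}$ is unstable relative to $Z$. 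The goal is to pick, among the cocharacters that drive ${\mathbf h}$ into $Z$, a canonical ``most efficient'' one and take $P_{\rm opt}(H)$ to be the associated parabolic.

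Next I would study the set $\Omega$ of such cocharacters, namely $\lambda\in Y(G)$ with $\lim_{a\to 0}\lambda(a)\cdot{\mathbf h}$ existing and lying in $Z$. It is nonempty: by the Hilbert--Mumford Theorem (Theorem~\ref{thm:HMT}) there is $\lambda$ for which $\lim_{a\to 0}\lambda(a)\cdot{\mathbf h}$ exists with closed $G$-orbit, and since $G\cdot{\mathbf h}$ is not closed this limit cannot lie in $G\cdot{\mathbf h}$, hence lies in $Z$. For $\lambda\in\Omega$: each $\lim_{a\to 0}\lambda(a)h_i\lambda(a)^{-1}$ exists by Remark~\ref{rem:limit_props}(b), so $H\leq P_\lambda$; $P_\lambda$ is proper (else ${\rm Im}(\lambda)\subseteq Z(G)^0$ and the limit is ${\mathbf h}$, not in $Z$); and $P_\lambda$ is a witness that $H$ is not $G$-cr, since if $uHu^{-1}\leq L_\lambda$ for some $u\in R_u(P_\lambda)$ then $\lambda$ fixes $u\cdot{\mathbf h}$ and Corollary~\ref{cor:still_in_orbit} forces $\lim_{a\to 0}\lambda(a)\cdot{\mathbf h}= u\cdot{\mathbf h}\in G\cdot{\mathbf h}$, a contradiction. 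Conversely, running the same argument through Theorem~\ref{thm:mainconjalgclsd} shows that any $\lambda$ with $H\leq P_\lambda$ and $H$ not $R_u(P_\lambda)$-conjugate to a subgroup of $L_\lambda$ lies in $\Omega$. Hence $\Omega= \{\lambda\in Y(G)\mid H\leq P_\lambda,\ H\text{ not }R_u(P_\lambda)\text{-conjugate to a subgroup of }L_\lambda\}$, which manifestly depends only on $H$, and every $P_\lambda$ with $\lambda\in\Omega$ is already a witness; what remains is to single out one that contains $N_G(H)$.

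For that I would invoke the Kempf--Rousseau--Hesselink optimality theory (see \cite{BMR}). Fix a $G$-invariant norm $\|\cdot\|$ on $Y(G)$, obtained by choosing a positive-definite Weyl-group-invariant rational quadratic form on $Y(T)$ for one maximal torus $T$ and transporting it by $G$-conjugacy. One then measures the efficiency of $\lambda\in\Omega$ by a quantity assembled from $\|\lambda\|$ and the ``distance'' ${\mathbf h}$ travels under $\lambda$, normalised to be insensitive to replacing $\lambda$ by a positive multiple (recall $P_{n\lambda}= P_\lambda$). Kempf's minimisation/convexity argument on the cocharacter lattices (equivalently, on the spherical building $X(G)$) then produces a nonempty set $\Omega_{\rm opt}\subseteq\Omega$ of optimal cocharacters with the crucial properties that (i) $P_\lambda$ is one and the same proper parabolic $P_{\rm opt}(H)$ for every $\lambda\in\Omega_{\rm opt}$, (ii) the primitive members of $\Omega_{\rm opt}$ form a single $R_u(P_{\rm opt}(H))$-conjugacy class, and (iii) the whole construction is $G$-equivariant: $\Omega_{\rm opt}(g\cdot{\mathbf h})= g\cdot\Omega_{\rm opt}({\mathbf h})$ and $P_{\rm opt}(g\cdot{\mathbf h})= gP_{\rm opt}({\mathbf h})g^{-1}$ for all $g\in G$.

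To finish, set $P_{\rm opt}(H):= P_\lambda$ for any $\lambda\in\Omega_{\rm opt}$; by the second paragraph this is a proper parabolic containing $H$ that witnesses the failure of $G$-complete reducibility. For the normaliser, let $g\in N_G(H)$. Then $g\cdot{\mathbf h}$ is another generating tuple for $H$, with ${\mathcal G}(g\cdot{\mathbf h})= gHg^{-1}= H$; since $\Omega$---and, one checks, the efficiency function and hence $\Omega_{\rm opt}$---depends only on $H$, we have $\Omega_{\rm opt}(g\cdot{\mathbf h})= \Omega_{\rm opt}({\mathbf h})$, while equivariance (iii) gives $\Omega_{\rm opt}(g\cdot{\mathbf h})= g\cdot\Omega_{\rm opt}({\mathbf h})$. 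Hence $g\cdot\Omega_{\rm opt}({\mathbf h})= \Omega_{\rm opt}({\mathbf h})$, so $g$ permutes $\Omega_{\rm opt}({\mathbf h})$ and therefore, by (i), fixes $P_{\rm opt}(H)$; that is, $g\in N_G(P_{\rm opt}(H))= P_{\rm opt}(H)$. Thus $N_G(H)\leq P_{\rm opt}(H)$. The main obstacle is the Kempf--Rousseau--Hesselink optimality statement itself---constructing the invariant norm, showing the efficiency function attains its optimum over a lattice, and especially proving \emph{uniqueness} of the optimal parabolic (properties (i)--(iii)), which rests on a delicate convexity argument combined with the structure theory of parabolic subgroups; a secondary technical point is checking that $\Omega_{\rm opt}$ and the equivariance genuinely depend only on $H$ and not on the auxiliary tuple ${\mathbf h}$.
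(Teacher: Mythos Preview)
Your overall strategy---reduce to a generating tuple ${\mathbf h}$, apply Theorem~\ref{thm:Gcr_crit} to see that $G\cdot{\mathbf h}$ is not closed, then invoke Kempf--Rousseau--Hesselink optimality to produce $P_{\rm opt}({\mathbf h})$ and use equivariance---is exactly the route the paper takes. Your characterisation of $\Omega$ as $\{\lambda\in Y(G)\mid H\leq P_\lambda,\ H\text{ not }R_u(P_\lambda)\text{-conjugate into }L_\lambda\}$ is correct and is a genuinely nice observation: it shows cleanly that the \emph{set of destabilising cocharacters} depends only on $H$.

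The gap is in the clause ``and, one checks, the efficiency function and hence $\Omega_{\rm opt}$---depends only on $H$''. This is not a routine check; it is precisely the obstacle the paper flags. The numerical function $\mu_{\mathbf h}$ (and hence $f_{\mathbf h}$) is built from the support of ${\mathbf h}$ in a $G$-module embedding of $G^m$, and two different generating tuples for the same $H$---for instance ${\mathbf h}$ and $g\cdot{\mathbf h}$ with $g\in N_G(H)\setminus C_G(H)$---have different supports and hence, \emph{a priori}, different efficiency functions and different optimal classes. Your argument needs $\Omega_{\rm opt}(g\cdot{\mathbf h})=\Omega_{\rm opt}({\mathbf h})$, but all that equivariance gives you is $\Omega_{\rm opt}(g\cdot{\mathbf h})=g\cdot\Omega_{\rm opt}({\mathbf h})$; you cannot conclude these coincide without the unproved independence claim. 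So as written the proof only yields $C_G(H)=G_{\mathbf h}\leq P_{\rm opt}({\mathbf h})$, not $N_G(H)\leq P_{\rm opt}({\mathbf h})$.

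The paper's resolution is not to argue that the tuple-based $\Omega_{\rm opt}$ is independent of ${\mathbf h}$, but to change the optimisation problem: one uses Hesselink's \emph{uniform $S$-instability}, applying the optimality machinery to the entire $N_G(H)$-stable subset $H^m\subseteq G^m$ rather than to the single point ${\mathbf h}$. The resulting optimal class is then visibly $N_G(H)$-stable, and one gets $N_G(H)\leq P_{\rm opt}(H)$ directly. You correctly identify this issue at the very end, but you label it a ``secondary technical point''; in fact it is the crux of the argument, and ``one checks'' does not suffice.
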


\noindent In fact, $P_{\rm opt}(H)$ is stabilised by any automorphism of $G$ that stabilises $H$.

There may exist several parabolic subgroups of $G$ with the desired properties, but we have a particular construction in mind.  The ``opt'' subscript is short for ``optimal''---we find $P_{\rm opt}(H)$ by optimising a convex real-valued function on the space of cocharacters of $G$.  We call $P_{\rm opt}(H)$ the {\em optimal destabilising parabolic subgroup} for $H$.

Theorem~\ref{thm:opt} is a special case of a more general theorem from GIT which we call the Hilbert-Mumford-Kempf-Hesselink-Rousseau Theorem (cf.\ \cite{kempf}, \cite{hesselink}, \cite{rousseau}).  Given a $G$-variety $X$ and a point $x\in X$ such that $G\cdot x$ is not closed, we can construct an {\em optimal destabilising cocharacter} $\lambda_{\rm opt}(x)$ such that $x':= {\rm lim}_{a\to 0} \lambda_{\rm opt}(x)(a)\cdot x$ exists and $G\cdot x'$ is closed.  Then $x'\not\in G\cdot x$; roughly speaking, we can think of $\lambda_{\rm opt}(x)$ as the cocharacter that takes $x$ outside the orbit $G\cdot x$ and into the closed orbit $G\cdot x'$ ``as quickly as possible''.  The cocharacter $\lambda_{\rm opt(x)}$ is unique up to $R_u(P_{\lambda_{\rm opt}(x)})$-conjugacy (modulo a normalisation condition which we discuss below); hence the parabolic subgroup $P_{\rm opt}(x):= P_{\lambda_{\rm opt}(x)}$ is uniquely determined.  Moreover, the construction is natural in an appropriate sense: for any $g\in G$, $P_{\rm opt}(g\cdot x)= gP_{\rm opt}(x)g^{-1}$.  In particular, $G_x$ normalises $P_{\rm opt}(x)$, so $G_x\leq P_{\rm opt}(x)$.

Theorem~\ref{thm:opt} is a consequence of this construction.  For suppose $H\leq G$ is not $G$-cr.  We assume as usual that $H= {\mathcal G}({\mathbf h})$ for some tuple ${\mathbf h}\in G^m$.  By Theorem~\ref{thm:Gcr_crit}, $G\cdot {\mathbf h}$ is not closed.  We can associate to ${\mathbf h}$ the optimal destabilising parabolic subgroup $P_{\rm opt}({\mathbf h})$ from the Hilbert-Mumford-Kempf-Hesselink-Rousseau Theorem, and we set $P_{\rm opt}(H):= P_{\rm opt}({\mathbf h})$.  Then $G_{\mathbf h}= C_G(H)$ is contained in $P_{\rm opt}(H)$.

There is one problem: we do not know whether $P_{\rm opt}(H)$ is dependent on the choice of ${\mathbf h}$.  In particular, if $g\in G$ normalises $H$ but does not centralise $H$ then conjugation by $g$ takes the generating tuple ${\mathbf h}$ to a different generating tuple, so we cannot conclude {\em a priori} that $g$ normalises $P_{\rm opt}(H)$.  One can overcome this difficulty using Hesselink's notion of ``uniform $S$-instability": rather than applying a cocharacter $\lambda$ to the single point ${\mathbf h}$, we apply it to the entire set $H^m$.  One can construct an optimal destabilising cocharacter and parabolic subgroup as before; if $g\in G$ normalises $H$ then $g$ stabilises $H^m$, and it follows that $g$ normalises $P_{\rm opt}(H)$.  This gives $N_G(H)\leq P_{\rm opt}(H)$, as required.  See \cite[Sec.\ 5.2]{BMRT_git} for details.  We will ignore this issue and just concentrate on the simpler version of the construction.

\subsection{The construction}

Now we explain how to obtain the cocharacter $\lambda_{\rm opt}(x)$ described in the Hilbert-Mumford-Kempf-Hesselink-Rousseau Theorem, following the treatment of Kempf \cite{kempf}.
We restrict ourselves to a special case which still illustrates the main ideas.  Let $V$ be a $G$-module.  Let us consider {\em unstable} points in $V$: that is, points $v\in V$ such that the origin 0 belongs to $\ovl{G\cdot v}$.  Fix an unstable point $0\neq v\in V$; we will explain how to define $\lambda_{\rm opt}(v)$.  By the Hilbert-Mumford Theorem, $\lim_{a\to 0} \lambda(a)\cdot v= 0$ for some $\lambda\in Y(G)$.  Choose a maximal torus $T$ of $G$ such that $\lambda\in Y(T)$.  Then $\langle \lambda, \chi\rangle> 0$ for all $\chi\in {\rm supp}_T(v)$ by Example~\ref{ex:module_lim}.  Write ${\rm supp}_T(v)= \{\chi_1,\ldots, \chi_t\}$.  Then $v$ can be written as a sum of nonzero weight vectors
$v_1,\ldots, v_t$ corresponding to the weights $\chi_1,\ldots, \chi_t$, respectively, and we have
\begin{equation}
\label{eqn:speed}
 \lambda(a)\cdot v= \lambda(a)\cdot (v_1+\cdots + v_t)= a^{n_1}v_1+\cdots +a^{n_t}v_t
\end{equation}
for all $a\in k^*$, where $n_i:= \langle \lambda, \chi_i\rangle> 0$ for $1\leq i\leq t$.  Intuitively, the speed at which $\lambda(a)\cdot v$ approaches 0 is determined by the smallest of the $n_i$.

This motivates the following definition.

\begin{defn}
\label{defn:numerical}
 Let $V$, $v$, $T$ and $\chi_1,\ldots, \chi_t$ be as above.  Define $\mu_{v,T}\colon Y(T)\ra {\mathbb Z}$ by
 \begin{equation}
\label{eqn:linmin}
 \mu_{v,T}(\lambda)= \min_{1\leq i\leq t} \psi_i(\lambda),
\end{equation}
where $\psi_i\colon Y(T)\ra {\mathbb Z}$ is given by $\psi_i(\lambda)= \langle \lambda, \chi_i\rangle$.

 We see that $\lim_{a\to 0} \lambda(a)\cdot v$ exists if and only if $\mu_{v,T}(\lambda)\geq 0$, and $\lim_{a\to 0} \lambda(a)\cdot v= 0$ if and only if $\mu_{v,T}(\lambda)> 0$.  Given $g\in G$, we have
\begin{equation}
\label{eqn:mu_nat}
 \mu_{g\cdot v, gTg^{-1}}(g\cdot \lambda)= \mu_{v,T}(\lambda)
\end{equation}
(exercise).
\end{defn}

In fact, we can show that the value of $\mu_{v,T}(\lambda)$ doesn't depend on the choice of $T$.  To see this, recall that the decomposition of $V$ into weight spaces works for any torus $S$ of $G$, not just for a maximal torus.  In particular, it works for $S= {\rm Im}(\lambda)$.  If $\zeta$ is a weight of $V$ with respect to $S$ then $V_\zeta$ is $T$-stable, so it splits into a direct sum of weight spaces $V_\chi$ for $V$ with respect to $T$, and we have $\langle \lambda, \zeta\rangle= \langle \lambda, \chi\rangle$ for every $\chi$ that appears in this sum.  The assertion now follows.

We define $\mu_v\colon Y(G)\ra {\mathbb Z}$ by $\mu_v(\lambda)= \mu_{v,T}(\lambda)$, where $T$ is any maximal torus of $G$ such that $\lambda\in Y(T)$.  We call $\mu_v$ the {\em numerical function} associated to $v$.

\begin{lem}
\label{lem:Pinvce}
 Let $V$ and $v$ be as above, let $\lambda\in Y(G)$ and let $u\in R_u(P_\lambda)$.  Then $\mu_v(\lambda)= \mu_v(u\cdot \lambda)$.
\end{lem}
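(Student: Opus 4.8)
The plan is to move the conjugation by $u$ off the cocharacter and onto the point $v$, and then to show that acting by an element of $R_u(P_\lambda)$ cannot change the smallest $\lambda$-weight occurring in $v$. For the first step, choose a maximal torus $T$ of $G$ with $\lambda\in Y(T)$, so that $u\cdot\lambda\in Y(uTu^{-1})$, and apply the equivariance formula \eqref{eqn:mu_nat} with the element $g=u$ and with $u^{-1}\cdot v$ in the role of $v$; since $u\cdot(u^{-1}\cdot v)=v$ this reads $\mu_v(u\cdot\lambda)=\mu_{u^{-1}\cdot v}(\lambda)$. (One can also see this directly from the fact that $\mu_v(\nu)$ is the largest integer $n$ for which $\lim_{a\to 0}a^{-n}\nu(a)\cdot v$ exists: because $(u\cdot\lambda)(a)\cdot v=u\cdot\big(\lambda(a)\cdot(u^{-1}\cdot v)\big)$ and $x\mapsto u\cdot x$ is a linear automorphism of $V$, such a limit exists for the pair $(u\cdot\lambda,v)$ precisely when it exists for the pair $(\lambda,u^{-1}\cdot v)$, by Remark~\ref{rem:limit_props}(a) applied to $x\mapsto u\cdot x$ and to its inverse.) Since $R_u(P_\lambda)$ is a group, $u^{-1}\in R_u(P_\lambda)$, so it suffices to prove $\mu_{w\cdot v}(\lambda)=\mu_v(\lambda)$ for every $w\in R_u(P_\lambda)$.

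To prove this, write $v=\sum_{\chi\in{\rm supp}_T(v)}v_\chi$ with each $v_\chi\in V_\chi$ nonzero, and set $m:=\mu_v(\lambda)=\min\{\langle\lambda,\chi\rangle\mid\chi\in{\rm supp}_T(v)\}$. Lemma~\ref{lem:wts_increase}, applied to each weight vector $v_\chi$ with the element $w\in R_u(P_\lambda)$, shows that every $T$-weight $\chi'$ occurring in $w\cdot v_\chi-v_\chi$ satisfies $\langle\lambda,\chi'\rangle>\langle\lambda,\chi\rangle\ge m$. Summing over $\chi$ gives $w\cdot v=v+r$, where $r$ is a sum of $T$-weight vectors all of whose weights $\chi'$ satisfy $\langle\lambda,\chi'\rangle>m$.

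Finally, decompose $V$ according to the value of $\langle\lambda,-\rangle$ on $T$-weights: $V=\bigoplus_{j\in{\mathbb Z}}V^{(j)}$ with $V^{(j)}=\bigoplus_{\chi\,:\,\langle\lambda,\chi\rangle=j}V_\chi$. By the choice of $m$, the $V^{(m)}$-component of $v$ is nonzero while its $V^{(j)}$-components vanish for $j<m$, and $r$ lies in $\bigoplus_{j>m}V^{(j)}$. Hence $w\cdot v=v+r$ has nonzero $V^{(m)}$-component and vanishing $V^{(j)}$-component for all $j<m$, so ${\rm supp}_T(w\cdot v)$ meets $\{\chi\mid\langle\lambda,\chi\rangle=m\}$ and avoids $\{\chi\mid\langle\lambda,\chi\rangle<m\}$. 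Therefore $\mu_{w\cdot v}(\lambda)=m=\mu_v(\lambda)$, which is what was needed.

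There is no genuine difficulty here beyond careful bookkeeping; the one point worth flagging is that it is the \emph{strict} inequality in Lemma~\ref{lem:wts_increase}, and correspondingly the hypothesis $u\in R_u(P_\lambda)$ rather than just $u\in P_\lambda$, that forces $r$ to contribute nothing in the bottom layer $V^{(m)}$, thereby ruling out cancellation there; for a general element of $P_\lambda$ one would only obtain the inequality $\mu_{w\cdot v}(\lambda)\ge\mu_v(\lambda)$.
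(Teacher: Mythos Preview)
Your proof is correct and follows essentially the same route as the paper's: reduce via the equivariance relation \eqref{eqn:mu_nat} to comparing $\mu_v(\lambda)$ with $\mu_{u^{-1}\cdot v}(\lambda)$, then use Lemma~\ref{lem:wts_increase} to see that $u^{-1}\cdot v$ differs from $v$ only by terms of strictly higher $\lambda$-weight, so the minimum weight is unchanged. Your write-up is somewhat more explicit than the paper's---you spell out the layer decomposition $V=\bigoplus_j V^{(j)}$ to make the non-cancellation in the bottom layer visible, and you relabel $u^{-1}$ as a generic $w\in R_u(P_\lambda)$---but the argument is the same.

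One small remark on your closing comment: your observation that for $w\in P_\lambda$ the present argument only yields $\mu_{w\cdot v}(\lambda)\ge\mu_v(\lambda)$ is accurate as stated, but note that applying the same one-sided inequality to $w^{-1}\in P_\lambda$ acting on $w\cdot v$ gives the reverse inequality, so in fact equality holds for all $w\in P_\lambda$. The strict inequality in Lemma~\ref{lem:wts_increase} is not essential for the truth of the lemma, only for the particular one-shot argument you (and the paper) give.
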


\begin{proof}
 By (\ref{eqn:mu_nat}), $\mu_v(u\cdot \lambda)= \mu_{u^{-1}\cdot v}(\lambda)$, so it's enough to show that $\mu_{u^{-1}\cdot v}(\lambda)= \mu_v(\lambda)$.  Fix a maximal torus $T$ of $G$ such that $\lambda\in Y(T)$; we show that $\mu_{u^{-1}\cdot v,T}(\lambda)= \mu_{v,T}(\lambda)$.  Let $n= \mu_{v,T}(\lambda)$; then there exists at least one $\widetilde{\chi}\in {\rm supp}_T(v)$ such that $\langle \lambda, \widetilde{\chi}\rangle= n$, and $\langle \lambda, \chi\rangle\geq n$ for all $\chi\in {\rm supp}_T(v)$.  Lemma~\ref{lem:wts_increase} implies that $u^{-1}\cdot v= v+ w$ for some $w\in V$ such that $\langle \lambda, \chi'\rangle> n$ for all $\chi'\in {\rm supp}_T(w)$.  It follows that $\mu_{u^{-1}\cdot v,T}(\lambda)= \mu_{v,T}(\lambda)$, as required.
\end{proof}

It is convenient below to work with real vector spaces rather than ${\mathbb Z}$-modules.  We may regard $Y(T)$ as an integer lattice inside the real vector space $Y_{\mathbb R}(T):= Y(T)\otimes_{\mathbb Z} {\mathbb R}$.  Just as we can form $Y(G)$ by glueing together the pieces $Y(T)$, we can form a space $Y_{\mathbb R}(G)$ by glueing together the pieces $Y_{\mathbb R}(T)$.  (This construction is not entirely straightforward---cf.\ \cite[Sec.\ 2]{BMR_sTCC}---but we omit the details.)  We may regard $Y(G)$ as a subset of $Y_{\mathbb R}(G)$, and the $G$-action on $Y(G)$ extends to a $G$-action on $Y_{\mathbb R}(G)$ in a natural way.  We can extend the functions $\psi_i$ from Definition~\ref{defn:numerical} to ${\mathbb R}$-linear functions $\psi_i\colon Y_{\mathbb R}(T)\ra {\mathbb R}$.  This allows us to extend $\mu_{v,T}$ to a function from $Y_{\mathbb R}(T)$ to ${\mathbb R}$ via (\ref{eqn:linmin}), and one can show we get a well-defined function $\mu_v\colon Y_{\mathbb R}(G)\ra {\mathbb R}$ (note that $Y_{\mathbb R}(G)$ is the union of all the $Y_{\mathbb R}(T)$).

We need one more ingredient before we prove the existence of $\lambda_{\rm opt}(x)$.  If we multiply $\lambda$ in (\ref{eqn:speed}) by a positive integer $m$ then the integers $n_i$ are replaced by $mn_i$.  To make sense of the intuitive idea that $\lambda_{\rm opt}$ is the cocharacter that ``takes $v$ to 0 as fast as possible'', we need some way to measure the size of $\lambda$.  We do this by means of a {\em length function}: this is a $G$-invariant function $|\!|\cdot |\!|\colon Y_{\mathbb R}(G)\ra {\mathbb R}$, $\lambda\mapsto |\!|\lambda |\!|$, such that the restriction of $|\!|\cdot |\!|$ to each vector space $Y_{\mathbb R}(T)$ is the norm arising from a nondegenerate symmetric ${\mathbb Z}$-valued bilinear form on $Y(T)$.  (So $|\!|\lambda |\!|\geq 0$ for all $\lambda\in Y_{\mathbb R}(G)$, with equality if and only if $\lambda= 0$, and $|\!|c\lambda |\!|= c|\!|\lambda |\!|$ for all $\lambda\in Y_{\mathbb R}(G)$ and all $c\in {\mathbb R}_+$.)  Below we fix, once and for all, a choice of length function.

We define $f_v\colon Y_{\mathbb R}(G)\backslash\{0\}\ra {\mathbb R}$ by
\begin{equation}
\label{eqn:normalised_numerical}
 f_v(\lambda)= \frac{\mu_v(\lambda)}{|\!|\lambda |\!|}.
\end{equation}
If $T$ is a maximal torus of $G$ then we denote by $f_{v,T}$ the restriction of $f_v$ to $Y_{\mathbb R}(T)\backslash\{0\}$.  It follows from Lemma~\ref{lem:Pinvce} and the conjugation-invariance of $|\!|\cdot |\!|$ that
\begin{equation}
\label{eqn:f_Pinvce}
 f_v(u\cdot \lambda)= f_v(\lambda)\ \mbox{for all $\lambda\in Y(G)\backslash \{0\}$ and all $u\in R_u(P_\lambda)$}.
\end{equation}

We have
\begin{equation}
\label{eqn:invce}
 f_{v,T}(c\lambda)= f_{v,T}(\lambda)
\end{equation}
for any $\lambda\in Y_{\mathbb R}(T)\backslash \{0\}$ and any $c\in {\mathbb R}_+$---the factors of $c$ that appear in the numerator and the denominator of (\ref{eqn:normalised_numerical}) cancel each other out.

\begin{lem}
\label{lem:loc_max}
 Fix a maximal torus $T$ of $G$.  There exists $\lambda_{v,T}\in Y(T)\backslash \{0\}$ such that $f_{v,T}$---as a function on $Y_{\mathbb R}(T)$---attains its maximum value $C_{v,T}$ at $\lambda_{v,T}$.  Moreover, $\lambda_{v,T}$ is unique subject to the condition that $|\!|\lambda_{v,T}|\!|$ is minimal.
\end{lem}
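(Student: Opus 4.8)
The plan is to use two structural facts about $f_{v,T}$: by Eqn.\ (\ref{eqn:invce}) it is invariant under scaling its argument by a positive real, so it is really a function on rays in $Y_{\mathbb R}(T)$; and $\mu_{v,T}=\min_{1\le i\le t}\psi_i$ is a concave, positively homogeneous, piecewise-linear function cut out by the \emph{integral} linear functionals $\psi_i$. First I would settle existence of the maximum by compactness. The set $\Sigma:=\{\mu\in Y_{\mathbb R}(T):|\!|\mu|\!|=1\}$ is compact, since the length function restricted to $Y_{\mathbb R}(T)$ is the norm of a positive-definite form, and the restriction of $f_{v,T}$ to $\Sigma$ is continuous because $\mu_{v,T}$ is; hence $f_{v,T}$ attains a maximum value $C_{v,T}$ on $\Sigma$, and by (\ref{eqn:invce}) this is the supremum of $f_{v,T}$ over all of $Y_{\mathbb R}(T)\setminus\{0\}$. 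Moreover $C_{v,T}>0$: by construction $v$ is destabilised by some $\lambda\in Y(T)$, so $\mu_{v,T}(\lambda)>0$ and therefore $f_{v,T}(\lambda)>0$.

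For the uniqueness statement I would pass to the convex body
\[
 B:=\{\mu\in Y_{\mathbb R}(T):\mu_{v,T}(\mu)\ge 1\}=\bigcap_{i=1}^{t}\{\mu\in Y_{\mathbb R}(T):\psi_i(\mu)\ge 1\},
\]
which is a closed convex rational polyhedron, is nonempty (it contains $\lambda/\mu_{v,T}(\lambda)$), and does not contain $0$ (since $\mu_{v,T}(0)=0<1$). As $|\!|\cdot|\!|^{2}$ is strictly convex and coercive on $Y_{\mathbb R}(T)$, it attains its minimum over $B$ at a unique point $\mu^{*}$, necessarily nonzero. The key computation links $\mu^{*}$ to the maximisers of $f_{v,T}$: if $\mu\ne 0$ and $\mu_{v,T}(\mu)>0$, then $\mu/\mu_{v,T}(\mu)$ lies in $B$ and has norm $1/f_{v,T}(\mu)$, so minimality of $|\!|\mu^{*}|\!|$ forces $f_{v,T}(\mu)\le 1/|\!|\mu^{*}|\!|$, with equality precisely when $\mu/\mu_{v,T}(\mu)=\mu^{*}$, i.e.\ when $\mu\in{\mathbb R}_{>0}\mu^{*}$; and for $\mu$ with $\mu_{v,T}(\mu)\le 0$ we have $f_{v,T}(\mu)\le 0<1/|\!|\mu^{*}|\!|$. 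Since $\mu^{*}\in B$ gives $f_{v,T}(\mu^{*})\ge 1/|\!|\mu^{*}|\!|$, we conclude $C_{v,T}=1/|\!|\mu^{*}|\!|$ and that the set of maximisers of $f_{v,T}$ is exactly the ray ${\mathbb R}_{>0}\mu^{*}$.

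It remains to descend to the lattice $Y(T)$. Since $B$ is a rational polyhedron, so is each of its faces; $\mu^{*}$ lies in the relative interior of its minimal face $F$, and because $\mu^{*}$ minimises the convex function $|\!|\cdot|\!|^{2}$ over $B\supseteq F$ while a relative neighbourhood of $\mu^{*}$ in $\mathrm{aff}(F)$ is contained in $F$, $\mu^{*}$ is in fact the orthogonal projection of $0$ onto $\mathrm{aff}(F)$. As $\mathrm{aff}(F)$ is a rational affine subspace and the defining form is rational, $\mu^{*}\in Y(T)\otimes{\mathbb Q}$. Hence ${\mathbb R}_{>0}\mu^{*}$ contains lattice points, and the lattice points on it are exactly the positive integer multiples of the primitive one; that primitive vector is a maximiser of $f_{v,T}$ (value $C_{v,T}$), it has minimal norm among lattice vectors on the ray, and by the previous paragraph it is the only maximiser of $f_{v,T}$ of minimal norm. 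This is the required $\lambda_{v,T}$.

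The one place requiring genuine care is this last step: the rationality of $\mu^{*}$, and hence the existence and uniqueness of an \emph{integral} optimal cocharacter rather than merely a real ray of maximisers. One must also be careful to record why $C_{v,T}>0$ (equivalently why $B$ is nonempty and avoids $0$), since without it the set of maximisers of $f_{v,T}$ could be a higher-dimensional cone and there would be no unique minimal-norm lattice point. Everything else is a routine compactness and strict-convexity argument.
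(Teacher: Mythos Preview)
Your argument is correct and reaches the same conclusion as the paper, but organises the convexity step in a dual way. The paper restricts $f_{v,T}$ to the unit sphere $\{\lambda:|\!|\lambda|\!|=1\}$ and argues (after simplifying to the case $t=1$) that two distinct maximisers would yield a chord on which $\psi$ is constant but $|\!|\cdot|\!|$ drops strictly below~$1$, contradicting maximality; it then asserts without details that the resulting ray is rational. You instead fix the sublevel set $B=\{\mu:\mu_{v,T}(\mu)\ge 1\}$ and minimise $|\!|\cdot|\!|^2$ over it, using strict convexity of the norm rather than of the sphere. Your formulation buys two things: it handles arbitrary $t$ uniformly, since $B$ is automatically convex as an intersection of half-spaces (no need to reduce to a single $\psi$); and it makes the rationality step concrete---$\mu^*$ is the orthogonal projection of $0$ onto the rational affine hull of a face of a rational polyhedron with respect to an integral form, hence lies in $Y(T)\otimes{\mathbb Q}$---which is precisely the detail the paper elects to omit. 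You are also right to isolate the hypothesis $C_{v,T}>0$: the paper's sphere argument silently uses it too (if $C_{v,T}\le 0$ the inequality $f_{v,T}(\lambda_3)>C_{v,T}$ fails), and in context it holds because the lemma is only ever applied to tori $T$ containing a cocharacter that destabilises $v$.
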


\noindent The uniqueness condition needs a word of explanation.  The proof below shows that the set
$$ \{\lambda\in Y_{\mathbb R}(T)\backslash \{0\}\mid f_{v,T}(\lambda)= C_{v,T}\} $$
is a ray ${\mathcal R}$: that is, it has the form $\{c\lambda_1\mid c\in {\mathbb R}_+\}$ for some $0\neq \lambda_1\in Y_{\mathbb R}(T)$.  The proof also shows that ${\mathcal R}$ contains at least one element of $Y(T)$.  As $Y(T)$ is a lattice in $Y_{\mathbb R}(T)$, there is a unique element $\lambda_{v,T}\in Y(T)$ of ${\mathcal R}$ that is closest to the origin.

\begin{proof}
 For simplicity, we assume $t= 1$ in (\ref{eqn:linmin}): so $f_{v,T}$ has the form $\displaystyle f_{v,T}(\lambda)= \frac{\psi(\lambda)}{|\!|\lambda|\!|}$ for some linear function $\psi\colon Y_{\mathbb R}(T)\ra {\mathbb R}$.  Let $S$ be the unit sphere in $Y_{\mathbb R}(T)$ (with respect to $|\!|\cdot |\!|$).  Then $f_{v,T}|_S$ is a continuous function on the compact set $S$, so it attains a maximum value $C_{v,T}$---say, at $\lambda_1\in S$---and (\ref{eqn:invce}) implies that $C_{v,T}$ is the maximum value attained by $f_{v,T}$ on the whole of $Y_{\mathbb R}(T)\backslash \{0\}$.  Suppose $f_{v,T}(\lambda_2)= f_{v,T}(\lambda_1)$ for some $\lambda_2\in S$ with $\lambda_2\neq \lambda_1$.  Choose any $c\in (0,1)$; set $\lambda_3= c\lambda_1+ (1- c)\lambda_2$.  Then $\psi(\lambda_1)= \psi(\lambda_2)= C_{v,T}$, so $\psi(\lambda_3)= C_{v,T}$ by linearity of $\psi$; but $|\!|\lambda_3|\!|< 1$ since $S$ is convex.  This gives $f_{v,T}(\lambda_3)> C_{v,T}$, a contradiction.  We deduce that the set of points in $Y_{\mathbb R}(T)\backslash \{0\}$ where $f_{v,T}$ attains its maximum value $C_{v,T}$ is precisely the ray ${\mathcal R}$ through $\lambda_1$.
 
 Because $|\!|\cdot |\!|$ and the linear functions $\psi_i$ are defined over ${\mathbb Z}$ in a suitable sense, it can be shown that ${\mathcal R}$ contains a point from $Y(T)$; we omit the details.  The uniqueness of $\lambda_{v,T}$ now follows from the paragraph before the proof.
\end{proof}

We can now state and prove our main theorem.

\begin{thm}
 The function $f_v$ attains its maximum value $C$ at some $\lambda_{\rm opt}= \lambda_{\rm opt}(v)\in Y(G)$.  Moreover, $\lambda_{\rm opt}$ is unique up to $R_u(P_{\lambda_{\rm opt}})$-conjugacy, subject to the condition that $|\!|\lambda_{\rm opt}|\!|$ is minimal.
\end{thm}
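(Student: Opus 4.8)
The plan is to bootstrap from the single-torus result of Lemma~\ref{lem:loc_max}, using two ingredients: the $R_u(P_\lambda)$-invariance of $f_v$ from (\ref{eqn:f_Pinvce}), and the standard fact that any two parabolic subgroups of $G$ contain a common maximal torus. Together these will let me move any two competing cocharacters into a common maximal torus without changing their $f_v$-values, and then apply Lemma~\ref{lem:loc_max} on that torus. First, $f_v$ is bounded above: by Cauchy--Schwarz each restriction $f_{v,T}$ is bounded by the largest length of a weight of $V$ with respect to $T$, and since $|\!|\cdot|\!|$ is $G$-invariant this bound is independent of $T$. Hence $C:=\sup_{0\neq\lambda\in Y_{\mathbb R}(G)}f_v(\lambda)$ is finite; and $C>0$ because $v$ is unstable, so $\lim_{a\to 0}\lambda(a)\cdot v=0$ for some $\lambda\in Y(G)$ (as noted just before Definition~\ref{defn:numerical}), giving $f_v(\lambda)=\mu_v(\lambda)/|\!|\lambda|\!|>0$.

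For existence, choose $\lambda_n\in Y(G)$ with $f_v(\lambda_n)\to C$; by (\ref{eqn:invce}) I may assume $|\!|\lambda_n|\!|=1$. Fix the index $0$. For each $n$, pick a common maximal torus $S_n$ of $P_{\lambda_0}$ and $P_{\lambda_n}$. Since ${\rm Im}(\lambda_0)$ lies in $Z(L_{\lambda_0})$, a standard argument lets me conjugate $\lambda_0$ by an element of $R_u(P_{\lambda_0})$ so that ${\rm Im}(\lambda_0)\subseteq S_n$; by (\ref{eqn:f_Pinvce}) this does not change $f_v(\lambda_0)$, and likewise I move $\lambda_n$ into $Y_{\mathbb R}(S_n)$. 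Now Lemma~\ref{lem:loc_max} applies inside $Y_{\mathbb R}(S_n)$: there is a unique optimal ray, and a quantitative refinement of the convexity argument in its proof shows that any unit cocharacter whose $f_v$-value is within $\varepsilon$ of $\max f_{v,S_n}\geq f_v(\lambda_n)$ lies within $O(\sqrt{\varepsilon})$ of that ray --- so the (conjugated) $\lambda_0$ and $\lambda_n$ lie close together, uniformly in $n$. Carrying this out for all indices shows the normalised sequence is Cauchy in $Y_{\mathbb R}(G)$, hence converges to some $\widehat\lambda$ with $f_v(\widehat\lambda)=C$; as in Lemma~\ref{lem:loc_max}, the rationality of $|\!|\cdot|\!|$ and the $\psi_i$ lets me replace $\widehat\lambda$ by the cocharacter $\lambda_{\rm opt}\in Y(G)\cap{\mathbb R}_{>0}\widehat\lambda$ closest to the origin, and (\ref{eqn:invce}) gives $f_v(\lambda_{\rm opt})=C$.

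For uniqueness, let $\lambda',\lambda''\in Y(G)$ both attain $C$ with $|\!|\cdot|\!|$ minimal. Pick a common maximal torus $T_1$ of $P_{\lambda'}$ and $P_{\lambda''}$; conjugating by suitable $u_1\in R_u(P_{\lambda'})$ and $u_2\in R_u(P_{\lambda''})$ as above, put $\nu_1:=u_1\cdot\lambda'$ and $\nu_2:=u_2\cdot\lambda''$ in $Y(T_1)$, with $f_v(\nu_1)=f_v(\nu_2)=C$ by (\ref{eqn:f_Pinvce}). Since $C$ is the global supremum, $\max f_{v,T_1}=C$, so by Lemma~\ref{lem:loc_max} $\nu_1$ and $\nu_2$ lie on the same optimal ray of $Y_{\mathbb R}(T_1)$; minimality of length then forces $\nu_1=\nu_2=:\nu$. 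Hence $u_1P_{\lambda'}u_1^{-1}=P_\nu=u_2P_{\lambda''}u_2^{-1}$, and as $u_1\in P_{\lambda'}$ and $u_2\in P_{\lambda''}$ this yields $P_{\lambda'}=P_{\lambda''}=:P$. Writing $u_2^{-1}u_1=wl$ with $w\in R_u(P)$ and $l\in L_{\lambda'}=C_G({\rm Im}(\lambda'))$, and using that $l$ fixes $\lambda'$, I get $\lambda''=(u_2^{-1}u_1)\cdot\lambda'=w\cdot\lambda'$ with $w\in R_u(P_{\lambda'})$, as required.

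The main obstacle is the convergence step. Unlike the fixed-torus statement of Lemma~\ref{lem:loc_max}, the value $C_{v,T}$ genuinely depends on $T$ (already for $V=k^2$, $G={\rm SL}_2(k)$ and $v$ not a weight vector for $T$), so one cannot simply optimise over a single $Y_{\mathbb R}(T)$. Making the ``Cauchy after conjugation'' argument precise --- uniformly controlling how the optimal rays in different maximal tori fit together inside the somewhat intricate space $Y_{\mathbb R}(G)$, and checking that the limiting cocharacter is a genuine rational one --- is the technical heart of the Kempf--Rousseau--Hesselink construction.
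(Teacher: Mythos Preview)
Your uniqueness argument is essentially the paper's: pick a common maximal torus of $P_{\lambda'}$ and $P_{\lambda''}$, conjugate both cocharacters into it by unipotent elements using (\ref{eqn:f_Pinvce}), and then invoke the uniqueness in Lemma~\ref{lem:loc_max} to force the two images to coincide.

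The existence argument, however, has a genuine gap --- one you essentially concede in your final paragraph. The Cauchy-sequence strategy is not completed: you have not specified a metric on $Y_{\mathbb R}(G)$ in which ``Cauchy'' and ``converges'' make sense; the $O(\sqrt{\varepsilon})$ estimate is asserted rather than proved and would need to be uniform over all the tori $S_n$; and the conjugated copies of $\lambda_0$ live in different tori $S_n$, so it is unclear in what sense they form a single sequence at all. Saying that making this precise ``is the technical heart of the Kempf--Rousseau--Hesselink construction'' is an admission, not an argument.

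The paper sidesteps all of this with an elementary observation you have missed. You correctly note that $C_{v,T}$ genuinely depends on $T$, but the crucial point is that the set $\{C_{v,T}: T\ \mbox{a maximal torus of}\ G\}$ is \emph{finite}. Fix one maximal torus $T_0$; every other is $gT_0g^{-1}$ for some $g\in G$, and by (\ref{eqn:mu_nat}) one has $C_{v,gT_0g^{-1}} = C_{g^{-1}\cdot v,\,T_0}$. But $f_{g^{-1}\cdot v,\,T_0}$, and hence its maximum, depends only on ${\rm supp}_{T_0}(g^{-1}\cdot v)$, which is a subset of the finite set $\Phi_{T_0}(V)$. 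So only finitely many values of $C_{v,T}$ occur. The supremum $C$ is therefore a maximum, attained already at $\lambda_{v,T}\in Y(T)$ for any $T$ with $C_{v,T}=C$; no limiting procedure is needed.
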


\begin{proof}
 First we show that the set $\{C_{v,T}\mid \mbox{$T$ is a maximal torus of $G$}\}$ is finite.  Eqn.\ (\ref{eqn:mu_nat}) implies that $\mu_{v,gTg^{-1}}(\lambda)= \mu_{g^{-1}\cdot v, T}(g^{-1}\cdot \lambda)$ for every maximal torus $T$, every $\lambda\in Y(T)$ and every $g\in G$.  It follows that $C_{v,gTg^{-1}}= C_{g^{-1}\cdot v, T}$ for all $g\in G$.  But $\Phi_T(V)$ (the set of weights of $T$ on $V$) is finite, so the set of subsets $\{{\rm supp}_T(g^{-1}\cdot v)\,|\,g\in G\}$ of $\Phi_T(V)$ is finite, and our claim follows.
 
 So let $C$ be the largest of the $C_{v,T}$.  Clearly $C$ is the maximum value of $f_v$, and it is attained at some $0\neq \lambda_{\rm opt}\in Y(G)$ of minimum length.  Let $T_1$ and $T_2$ be maximal tori of $G$ and let $\lambda_1:= \lambda_{T_1}\in Y(T_1)$ and $\lambda_2:= \lambda_{T_2}\in Y(T_2)$ be the cocharacters provided by Lemma~\ref{lem:loc_max}.  Set $P_1= P_{\lambda_1}$ and $P_2= P_{\lambda_2}$.  Suppose $f_v(\lambda_1)= f_v(\lambda_2)= C$.  It is enough to prove that $\lambda_2\in R_u(P_1)\cdot \lambda_1$.
 
 Recall from Section~\ref{sec:alggps} that $P_1\cap P_2$ contains a maximal torus $T$ of $G$; clearly $T$ is also a maximal torus of both $P_1$ and $P_2$.  As the maximal tori $T_1$ and $T$ of $P_1$ are conjugate, we have $g_1\cdot \lambda_1\in Y(T)$ for some $g_1\in P_1$.  But $\lambda_1$ is fixed by $L_{\lambda_1}$ (a Levi subgroup of $P_1$), so $u_1\cdot \lambda_1\in Y(T)$ for some $u_1\in R_u(P_1)$.  Now $f_v(u_1\cdot \lambda_1)= f_v(\lambda_1)= C$ by (\ref{eqn:f_Pinvce}).  It follows that $C= C_{v,T}$ and $u_1\cdot \lambda_1= c\lambda_{v,T}$ for some $c\in {\mathbb R}+$, where $\lambda_{v,T}$ is as in Lemma~\ref{lem:loc_max}.  Minimality of $|\!|\lambda_1|\!|$ and $|\!|\lambda_{v,T}|\!|$ implies that $c= 1$, so $u_1\cdot \lambda_1= \lambda_{v,T}$.
 
 By the same argument, $u_2\cdot \lambda_2= \lambda_{v,T}$ for some $u_2\in R_u(P_2)$.  But then $P_1= P_{\lambda_{v,T}}= P_2$, so $R_u(P_1)= R_u(P_2)$ and we get $\lambda_2= u_2^{-1}u_1\cdot \lambda_1\in R_u(P_1)\cdot \lambda_1$.  This completes the proof.
\end{proof}

\begin{rem}
 The constructions described above are well-behaved under conjugation by $G$: cf.\ (\ref{eqn:mu_nat}).  It follows that $P_{\rm opt}(g\cdot v)= gP_{\rm opt}(v)g^{-1}$ for any $g\in G$.  We leave the details to the reader.
\end{rem}

\subsection{Applications to $G$-complete reducibility}
\label{sec:opt_applns}

We spend the rest of this section deriving some consequences of Theorem~\ref{thm:opt} for $G$-complete reducibility.

\begin{thm}[{\cite[Thm.\ 3.10]{BMR}}]
\label{thm:Clifford}
 Let $H\leq G$ be $G$-cr and let $N$ be a normal subgroup of $H$.  Then $N$ is $G$-cr.
\end{thm}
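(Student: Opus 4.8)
The plan is to argue by contradiction, using the optimal destabilising parabolic subgroup supplied by Theorem~\ref{thm:opt}. Suppose $N$ is not $G$-cr. As explained in Remark~\ref{rem:topfg}, we may assume all subgroups in play are topologically finitely generated, so Theorem~\ref{thm:opt} applies to $N$: there is a parabolic subgroup $P := P_{\rm opt}(N)$ of $G$ which is a witness that $N$ is not $G$-cr and which satisfies $N_G(N) \leq P$.

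Next I would feed in the hypothesis that $N$ is normal in $H$. This gives $H \leq N_G(N) \leq P$, so $P$ is a parabolic subgroup of $G$ containing $H$. Since $H$ is $G$-cr, Definition~\ref{defn:Gcr} provides a Levi subgroup $L$ of $P$ with $H \leq L$. But then $N \leq H \leq L$, so $N$ lies in a Levi subgroup of $P$ --- contradicting the fact that $P$ is a witness that $N$ is not $G$-cr (by definition of witness, no Levi subgroup of $P$ contains $N$). Hence $N$ must be $G$-cr.

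I expect no serious obstacle here: once Theorem~\ref{thm:opt} is available the argument is a one-line deduction, and all the real content sits in the property $N_G(N) \leq P_{\rm opt}(N)$, which forces the canonical destabilising parabolic for $N$ to contain $H$ as well. It is worth noting why the naive approach fails, since this highlights what makes the theorem nontrivial: starting from an \emph{arbitrary} parabolic $Q$ witnessing that $N$ is not $G$-cr is useless, because such a $Q$ need not contain $H$, so one cannot invoke $G$-complete reducibility of $H$; and one cannot fall back on the Borel--Tits construction of Example~\ref{ex:BT} either, since $N$ need not possess a nontrivial normal unipotent subgroup. Thus the whole point is the canonicity/normalisation of $P_{\rm opt}$, and the rest is immediate.
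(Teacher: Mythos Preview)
Your proof is correct and matches the paper's argument essentially line for line: both argue by contradiction, invoke Theorem~\ref{thm:opt} to obtain $P_{\rm opt}(N)$ containing $N_G(N)\supseteq H$, and then use $G$-complete reducibility of $H$ to place $H$ (hence $N$) inside a Levi subgroup of $P_{\rm opt}(N)$, contradicting the witness property. Your added commentary on why an arbitrary witnessing parabolic would not suffice is a helpful gloss but not part of the paper's proof.
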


\begin{proof}
 Suppose $N$ is not $G$-cr.  Then $P_{\rm opt}(N)$ is a witness that $N$ is not $G$-cr, and $H\leq N_G(N)\leq P_{\rm opt}(N)$.  Since $N$ is not contained in any Levi subgroup of $P_{\rm opt}(N)$, the larger group $H$ cannot be, either.  But this contradicts our assumption that $H$ is $G$-cr.  We conclude that $N$ is $G$-cr after all.
\end{proof}

\begin{rem}
 Let $G= {\rm GL}_n(k)$, and suppose $i\colon H\ra {\rm GL}_n(k)$ is a completely reducible embedding.  Clifford's Theorem says that the restriction of $i$ to a normal subgroup $N$ of $H$ is completely reducible.  Translating this into the language of $G$-complete reducibility, we see that if $H$ is a ${\rm GL}_n(k)$-cr subgroup of ${\rm GL}_n(k)$ then any normal subgroup of $H$ is ${\rm GL}_n(k)$-cr.  So Theorem~\ref{thm:Clifford} extends Clifford's Theorem to arbitrary $G$.
\end{rem}

\begin{prop}
\label{prop:normcent}
 Let $H$ be a $G$-cr subgroup of $G$.  Then $C_G(H)$ and $N_G(H)$ are $G$-cr.
\end{prop}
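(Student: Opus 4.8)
The plan is to reduce both assertions to the single statement that $N_G(H)$ is $G$-cr.  Since $C_G(H)$ is a normal subgroup of $N_G(H)$, once we know $N_G(H)$ is $G$-cr it follows at once from Theorem~\ref{thm:Clifford} that $C_G(H)$ is $G$-cr.  So it is enough to prove that $N:= N_G(H)$ is $G$-cr.

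Suppose not.  Let $P:= P_{\rm opt}(N)$ be the optimal destabilising parabolic subgroup of Theorem~\ref{thm:opt}, so that $P$ is a witness that $N$ is not $G$-cr; thus $N\leq P$ and $N$ lies in no Levi subgroup of $P$.  Since $H\leq N\leq P$ and $H$ is $G$-cr, we may conjugate $H$ by an element of $R_u(P)$ so that $H$ is contained in some Levi subgroup $L$ of $P$; this conjugation fixes $P$ and carries $N$ to another subgroup of $P$ which again lies in no Levi subgroup of $P$, so there is no harm in assuming $H\leq L$ from the start.  Write $N= {\mathcal G}({\mathbf n})$, with a generating tuple of $H$ forming part of ${\mathbf n}$ (passing if necessary to a topologically finitely generated subgroup as in Remark~\ref{rem:topfg}), and $P= P_\lambda$ with $\lambda$ the optimal destabilising cocharacter for ${\mathbf n}$.

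Next I would pin down the position of $N$ relative to $L$ by means of the projection $c_L\colon P\ra L$.  As $c_L$ is a homomorphism restricting to the identity on $L\supseteq H$, for $g\in N$ we get $c_L(g)Hc_L(g)^{-1}= c_L(gHg^{-1})= H$, so $c_L(N)\leq N_L(H)$, with equality since $c_L$ fixes $N_L(H)$ pointwise; as $\ker(c_L)= R_u(P)$, this gives $N= N_L(H)\ltimes V$, where $V:= N\cap R_u(P)$ is a normal unipotent subgroup of $N$.  A short commutator computation, using $H\leq L$, $V\leq R_u(P)$ and $V\leq N_G(H)$, shows moreover that $V$ centralises $H$.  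Two observations: first, $V\neq 1$, since otherwise $N= N_L(H)\leq L$ would sit inside a Levi subgroup of $P$, contradicting that $P$ is a witness; second, $V$ is finite, because $H$ being $G$-cr forces $N$ to be reductive by the refinement of \cite[Prop.\ 3.12]{BMR} recorded in Section~\ref{sec:Gcr_GIT}, so the connected normal unipotent subgroup $V^0$ is trivial.  (Note also that ${\mathbf n}':= \lim_{a\to 0}\lambda(a)\cdot {\mathbf n}$ has coordinates $c_L(n_i)$ and generates $c_L(N)= N_L(H)$; by the optimality of $\lambda$ the orbit $G\cdot {\mathbf n}'$ is closed, so $N_L(H)$ is $G$-cr by Theorem~\ref{thm:Gcr_crit}.)

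It remains to derive a contradiction from the existence of this nontrivial finite unipotent $V$, and this is the step I expect to require real work.  The cleanest target is to show that ${\mathbf n}'\in G\cdot {\mathbf n}$: this contradicts the defining property of the optimal cocharacter, which moves ${\mathbf n}$ out of $G\cdot {\mathbf n}$ into a different closed orbit.  By Corollary~\ref{cor:still_in_orbit} (and since $V\leq R_u(P)$), ${\mathbf n}'\in G\cdot {\mathbf n}$ is equivalent to $V= 1$, which in turn is the triviality of the algebraic $1$-cocycle $g\mapsto c_L(g)g^{-1}$ of $N$ with values in $V$ (here the fact that $V$ centralises $H$ is what makes the cocycle reformulation work).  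This cocycle kills $H$ (on which $c_L$ is the identity) and kills $N^0$ (any morphism from a connected group to the finite set $V$ is constant), so, $H$ and $N^0$ acting trivially on $V$, it descends to a $1$-cocycle of the finite group $N/(N^0H)$ with values in the finite $p$-group $V$.  Such a class need not vanish for a general destabilising parabolic, so this is precisely the point at which the extremal property of $P_{\rm opt}(N)$ developed in Section~\ref{sec:opt} --- the optimality and near-uniqueness of $\lambda$ --- has to be brought to bear to force the cohomology class to vanish, and hence $V= 1$.  All the earlier steps are routine manipulations with parabolic and Levi subgroups.
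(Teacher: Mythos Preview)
Your argument has a genuine gap at exactly the point you flag: you never actually show $V=1$, and the hope that ``the extremal property of $P_{\rm opt}(N)$ has to be brought to bear to force the cohomology class to vanish'' is misplaced.  Optimality is a red herring here; nothing in Section~\ref{sec:opt} gives a mechanism for killing a $1$-cocycle of a finite quotient with values in a finite $p$-group, and no such mechanism is needed.

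What you are missing is a one-line observation that makes the whole cohomological detour unnecessary.  After conjugating so that $H\leq L$, choose $\lambda$ with $P=P_\lambda$ and $L=L_\lambda$.  Then $H\leq L_\lambda=C_G({\rm Im}(\lambda))$ says precisely that ${\rm Im}(\lambda)\leq C_G(H)\leq N$, so $\lambda\in Y(N)$.  Now $V=N\cap R_u(P_\lambda)=R_u(P_\lambda(N))$ by the identity $R_u(P_\lambda(M))=R_u(P_\lambda)\cap M$ recorded in Section~\ref{sec:GIT}.  But $N\leq P_\lambda$ gives $P_\lambda(N)=N$, and $N$ is reductive, so $R_u(P_\lambda(N))=R_u(N)=1$.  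Hence $V=1$ and $N\leq L$, the desired contradiction.  (Equivalently: $P_\lambda(N)=N$ forces $L_\lambda(N)=N$, i.e.\ ${\rm Im}(\lambda)$ is central in $N$, so $N\leq L_\lambda$.)

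This is exactly the paper's argument, and it shows that neither the contradiction framing nor the optimal parabolic is needed: for \emph{any} parabolic $P$ containing $N$, choose a Levi $L=L_\lambda$ containing $H$, observe $\lambda\in Y(N)$, and conclude $N\leq L_\lambda$ directly.  Your decomposition $N=N_L(H)\ltimes V$ and the verification that $V$ centralises $H$ are correct but superfluous once you see that $\lambda$ lives inside $N$.
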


\begin{proof}
 For simplicity, we prove that $C_G(H)^0$ and $N_G(H)^0$ are $G$-cr under the assumption that $H$ is connected; the proof for $C_G(H)$ and $N_G(H)$ for general $H$ is completely analogous, but requires the formalism of $G$-complete reducibility for nonconnected reductive groups.  Recall from the discussion after Remark~\ref{rem:topfg} that $N_G(H)^0$ is reductive.  Let $P$ be a parabolic subgroup of $G$ that contains $N_G(H)^0$.  Then $N_G(H)^0$ contains $H$, so $P$ contains $H$, so some Levi subgroup $L$ of $P$ contains $H$, as $H$ is $G$-cr.  We can write $P= P_\lambda$ and $L= L_\lambda$ for some $\lambda\in Y(G)$.  Now $\lambda$ centralises $H$, so $\lambda$ is a cocharacter of $N_G(H)^0$.  We have $P_\lambda(N_G(H)^0)= P_\lambda\cap N_G(H)^0= N_G(H)^0$ since $N_G(H)^0\leq P_\lambda$.  But then $L_\lambda(N_G(H)^0)= N_G(H)^0$ (recall that a connected reductive group is a Levi subgroup of itself).  So $N_G(H)^0\leq L_\lambda$.
 
 This shows that $N_G(H)^0$ is $G$-cr.  It follows from Theorem~\ref{thm:Clifford} that $C_G(H)^0$ is $G$-cr, since $C_G(H)^0$ is normal in $N_G(H)^0$.
\end{proof}

\begin{cor}
 Let $H\leq G$.  Then $H$ is $G$-cr if and only if $N_G(H)$ is $G$-cr.
\end{cor}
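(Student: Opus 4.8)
The plan is to deduce both implications directly from the two results just established, so there is essentially no new work to do.

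First I would prove the ``if'' direction: suppose $N_G(H)$ is $G$-cr. Since $H$ is a normal subgroup of $N_G(H)$, Theorem~\ref{thm:Clifford} (the extension of Clifford's Theorem) applies with the $G$-cr group $N_G(H)$ in the role of ``$H$'' and with $H$ itself in the role of ``$N$'', and it yields immediately that $H$ is $G$-cr.

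Next I would prove the ``only if'' direction: suppose $H$ is $G$-cr. Then Proposition~\ref{prop:normcent} asserts that $N_G(H)$ (and $C_G(H)$) is $G$-cr, which is exactly what we want. Note that to run this argument in full generality one needs the version of Proposition~\ref{prop:normcent} for arbitrary (not necessarily connected) $H$; in the excerpt only the connected case $N_G(H)^0$ is written out in detail, with the general case handled ``completely analogously'' using $G$-complete reducibility for nonconnected reductive groups. So the only point requiring care—and the nearest thing to an obstacle—is invoking that general form of Proposition~\ref{prop:normcent}; the logical structure of the corollary is otherwise trivial. Putting the two implications together completes the proof.
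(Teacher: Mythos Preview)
Your proposal is correct and matches the paper's own proof exactly: the paper simply says ``This follows from Proposition~\ref{prop:normcent} and Theorem~\ref{thm:Clifford},'' which is precisely the two implications you spell out. Your remark about needing the non-connected version of Proposition~\ref{prop:normcent} is a fair caveat but does not change the argument.
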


\begin{proof}
 This follows from Proposition~\ref{prop:normcent} and Theorem~\ref{thm:Clifford}.
\end{proof}

\begin{prop}
\label{prop:cent}
 Let $S$ be a $G$-cr subgroup of $G$ and let $H\leq C_G(S)$ such that $H$ is $C_G(S)$-cr.  Then $H$ is $G$-cr.
\end{prop}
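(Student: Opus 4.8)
The plan is to reduce the statement to Clifford's Theorem (Theorem~\ref{thm:Clifford}) by means of a positioning argument for cocharacters. Since $H\leq C_G(S)$, the multiplication map $S\times H\ra G$ is a homomorphism of algebraic groups, so its image $K:=SH$ is a closed subgroup of $G$; moreover $S\leq C_G(H)$ gives $K\leq N_G(H)$, so $H$ is normal in $K$. By Theorem~\ref{thm:Clifford} it therefore suffices to show that $K$ is $G$-cr.

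To prove that $K$ is $G$-cr I would use the criterion (the ``useful observation'' after Definition~\ref{defn:Gcr}, together with the parametrisation of parabolic and Levi subgroups by cocharacters) that $K$ is $G$-cr if and only if for every $\lambda\in Y(G)$ with $K\leq P_\lambda$ the group $K$ is $R_u(P_\lambda)$-conjugate to a subgroup of $L_\lambda$. So fix such a $\lambda$. Then $S\leq P_\lambda$, and since $S$ is $G$-cr there is $u\in R_u(P_\lambda)$ with $uSu^{-1}\leq L_\lambda$. Conjugating the whole configuration by $u$ --- this fixes $P_\lambda$ and $L_\lambda$, sends $H$ to $uHu^{-1}\leq P_\lambda$ and $M:=C_G(S)$ to $C_G(uSu^{-1})$, and preserves all the hypotheses because $G$-complete reducibility is conjugation-invariant --- we may assume $S\leq L_\lambda$.

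The crucial point is then that $S\leq L_\lambda=C_G({\rm Im}(\lambda))$ forces ${\rm Im}(\lambda)\leq C_G(S)=M$, that is, $\lambda\in Y(M)$. Hence $P_\lambda(M)=P_\lambda\cap M$ is a parabolic subgroup of $M$ containing $H$, with $L_\lambda(M)=L_\lambda\cap M$ and $R_u(P_\lambda(M))=R_u(P_\lambda)\cap M$. Because $H$ is $M$-cr, some Levi subgroup of $P_\lambda(M)$ contains $H$, so there is $v\in R_u(P_\lambda)\cap M$ with $vHv^{-1}\leq L_\lambda\cap M\leq L_\lambda$; and since $v\in M=C_G(S)$ we also have $vSv^{-1}=S\leq L_\lambda$. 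Thus $vKv^{-1}=(vSv^{-1})(vHv^{-1})\leq L_\lambda$ with $v\in R_u(P_\lambda)$, which is what was required. As $\lambda$ was arbitrary, $K$ is $G$-cr, and then $H\trianglelefteq K$ is $G$-cr by Theorem~\ref{thm:Clifford}.

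The main obstacle is precisely this positioning step: for a parabolic $P_\lambda$ merely containing $H$ there is no a priori relationship between $\lambda$ and $S$, so one cannot invoke the $C_G(S)$-complete reducibility of $H$ directly; enlarging $H$ to $SH$ and first exploiting the $G$-complete reducibility of $S$ is the device that forces $\lambda$ into $Y(C_G(S))$. A minor technical wrinkle is that $S$, and hence $C_G(S)$, may fail to be connected, so one should either restrict to the connected case (as in the proof of Proposition~\ref{prop:normcent}) or appeal to the formalism of $G$-complete reducibility for non-connected reductive groups, under which the identities $P_\lambda(M)=P_\lambda\cap M$ etc. and Theorem~\ref{thm:Clifford} remain valid; recall also that $C_G(S)$ is reductive because $S$ is $G$-cr.
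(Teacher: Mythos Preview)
Your proof is correct, and its overall strategy---reduce to $\lambda\in Y(C_G(S))$ by first placing $S$ inside $L_\lambda$, then invoke the $C_G(S)$-complete reducibility of $H$---is the same key idea the paper uses. The packaging, however, is genuinely different. The paper argues by contradiction: assuming $H$ is not $G$-cr, it invokes the optimal destabilising parabolic $P_{\rm opt}(H)$ from Theorem~\ref{thm:opt}, which automatically contains $N_G(H)\supseteq S$; then, after positioning $\lambda$ so that $S\leq L_\lambda$, it shows $P_\lambda(M)$ witnesses that $H$ is not $M$-cr. You instead enlarge $H$ to $K=SH$, prove directly that $K$ is $G$-cr (since any parabolic containing $K$ already contains $S$, no appeal to $P_{\rm opt}$ is needed at this stage), and then deduce $H$ is $G$-cr from Clifford's Theorem. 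Your route has the pleasant side-effect of establishing the stronger fact that $SH$ is $G$-cr, which the paper records separately in Remark~\ref{rem:cent_cvse}; on the other hand, since Theorem~\ref{thm:Clifford} itself rests on Theorem~\ref{thm:opt}, neither argument is logically lighter than the other.

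One small imprecision: when you ``conjugate the whole configuration by $u$'', conjugation by $u\in R_u(P_\lambda)$ fixes $P_\lambda$ but does \emph{not} fix $L_\lambda$. What you really want is to replace $S,H,K,M$ by their $u$-conjugates while keeping the pair $(P_\lambda,L_\lambda)$ fixed; then $uSu^{-1}\leq L_\lambda$ by choice of $u$ and the rest goes through exactly as you wrote. This is a phrasing issue only and does not affect the argument. Your handling of the connectedness caveat for $C_G(S)$ matches the paper's.
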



\begin{proof}
 As before, we replace $C_G(S)$ with $M:= C_G(S)^0$ to avoid having to deal with non-connected reductive groups.  Recall from Section~\ref{sec:Gcr_GIT} that $M$ is reductive.  Suppose $H$ is not $G$-cr.  Then $P_{\rm opt}(H)$ is a witness that $H$ is not $G$-cr and $N_G(H)\leq P_{\rm opt}(H)$; in particular, $S\leq P_{\rm opt}(H)$.  Since $S$ is $G$-cr, we can write $P_{\rm opt}(H)= P_\lambda$ for some $\lambda\in Y(G)$ such that $H\leq L_\lambda$.  Then $\lambda\in Y(M)$ and $H\leq P_\lambda(M)$.
 
 We claim that $P_\lambda(M)$ is a witness that $H$ is not $M$-cr.  Suppose $H$ is contained in a Levi subgroup of $P_\lambda(M)$.  Then $H\leq uL_\lambda(M)u^{-1}$ for some $u\in R_u(P_\lambda(M))$.  But then $H\leq uL_\lambda u^{-1}$, which is a Levi subgroup of $P_{\rm opt}(H)$ since $u\in R_u(P_{\rm opt}(H))$, so we get a contradiction.   This shows that $H$ is not $M$-cr.  But this contradicts our assumption on $H$.  We conclude that $H$ is $G$-cr after all.
\end{proof}

\begin{rem}
\label{rem:cent_cvse}
 It can be shown that if $S$ is linearly reductive then the converse to Proposition~\ref{prop:cent} also holds \cite[Cor.\ 3.21]{BMR}: so in this case, $H$ is $G$-cr if and only if $H$ is $C_G(S)$-cr (and likewise for $C_G(S)^0$).  This is false if $S$ is not linearly reductive, as we will see shortly.  In fact, one can show that $H$ is $C_G(S)$-cr if and only if $HS$ is $G$-cr \cite[Prop.\ 3.9]{BMR_commuting}.
 
 We mention a useful corollary in the linearly reductive case \cite[Ex.\ 3.23]{BMR}.  The group ${\rm SO}_n(k)$ sits inside ${\rm SL}_n(k)$.  Suppose $p\neq 2$ and let $H$ be a subgroup of ${\rm SO}_n(k)$.  Then $H$ is ${\rm SO}_n(k)$-cr if and only if $H$ is ${\rm SL}_n(k)$-cr.  To see this, observe that ${\rm SO}_n(k)$ is the fixed point set of the involution $\phi\in {\rm Aut}({\rm SL}_n(k))$ given by $\phi(A)= (A^T)^{-1}$.  The result now follows from (the non-connected version of) the previous paragraph applied to the (non-connected) reductive group $G:= S\ltimes {\rm SL}_n(k)$, where $S:= \langle \phi\rangle$---note that $S$ is linearly reductive as $p\neq 2$ and $C_G(S)^0= {\rm SO}_n(k)$.  By a similar argument, if $p\neq 2$ and $H$ is a subgroup of ${\rm Sp}_{2n}(k)$ then $H$ is ${\rm Sp}_{2n}(k)$-cr if and only if $H$ is ${\rm SL}_{2n}(k)$-cr.
\end{rem}

We finish the section by considering the following question.  Suppose $H_1$ and $H_2$ are commuting $G$-cr subgroups of $G$.  Is the product $H_1H_2$ also $G$-cr?  The answer is yes for $G= {\rm SL}_n(k)$ and $G= {\rm GL}_n(k)$, by an argument of Tange \cite[Lem.\ 4.5]{BMR_commuting}.  (Surprisingly, the following question seems to be open: if $G= {\rm SL}_n(k)$ or $G= {\rm GL}_n(k)$ and $H_1$ and $H_2$ are $G$-cr subgroups such that $H_1$ normalises $H_2$, must $H_1H_2$ also be $G$-cr?)  It follows from Remark~\ref{rem:cent_cvse} that the answer is yes for $G= {\rm SO}_n(k)$ and $G= {\rm Sp}_{2n}(k)$ if $p\neq 2$.  Using this fact together with detailed information due to Liebeck and Seitz about the subgroup structure of simple groups of exceptional type, Bate-Martin-R\"ohrle proved the following result.

\begin{prop}[{\cite[Thm.\ 1.3]{BMR_commuting}}]
 Let $G$ be connected and suppose $p$ is good for $G$ or $p> 3$.  Let $H_1$ and $H_2$ be connected $G$-cr subgroups of $G$ such that $H_1$ and $H_2$ commute.  Then $H_1H_2$ is $G$-cr.
\end{prop}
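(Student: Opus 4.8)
The plan is to reduce to the case that $G$ is simple and then split according to the type of $G$, the substance of the argument being concentrated in the exceptional types. Throughout, note first that $H_1H_2$ is connected and reductive: since $H_1$ and $H_2$ commute, multiplication gives a surjective homomorphism $H_1\times H_2\ra H_1H_2$ of reductive groups, and quotients of reductive groups are reductive.

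First I would reduce to $G$ simple. As $G$ is connected reductive there is an isogeny $\Phi\colon\widehat G\ra G$ with $\widehat G= Z(G)^0\times G_1\times\cdots\times G_r$, where $G_1,\ldots,G_r$ are the simple factors of a simply connected cover of $[G,G]$. Put $\widehat H_j= \bigl(\Phi^{-1}(H_j)\bigr)^0$. Each $\widehat H_j$ is connected, satisfies $\Phi(\widehat H_j)= H_j$ (finite fibres), and is $\widehat G$-cr by the isogeny remark following Proposition~\ref{prop:product}; moreover $[\widehat H_1,\widehat H_2]$ is connected and contained in the finite group $\ker\Phi$, hence trivial, so $\widehat H_1$ and $\widehat H_2$ commute. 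Thus it suffices to prove the result for $\widehat G$: granting it, $\widehat H_1\widehat H_2$ is $\widehat G$-cr, and the isogeny remark applied to $\Phi(\widehat H_1\widehat H_2)= H_1H_2$ gives the conclusion for $G$. Applying Proposition~\ref{prop:product} repeatedly (the projection to the torus factor $Z(G)^0$ being automatically $Z(G)^0$-cr) reduces us to the simple factors $G_i$, with $H_1,H_2$ replaced by their projections. The hypothesis ``$p$ good for $G$ or $p> 3$'' passes to each $G_i$; for $G_i$ not of type $A_n$ it forces $p\neq 2$, and for $G_i$ of exceptional type it forces $p\geq 5$.

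Next, the classical simple types. If $G$ has type $A_n$ then $G$ is isogenous to $\mathrm{SL}_{n+1}(k)$ and the conclusion is Tange's result \cite[Lem.\ 4.5]{BMR_commuting}. If $G$ has type $B_n$, $C_n$ or $D_n$ then $p\neq 2$, and $G$ is isogenous to $\mathrm{SO}_N(k)$ or $\mathrm{Sp}_N(k)$; by Remark~\ref{rem:cent_cvse} a subgroup of such a group is $G$-cr if and only if it is $\mathrm{SL}_N(k)$-cr. Hence $H_1$ and $H_2$ are commuting $\mathrm{SL}$-cr subgroups, so by the type $A_n$ case $H_1H_2$ is $\mathrm{SL}$-cr, and Remark~\ref{rem:cent_cvse} again gives that $H_1H_2$ is $G$-cr.

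Finally, the exceptional simple types, where the real obstacle lies; here $p\geq 5$. Since $H_1H_2$ is connected and reductive, the Liebeck--Seitz theorem recalled in Section~\ref{sec:history} shows that if $p> 7$ then $H_1H_2$ is automatically $G$-cr, so it remains to treat the finitely many pairs $(G,p)$ with $G$ simple exceptional and $p\in\{5,7\}$. For these I would invoke the detailed Liebeck--Seitz classification of the connected reductive subgroups of exceptional groups, together with the short lists of such subgroups that fail to be $G$-cr in small characteristic, and check in each case that no product $H_1H_2$ of commuting $G$-cr subgroups occurs on such a list. It is convenient to use the identity ``$H_2$ is $C_G(H_1)$-cr if and only if $H_2H_1$ is $G$-cr'' from Remark~\ref{rem:cent_cvse} to rephrase the question as whether $H_2$ is $C_G(H_1)^0$-cr: since $H_1$ is connected, reductive and $G$-cr, the group $C_G(H_1)^0$ is reductive and $G$-cr (Proposition~\ref{prop:normcent} and the discussion after Remark~\ref{rem:topfg}), it is proper in $G$ once $H_1\neq 1$, and its structure in characteristic $p\geq 5$---a central torus times a product of classical groups and exceptional groups of smaller dimension---permits an induction that feeds back into the classical case and into smaller exceptional groups, with Proposition~\ref{prop:Gcr_Lcr} disposing of the subcases where $C_G(H_1)^0$ is a Levi subgroup. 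I expect the main difficulty to be exactly this: the finite but intricate bookkeeping over exceptional groups in the two small characteristics $p=5,7$, which genuinely needs the explicit subgroup tables rather than any soft structural argument, and which is why the hypothesis on $p$ cannot simply be removed.
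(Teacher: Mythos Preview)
Your proposal is correct and follows essentially the same approach the paper sketches in the paragraph preceding the proposition: reduce to simple factors, settle type $A$ by Tange's lemma, the remaining classical types (where $p\neq 2$) via the $\mathrm{SO}/\mathrm{Sp}$-to-$\mathrm{SL}$ reduction of Remark~\ref{rem:cent_cvse}, and the exceptional types (where $p\geq 5$) via the Liebeck--Seitz subgroup structure information. The extra scaffolding you supply---the explicit isogeny reduction and the reformulation through $C_G(H_1)^0$ using the identity in Remark~\ref{rem:cent_cvse}---is sound and in line with the argument in the cited source \cite{BMR_commuting}.
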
 

\noindent But the answer to the question is no in general.  Liebeck has given an example with $p= 2$ and $G= {\rm Sp}_8(k)$ \cite[5.3]{BMR_commuting}; he found connected reductive subgroups $H_1$ and $H_2$ of $G$ such that $H_1H_2$ is not $G$-cr.  By Remark~\ref{rem:cent_cvse}, $H_1$ is not $C_G(H_2)$-cr and $H_2$ is not $C_G(H_1)$-cr.  This gives a counter-example to the converse of Proposition~\ref{prop:cent}.

\section{Other topics}

We briefly discuss some other topics related to $G$-complete reducibility and its applications.

\subsection{Non-connected $G$}
\label{sec:nonconn}

Even if we are interested mainly in $G$-complete reducibility for connected reductive groups, we have seen in Section~\ref{sec:opt_applns} that we must sometimes deal with non-connected ones.  The basic idea is simple.  Let $G$ be a non-connected reductive group.  Given $\lambda\in Y(G)$, we define
$$ P_\lambda= \left\{g\in G\ \left| \lim_{a\to 0} \lambda(a)g\lambda(a)^{-1}\ \mbox{exists}\right\}\right., $$
just as before, and we call $P_\lambda$ a {\em Richardson parabolic subgroup} (or {\em R-parabolic subgroup} for short).  We define $L_\lambda= C_G({\rm Im}(\lambda))$ as before, and we call $L_\lambda$ a {\em Richardson Levi subgroup} (or {\em R-Levi subgroup} for short).  Now we define a subgroup $H$ of $G$ to be $G$-completely reducible just as in Definition~\ref{defn:Gcr}, but replacing parabolic subgroups and Levi subgroups with R-parabolic subgroups and R-Levi subgroups, respectively.  See \cite[Sec.\ 6]{BMR} for details.

One technical point: R-parabolic subgroups are not always self-normalising.  But this does not cause any serious problems, because of the following result.

\begin{prop}[{\cite[Prop.\ 5.4(a)]{martin}}]
 If $P$ is any parabolic subgroup of $G^0$ then $N_G(P)$ is an R-parabolic subgroup of $G$.
\end{prop}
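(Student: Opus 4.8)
The plan is to produce a single cocharacter $\mu\in Y(G)$ with $P_\mu=N_G(P)$, where $P_\mu$ denotes the R-parabolic subgroup of $G$ attached to $\mu$. Fix a Levi subgroup $L$ of $P$ and, using the structure theory of the connected reductive group $G^0$, choose $\lambda\in Y(G^0)$ with $P=P_\lambda(G^0)$ and $L=L_\lambda(G^0)$. Since $L=C_{G^0}({\rm Im}(\lambda))$, the torus ${\rm Im}(\lambda)$ is a connected central subgroup of $L$, so in fact $\lambda\in Y(Z(L)^0)$. Inside $Y(Z(L)^0)$ consider
\[
 \mathcal{C}_L=\bigl\{\nu\in Y(Z(L)^0)\ :\ P_\nu(G^0)=P\bigr\};
\]
this is the relatively open convex cone cut out by the strict inequalities $\langle\nu,\alpha\rangle>0$ as $\alpha$ ranges over the roots of $G^0$ whose root subgroup lies in $R_u(P)$, and it is non-empty since $\lambda\in\mathcal{C}_L$.

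Next I would analyse $N:=N_G(P)$. For $n\in N$, conjugation by $n$ carries $L$ to some Levi subgroup of $P$, and any two Levi subgroups of $P$ are $R_u(P)$-conjugate, so $n=un'$ for some $u\in R_u(P)$ and some $n'\in N':=N_G(P)\cap N_G(L)$; hence $N$ is generated by $R_u(P)$ and $N'$. The subgroup $N'$ lies in $N_G(P)\cap N_G(Z(L)^0)$, so it acts on the lattice $Y(Z(L)^0)$ and preserves $\mathcal{C}_L$ (any element of $N_G(P)$ preserves the condition ``$P_\nu(G^0)=P$''). Since the connected group $(N')^0$ must act trivially on the discrete set $Y(Z(L)^0)$, this action factors through the finite component group $N'/(N')^0$; in particular $\lambda$ has finite $N'$-orbit, and I put
\[
 \mu=\sum_{\nu\in N'\cdot\lambda}\nu\ \in\ Y(Z(L)^0),
\]
the sum being over the distinct points of the orbit. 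As a finite sum of points of the open convex cone $\mathcal{C}_L$, the cocharacter $\mu$ lies in $\mathcal{C}_L$, and $n\cdot\mu=\mu$ for every $n\in N'$ by construction.

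It remains to check that $P_\mu=N_G(P)$. From $\mu\in\mathcal{C}_L$ we get $P_\mu\cap G^0=P_\mu(G^0)=P$, and as $P$ is connected this forces $P_\mu^0=P$, whence $P_\mu\subseteq N_G(P_\mu^0)=N_G(P)$. Conversely $R_u(P)\subseteq P=P_\mu^0\subseteq P_\mu$, and each $n\in N'$ fixes $\mu$, hence centralises ${\rm Im}(\mu)$, so $n\in C_G({\rm Im}(\mu))=L_\mu\subseteq P_\mu$; since $N_G(P)$ is generated by $R_u(P)$ and $N'$, this gives $N_G(P)\subseteq P_\mu$. Therefore $N_G(P)=P_\mu$ is an R-parabolic subgroup of $G$.

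The step I expect to require the most care is the bridge between the abstract group $N_G(P)$ and the combinatorial cone $\mathcal{C}_L$: the decomposition $N_G(P)=R_u(P)\cdot N'$ (that any normaliser element can be corrected by an element of $R_u(P)$ so as to normalise the chosen Levi $L$) and the verification that $\mathcal{C}_L$ is indeed a non-empty open convex cone in $Y_{\mathbb R}(Z(L)^0)$, stable under $N'$ and meeting the lattice $Y(Z(L)^0)$. Once that translation between conjugation in $G$ and the root/cocharacter combinatorics of $Z(L)^0$ is secured, the identity $P_\mu=N_G(P)$ follows formally.
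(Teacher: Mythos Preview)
The paper does not actually prove this proposition: it is merely stated with a citation to \cite[Prop.\ 5.4(a)]{martin}, so there is no proof in the present paper to compare against.

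Your argument is correct and is the natural one. A couple of points are worth making explicit when you write it up. First, the claim that $(N')^0$ acts trivially on $Y(Z(L)^0)$ can be sharpened: since $N_G(P)\cap G^0=N_{G^0}(P)=P$ (parabolics are self-normalising in $G^0$), we get $(N')^0=N_P(L)^0=L$, and $L$ certainly centralises $Z(L)^0$; so the action of $N'$ on $Y(Z(L)^0)$ factors through the finite group $N'/L$. Second, the ``convexity'' step is really just closure of $\mathcal{C}_L$ under addition in the lattice $Y(Z(L)^0)$: if $\langle\nu_i,\alpha\rangle>0$ for each root $\alpha$ of $R_u(P)$ then the same holds for $\sum_i\nu_i$, and the sum is again a genuine cocharacter of $Z(L)^0$ because $Y(Z(L)^0)$ is an abelian group. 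With these two remarks in place, the equality $P_\mu=N_G(P)$ follows exactly as you describe: $P_\mu\cap G^0=P$ gives $P_\mu\subseteq N_G(P)$, while $R_u(P)\subseteq P\subseteq P_\mu$ and $N'\subseteq C_G({\rm Im}(\mu))=L_\mu$ give the reverse inclusion via $N_G(P)=R_u(P)\cdot N'$.
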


An algorithm is given in \cite{BHMR_nonconn} for determining whether a subgroup of non-connected $G$ is $G$-cr by reducing to the case of connected reductive groups.

\subsection{Non-algebraically closed fields}

In this section we take $k$ to be an arbitrary field of characteristic $p> 0$, with algebraic closure $\ovl{k}$.  We take the point of view adopted in Borel's book \cite{borel}: we regard a variety $X$ defined over $k$ (a $k$-variety) as a $\ovl{k}$-variety together with a choice of $k$-structure.  If $Y$ is a closed subvariety of $X$ then we call $Y$ a $k$-subvariety of $X$ if $Y$ is defined over $k$.

Let $G$ be a connected reductive group defined over $k$ and let $H$ be a $k$-subgroup of $G$.  We say that $H$ is {\em $G$-cr over $k$} if whenever $H$ is contained in a $k$-parabolic subgroup $P$ of $G$, $H$ is contained in a $k$-Levi subgroup $L$ of $P$.  If $G= {\rm GL}_n(k)$ or ${\rm SL}_n(k)$ then we have the same characterisation as before: $H$ is $G$-completely reducible over $k$ if and only if the inclusion of $H$ in $G$ is a completely reducible representation (over $k$).

If $k'/k$ is an algebraic field extension then we can extend scalars and regard $H$ as a $k'$-subgroup of the connected reductive $k'$-group $G$, and we can ask whether $H$ is $G$-cr over $k'$.  In particular, we can ask whether $H$ is $G$-cr over $\ovl{k}$: this is equivalent to saying that $H$ is $G$-cr in our original sense.  If $k$ is perfect then the theory of $G$-complete reducibility over $k$ is similar to the algebraically closed case: for instance, one can show that $H$ is $G$-cr over $k$ if and only if it is $G$-cr \cite[Thm.\ 5.8]{BMR}.  The theory is much more complicated, however, if $k$ is not perfect.  Using Weil restriction, one can give an example of a subgroup $H$ such that $H$ is $G$-cr over $k$ but not $G$-cr.  An example with $H$ $G$-cr but not $G$-cr over $k$ is given in \cite[Ex.\ 7.22]{BMRT}; this is closely related to Example~\ref{ex:BMRT_TAMS}.

  We have already mentioned an open problem for non-algebraically closed fields in connection with Theorem~\ref{thm:mainconjalgclsd}.  Here is another.\medskip\\
\noindent {\bf Open Problem:} Suppose $H$ is $G$-cr over $k$.  Must $C_G(H)$ be $G$-cr over $k$?\medskip\\
One complication here is that $C_G(H)$ need not even be defined over $k$, so the problem has to be formulated carefully.  See \cite{uchiyama} for further discussion.

\subsection{K\"ulshammer's question}

Let $F$ be a finite group with Sylow $p$-subgroup $F_p$.  K\"ulshammer asked the following question \cite[Sec.\ 2]{kuls}.

\begin{qn}
\label{qn:kukshammer}
 Fix a homomorphism $\sigma\colon F_p\ra G$.  Is it true that there are only finitely many conjugacy classes of homomorphisms $\rho\colon F\ra G$ such that $\rho|_{F_p}$ is conjugate to $\sigma$?
\end{qn}

\noindent K\"ulshammer himself showed the answer is yes if $G= {\rm GL}_n(k)$ or ${\rm SL}_n(k)$ using simple representation-theoretic ideas.  Slodowy showed the answer is yes if $p$ is good for $G$ \cite[I.5, Thm.\ 3]{slodowy}.  The answer in general, however, is no: there is a counter-example---closely related (yet again!) to Example~\ref{ex:BMRT_TAMS}---with $p= 2$ and $G$ simple of type $G_2$ \cite{BMR_kuls}.  This counter-example can be interpreted in terms of the non-abelian cohomology discussed in Section~\ref{sec:history}.   Uchiyama has similar examples \cite[Sec.\ 6]{uchiyama3}.\bigskip\\
\noindent {\bf Open Problem:} All known counter-examples to Question~\ref{qn:kukshammer}---including one of Cram involving a non-reductive group $G$ \cite{cram}---are for $p= 2$.  Is there a counter-example with $p$ odd?\bigskip\\
The connection with $G$-complete reducibility is as follows: since there are---by Theorem~\ref{thm:fin_cr_rep}---only finitely many conjugacy classes of homomorphisms $\rho\colon F\ra G$ with $G$-cr image, a counter-example to Question~\ref{qn:kukshammer} must involve non-$G$-cr subgroups of $G$.

\subsection{Finite groups of Lie type}

We give an application of $G$-complete reducibility to simple groups of Lie type.  Suppose $G$ is a simple (algebraic) group of adjoint type.  Let $\sigma\colon G\ra G$ be a Frobenius map; then the fixed point subgroup $G^\sigma$ is a finite group of Lie type.  We have the following result.

\begin{prop}[{\cite[Prop.\ 2.2]{LMS}}]
\label{prop:Gcr_not_Gcr}
 Let $F$ be a finite $\sigma$-stable subgroup of $G$.  Then at least one of the following holds:\smallskip\\
 (a) $F$ is $G$-cr;\smallskip\\
 (b) $F$ is contained in a $\sigma$-stable proper parabolic subgroup of $G$.
\end{prop}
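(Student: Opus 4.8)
The plan is to produce a canonical $\sigma$-stable proper parabolic subgroup containing $F$ by means of the optimal destabilising construction. If $F$ is $G$-cr we are in case (a), so assume $F$ is not $G$-cr. Since $F$ is finite it is topologically finitely generated, so Theorems~\ref{thm:Gcr_crit} and~\ref{thm:opt} apply to it directly: writing $F=\mathcal{G}(\mathbf{h})$ for a generating tuple $\mathbf{h}\in F^m$, the conjugacy class $G\cdot\mathbf{h}$ is not closed, and we obtain the optimal destabilising parabolic $P_{\rm opt}(F)$, which is a proper parabolic subgroup of $G$, is a witness that $F$ is not $G$-cr, and contains $N_G(F)$, hence contains $F$. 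Because $\sigma(F)=F$ but $\sigma$ sends $\mathbf{h}$ to a (generally different) generating tuple of $F$, I would use the ``uniform $S$-instability'' refinement of the construction sketched after Theorem~\ref{thm:opt}, applied to the $\sigma$-stable set $F^m\subseteq G^m$ rather than to the single point $\mathbf{h}$, so that $P_{\rm opt}(F)$ depends only on the subgroup $F$.

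It then remains to prove $\sigma(P_{\rm opt}(F))=P_{\rm opt}(F)$, for that exhibits the $\sigma$-stable proper parabolic containing $F$ required in case (b). Recall $P_{\rm opt}(F)=P_{\lambda_{\rm opt}}$, where $\lambda_{\rm opt}$ maximises over $Y(G)$ the scale-invariant function $f$ of (\ref{eqn:normalised_numerical}) attached to $F^m$, subject to minimality of $\|\lambda_{\rm opt}\|$, and is unique subject to this up to $R_u(P_{\lambda_{\rm opt}})$-conjugacy, which leaves $P_{\lambda_{\rm opt}}$ unchanged. The Frobenius map $\sigma$ permutes the maximal tori of $G$, acts on $Y(G)$ by $\lambda\mapsto\sigma\circ\lambda$, induces a twisted-equivariant self-map of the $G$-variety $G^m$, and on the cocharacter space of a $\sigma$-stable maximal torus acts as a positive scalar times a finite-order automorphism of the root datum. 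Since the construction is free to use any length function, fix $\|\cdot\|$ invariant under the Weyl group and under that root-datum automorphism; then $\sigma$ scales $\|\cdot\|$ by a positive constant $q_0$, and the numerical function attached to $F^m$ (built from the intrinsic numerical functions of the points of $G^m$) scales by the same $q_0$, using $\sigma(F^m)=F^m$. As $f$ is homogeneous of degree $0$ (cf.~(\ref{eqn:invce})), $\sigma$ therefore carries maximisers of $f$ to maximisers and $\lambda_{\rm opt}$ to a minimal-length maximiser; by uniqueness, $\sigma\circ\lambda_{\rm opt}\in R_u(P_{\lambda_{\rm opt}})\cdot\lambda_{\rm opt}$, so $\sigma(P_{\rm opt}(F))=P_{\sigma\circ\lambda_{\rm opt}}=P_{\lambda_{\rm opt}}=P_{\rm opt}(F)$, and we are in case (b).

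The main obstacle I anticipate is exactly this last step: checking carefully that the optimal-destabilising machinery is equivariant for the Frobenius map $\sigma$ and not merely for automorphisms of $G$ as an algebraic group---that is, that the length function can indeed be chosen so that $\sigma$ acts on $Y_{\mathbb R}(G)$ as a homothety, and that the numerical function in the uniform $S$-instability formulation rescales by the matching constant under $\sigma$. As a partial sanity check, the special case $O_p(F)\neq 1$ can be disposed of with no geometric invariant theory: $O_p(F)$ is then a nontrivial unipotent subgroup, characteristic in $F$ and hence $\sigma$-stable, and the Borel--Tits parabolic $\mathcal{P}(O_p(F))$ of Example~\ref{ex:BT} is a proper parabolic with $F\le N_G(O_p(F))\le\mathcal{P}(O_p(F))$; since $\mathcal{P}(O_p(F))$ is constructed purely from the operations $N_G(\cdot)$ and $R_u(\cdot)$, which every abstract automorphism of $G$---Frobenius maps included---preserves, it is automatically $\sigma$-stable, so case (b) holds. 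The general argument above is what is needed to remove the hypothesis $O_p(F)\neq 1$.
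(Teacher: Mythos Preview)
Your proposal is correct and follows exactly the approach the paper sketches: assume $F$ is not $G$-cr and argue that $P_{\rm opt}(F)$ is $\sigma$-stable. The paper adds only one detail you did not isolate, namely that the $\sigma$-equivariance of the optimal construction---precisely the obstacle you flag---requires extra care in the Suzuki/Ree cases $(G,p)\in\{(B_2,2),(F_4,2),(G_2,3)\}$, where the exceptional isogeny interchanges long and short roots and your description of $\sigma$ on $Y_{\mathbb R}(T)$ as a positive scalar times a root-datum automorphism is not quite on the nose.
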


\noindent For if (a) does not hold then $F\leq P_{\rm opt}(F)$, which is a proper parabolic subgroup of $G$.   The idea is to show that $P_{\rm opt}(F)$ is $\sigma$-stable (there are some complications when $(G,p)= (B_2,2)$, $(F_4,2)$ or $(G_2,3)$).

Proposition~\ref{prop:Gcr_not_Gcr} yields a bound on the number of maximal subgroups of simple groups of Lie type \cite[Thm.\ 1.2]{LMS}:

\begin{thm}
 Let $N, R\in {\mathbb N}$ be positive integers, and let $\Gamma$ be an almost simple group whose socle is a finite simple group of Lie type of rank at most $R$. Then the number of conjugacy classes of maximal subgroups of order at most $N$ in $\Gamma$ is bounded by a function $f(N, R)$ of $N$ and $R$ only.
\end{thm}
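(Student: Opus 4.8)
The plan is to reduce the statement, via the structure theory of almost simple groups, to a counting problem for a fixed simple algebraic group of bounded dimension, and then to feed in a \emph{uniform} version of Theorem~\ref{thm:fin_cr_rep}. Write $T$ for the socle of $\Gamma$: a finite simple group of Lie type in defining characteristic $p$ and of rank $\leq R$, which we realise as a normal subgroup of $G^\sigma$, where $G$ is the corresponding adjoint simple algebraic group over the algebraic closure of ${\mathbb F}_p$ and $\sigma$ is a Frobenius map. Since $G$ has rank $\leq R$, both ${\rm dim}(G)$ and $[G^\sigma:T]$ are bounded in terms of $R$ alone. If $|T|\leq N$ then the underlying field, hence $|{\rm Out}(T)|$ and $|\Gamma|$, is bounded in terms of $N$, and there is nothing to prove; so assume $|T|>N$. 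Let $M$ be a maximal subgroup of $\Gamma$ with $|M|\leq N$. Then $T\not\leq M$, and, setting $M_0:=M\cap T$, the fact that $T$ is the unique minimal normal subgroup of $\Gamma$ forces (by maximality of $M$) either $M_0=1$ or $M=N_\Gamma(M_0)$.

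First I would dispose of two soft cases. If $M_0=1$, then $M$ is a complement to $T$ in $\Gamma$, so $M$ embeds in ${\rm Out}(T)$ with $|M|\leq N$; since ${\rm Out}(T)$ is an extension of a bounded diagonal part by a cyclic field part and a graph part of order $\leq 6$, it has only boundedly many (in terms of $N$ and $R$) subgroups of order $\leq N$, and a standard argument bounds the number of conjugacy classes of complements accordingly (see \cite{LMS}). If $M_0\neq 1$ but $M_0$ is not $G$-cr, then $P_{\rm opt}(M_0)$ is a proper parabolic subgroup of $G$ which, being canonical, is stabilised by $\sigma$ (the essential point in the proof of Proposition~\ref{prop:Gcr_not_Gcr}, modulo the exceptional cases $(B_2,2),(F_4,2),(G_2,3)$, handled separately) and by $M=N_\Gamma(M_0)$; as $N_\Gamma(P_{\rm opt}(M_0))\neq\Gamma$, maximality gives $M=N_\Gamma(P_{\rm opt}(M_0))$. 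Now $G$ has at most $2^R$ conjugacy classes of parabolic subgroups, each containing at most one $G^\sigma$-class of $\sigma$-stable members (by Lang's theorem, since parabolic subgroups are connected and self-normalising), so there are at most $c(R)$ classes of such $M$.

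In the main case $M_0=M\cap T\neq 1$ is $G$-cr. Since $M\mapsto M_0$ is $\Gamma$-equivariant, it suffices to bound the number of $\Gamma$-conjugacy classes of $G$-cr subgroups of $T$ of order $\leq N$; passing to $T$-conjugacy (which is finer) and then using $[G^\sigma:T]\leq c(R)$, it suffices to bound the number of $G^\sigma$-conjugacy classes of $G$-cr subgroups of $G^\sigma$ of order $\leq N$. There are boundedly many isomorphism types $F$ with $|F|\leq N$; for each, a single $G$-conjugacy class of subgroups $\cong F$ meets $G^\sigma$ in at most $|N_G(M_0)/N_G(M_0)^0|$ classes under $G^\sigma$, by the Lang--Steinberg theorem, and this number is bounded in terms of $|F|$ and ${\rm dim}(G)$ (it maps to ${\rm Aut}(F)$ with kernel the component group of $C_G(M_0)$, which is controlled since $C_G(M_0)$ is reductive — indeed $G$-cr — by Proposition~\ref{prop:normcent}, being a centraliser of finitely many elements of bounded order). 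Everything is thus reduced to the following assertion, which I regard as the main obstacle: \emph{the number of $G$-conjugacy classes of $G$-cr subgroups of $G$ isomorphic to a finite group $F$ is bounded by a function of $|F|$ and ${\rm dim}(G)$, uniformly in $p$}. Granting it, $f(N,R)$ is obtained by collecting the bounds above.

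To attack this assertion I would first reduce to the $G$-irreducible case: a $G$-cr subgroup contained in a proper parabolic $P$ lies in some Levi subgroup $L$ of $P$ and is then $L$-cr by Proposition~\ref{prop:Gcr_Lcr}; as $G$ has at most $2^R$ classes of Levi subgroups, each of dimension $<{\rm dim}(G)$, induction on ${\rm dim}(G)$ reduces everything to bounding the number of conjugacy classes of $G$-irreducible subgroups $\cong F$. For these one returns to the geometry behind Theorem~\ref{thm:fin_cr_rep}: by Theorem~\ref{thm:Gcr_crit}, $G$-cr homomorphisms $F\to G$ correspond to closed $G$-orbits in the variety ${\rm Hom}(F,G)\subseteq G^m$ (with $m$ bounded in terms of $|F|$), which is cut out by equations whose number and degrees are bounded in terms of $|F|$; the difficulty is to turn this into a characteristic-free bound on the number of closed orbits. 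For good primes (all but $p=2,3,5$, hence all but boundedly many for bounded rank) one would aim to control the geometry uniformly using the separability techniques of Section~\ref{sec:Gcr_GIT} (e.g.\ Proposition~\ref{prop:sep_red_pair}), while the finitely many bad primes, with $G$ of bounded rank, give finitely many groups $G$, each already handled by Theorem~\ref{thm:fin_cr_rep}. This uniform, characteristic-independent count of $G$-irreducible subgroups is the crux; the remaining comparisons between $G$-, $G^\sigma$- and $\Gamma$-conjugacy, and the two soft cases above, are routine once it is available.
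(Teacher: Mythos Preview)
The paper does not actually prove this theorem: it is quoted from \cite{LMS}, and the surrounding text merely highlights one key ingredient---that for embeddings $\rho\colon F\to G^\sigma$ with $G$-cr image, Theorem~\ref{thm:fin_cr_rep} combined with Lang's Theorem gives a bound on the number of $G^\sigma$-conjugacy classes that is independent of $\sigma$. Your proposal is entirely consistent with this sketch and uses the same three pivots (the dichotomy of Proposition~\ref{prop:Gcr_not_Gcr}, Theorem~\ref{thm:fin_cr_rep}, and Lang--Steinberg), so at the level at which the paper engages with the argument, you are on the same track.

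Where you go further than the paper is in flagging the uniformity-in-$p$ issue: the paper's sentence says the bound is independent of $\sigma$, but for $f(N,R)$ to exist one also needs it to be independent of the defining characteristic, since $G$ itself varies with $p$. You are right that this is the genuine content, and your reduction to the $G$-irreducible case via Proposition~\ref{prop:Gcr_Lcr} plus induction on Levi subgroups is the natural route. Your handling of the finitely many bad primes is fine (finitely many types of rank $\le R$ times finitely many bad primes gives finitely many fixed $G$, each covered by Theorem~\ref{thm:fin_cr_rep}); the good-prime case you leave as the acknowledged crux, which is honest. One small point to tighten: in your Lang--Steinberg step you bound $|N_G(M_0)/N_G(M_0)^0|$ by appealing to reductivity of $C_G(M_0)$, but reductivity alone does not bound a component group---you need an argument specific to centralisers in a simple group of bounded rank (e.g.\ bounding the component group of the centraliser of each of the boundedly many generators of $M_0$, whose orders are bounded by $|F|$). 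This is known but not quite as immediate as you suggest.
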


\noindent An important ingredient in the proof is the following observation.  If $F$ is a finite group of order $N$ then of course the number $n= n(F,\sigma)$ of $G^\sigma$-conjugacy classes of embeddings $\rho\colon F\ra G^\sigma$ is finite, since $G^\sigma$ is finite.  If we consider embeddings $\rho$ such that $\rho(F)$ is $G$-cr, however, then Theorem~\ref{thm:fin_cr_rep} together with Lang's Theorem shows there is a bound for $n$ that does not depend on $\sigma$.

\subsection{Geometric invariant theory}

As we have seen, geometric invariant theory is an important tool for proving results about $G$-complete reducibility.  Now we give some results from geometric invariant theory which can be proved using ideas from $G$-complete reducibility.

We start by giving a rigidity result for $G$-cr subgroups.

\begin{prop}[{\cite[Lem.\ 4.1]{martin_genred}}]
\label{prop:rigidity}
 The group $G$ has only countably many conjugacy classes of $G$-cr subgroups.
\end{prop}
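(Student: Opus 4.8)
The plan is to convert the statement, via the geometric criterion of Theorem~\ref{thm:Gcr_crit}, into a question about closed orbits in the varieties $G^m$, and then to run a Noetherian induction.

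I begin with two reductions. If $k=\overline{\mathbb{F}_p}$ then, as noted in Remark~\ref{rem:topfg}, $G=\bigcup_q G(\mathbb{F}_q)$ is a countable group, so it has only countably many subgroups and there is nothing to prove; so I assume $k$ is transcendental over $\mathbb{F}_p$, in which case (again by Remark~\ref{rem:topfg}) every reductive subgroup of $G$ is topologically finitely generated. Since every $G$-cr subgroup of $G$ is reductive (Example~\ref{ex:BT}), every $G$-cr subgroup has the form $H=\mathcal{G}(\mathbf{h})$ for some $m\in\mathbb{N}$ and some $\mathbf{h}\in G^m$; and by Theorem~\ref{thm:Gcr_crit} such an $H$ is $G$-cr precisely when the conjugacy class $G\cdot\mathbf{h}$ is closed. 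Thus it is enough to fix $m$ and prove that the subgroups $\mathcal{G}(\mathbf{h})$, as $\mathbf{h}$ runs over the points of $G^m$ with closed $G$-orbit, comprise only countably many $G$-conjugacy classes.

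To prove this I would use Noetherian induction on $G$-stable closed subvarieties $Z\subseteq G^m$, the inductive claim being that $\{[\mathcal{G}(\mathbf{h})]\mid \mathbf{h}\in Z,\ G\cdot\mathbf{h}\text{ closed in }G^m\}$ is a countable set of conjugacy classes. One may assume $Z$ irreducible, and $\dim Z=0$ is trivial. The heart of the matter is a \emph{generic constancy} statement: there is a single conjugacy class $[H_0]$ such that, outside a countable union $\bigcup_i Z_i$ of $G$-stable closed subvarieties $Z_i\subsetneq Z$, every $\mathbf{h}\in Z$ with closed orbit has $\mathcal{G}(\mathbf{h})$ conjugate to $H_0$. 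Granting this, since $Z$ is irreducible each $Z_i$ has every irreducible component of dimension $<\dim Z$, so the induction hypothesis applies to each $Z_i$; as a countable union of countable sets is countable, the inductive claim follows. The exceptional subvarieties $Z_i$ here arise as the Zariski closures of the $G$-stable loci $\{\mathbf{h}\in Z: \mathcal{G}(\mathbf{h})\text{ is conjugate to }K\}$ for the various non-generic classes $[K]$, and part of the assertion is precisely that only countably many classes $[K]$ occur at all.

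I expect the generic constancy statement to be the main obstacle. One approach: after replacing $Z$ by a proper $G$-stable closed subvariety if necessary (absorbed into the induction, using that $\mathbf{h}\mapsto\dim G\cdot\mathbf{h}$ is lower semicontinuous and that the GIT quotient $G^m\to G^m/\!\!/G$ sends $G$-stable closed sets to closed sets), one reduces to the case where $\dim C_G(\mathcal{G}(\mathbf{h}))=\dim G-\dim G\cdot\mathbf{h}$ is constant on the closed-orbit locus of $Z$; then one stratifies that locus according to the discrete invariants of the reductive group $\mathcal{G}(\mathbf{h})$ --- its root datum, together with the sublattice of $Y(T)$ cut out by a maximal torus of $\mathcal{G}(\mathbf{h})$ conjugated into a fixed maximal torus $T$ of $G$ --- of which there are only countably many; on each such stratum one argues that the subgroup is determined up to $G$-conjugacy, or else the stratum fails to be dense and one recurses. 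The essential point is that $G$-complete reducibility forces a rigidity that is genuinely false for arbitrary subgroups: as Remark~\ref{rem:topfg} records, $\mathrm{SL}_2(k)$ has uncountably many conjugacy classes of subgroups isomorphic to $C_p\times C_p$, but these are unipotent, hence never $G$-cr, so their orbits are never closed and they are correctly excluded. An alternative route, localising the difficulty differently, is to prove first that $G$ has countably many conjugacy classes of connected reductive subgroups (the radical contributing countably many subtori of $T$, the semisimple part being controlled by bounded combinatorial data), and then, for a fixed $G$-cr subgroup $M=H^0$, to handle the finite group $H/M$ inside the reductive group $N_G(M)/M$ --- legitimate because $N_G(M)$ is $G$-cr by Proposition~\ref{prop:normcent} --- by appealing to Theorem~\ref{thm:fin_cr_rep}.
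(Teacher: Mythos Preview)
The paper gives no proof of this proposition; it simply cites \cite[Lem.~4.1]{martin_genred} and passes immediately to an application. So there is nothing in the paper to compare your argument against, and I assess the proposal on its own terms.

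Your reductions (to $k$ transcendental over $\mathbb{F}_p$, and then via Theorem~\ref{thm:Gcr_crit} to closed $G$-orbits in $G^m$) are correct, and the Noetherian-induction scaffolding is sound. But the substance lies entirely in the generic-constancy step, which you flag as the obstacle and do not prove. In your first sketch, stratifying by root datum together with the sublattice of $Y(T)$ cut out by a maximal torus $S$ of $\mathcal{G}(\mathbf{h})^0$ does not obviously pin down the conjugacy class: once $S$ is not maximal in $G$, the $S$-weight spaces in $\mathfrak{g}$ can have dimension greater than one, and you have not explained why the root subgroups of $\mathcal{G}(\mathbf{h})^0$ --- hence $\mathcal{G}(\mathbf{h})^0$ itself --- are then determined up to conjugacy, nor where the closed-orbit hypothesis is used. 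In your alternative route, ``countably many conjugacy classes of connected reductive subgroups'' is justified by saying the semisimple part has bounded combinatorial data; but that bounds the isomorphism type, not the $G$-conjugacy class of the embedded subgroup, which is exactly the connected case of what you are trying to prove. And to invoke Theorem~\ref{thm:fin_cr_rep} inside $N_G(M)/M$ you need $H/M$ to be completely reducible there; this is true, but it requires an argument (through the non-connected formalism of Section~\ref{sec:nonconn}) that you have not supplied. You have correctly located where the difficulty lives; you have not yet resolved it.
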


\noindent Suppose we are given a conjugation-stable family ${\mathcal F}$ of subgroups of $G$ that is parametrised algebraically: for instance, if $X$ is a $G$-variety then we can take ${\mathcal F}$ to be the family of stabilisers $G_x$ as $x$ varies over the points of $X$.  Suppose moreover that every $H\in {\mathcal F}$ is $G$-cr.  Then ${\mathcal F}$ contains only finitely many conjugacy classes of subgroups.  The proof has a model-theoretic flavour.  Since ${\mathcal F}$ is parametrised algebraically, it is given by first-order conditions, so without loss we can extend scalars and assume the algebraically closed field $k$ is uncountable.  Then ${\mathcal F}$ has to contain either only finitely many, or uncountably many, conjugacy classes of subgroups, essentially because a variety over $k$ is either finite or uncountable.  But by Proposition~\ref{prop:rigidity}, ${\mathcal F}$ can contain at most countably many conjugacy classes of subgroups, so it contains only finitely many.

\begin{cor}[{\cite[Cor.\ 1.5]{martin_genred}}]
\label{cor:princ_stab}
 Let $X$ be a quasi-projective $G$-variety, and suppose $X$ has an open dense set $U_1$ such that $G_x$ is $G$-cr for all $x\in U_1$.  Then $X$ has an open dense subset $U_2$ such that the stabilisers $G_x$ for $x\in U_2$ are all conjugate to each other.
\end{cor}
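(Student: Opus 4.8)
The plan is to combine Proposition~\ref{prop:rigidity} with the fact --- already used in the discussion following that proposition --- that a variety over an uncountable algebraically closed field cannot be written as a countable union of proper closed subvarieties. Throughout I take $X$ to be irreducible; for reducible $X$ one works one irreducible component at a time, and the conclusion should be read within each component. First I would reduce to a convenient $U_1$. Since a $G$-conjugate of a $G$-cr subgroup is again $G$-cr, the set $G\cdot U_1=\bigcup_{g\in G}g\cdot U_1$ is a $G$-stable open dense subset of $X$ on which every stabiliser is still $G$-cr, so I may replace $U_1$ by it and assume $U_1$ is $G$-stable. As $x\mapsto\dim G_x=\dim G-\dim(G\cdot x)$ is upper semicontinuous (orbit dimension is lower semicontinuous), after deleting a proper closed subset I may further assume $\dim G_x$ equals a constant $m$ on $U_1$. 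Finally, by Proposition~\ref{prop:rigidity} the stabilisers $\{G_x:x\in U_1\}$, being $G$-cr, meet only countably many $G$-conjugacy classes; fix representatives $H_\alpha$ for $\alpha$ in a countable index set $I$ and put $Y_\alpha=\{x\in U_1:G_x\text{ is }G\text{-conjugate to }H_\alpha\}$, so $U_1=\bigsqcup_{\alpha\in I}Y_\alpha$.

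It now suffices to prove two things: (i) each $Y_\alpha$ is a constructible subset of $X$; and (ii) some $Y_\alpha$ is dense in $X$. Given these, the dense constructible set $Y_\alpha$ contains a nonempty open subset $U_2$ of $X$, and by construction $G_x$ is $G$-conjugate to $H_\alpha$ for every $x\in U_2$, which is exactly the assertion. For (ii) I argue by contradiction: if no $Y_\alpha$ is dense, extend scalars to an uncountable algebraically closed field $K\supseteq k$; then $(U_1)_K$ is an irreducible $K$-variety, each $\overline{Y_\alpha}$ is a proper closed subvariety of $X$ so $(\overline{Y_\alpha})_K\cap(U_1)_K$ is a proper closed subvariety of $(U_1)_K$, and $(U_1)_K=\bigcup_{\alpha\in I}(\overline{Y_\alpha})_K\cap(U_1)_K$ expresses $(U_1)_K$ as a countable union of proper closed subvarieties --- impossible.

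The real work is (i). Here I would first use Chevalley's theorem to make a reduction: because $U_1$ is $G$-stable, $Y_\alpha$ is the image under the projection $G\times U_1\to U_1$ of $\{(g,x):g\cdot x\in F_\alpha\}$, where $F_\alpha=\{x\in U_1:G_x=H_\alpha\}$; since images and preimages of constructible sets under morphisms of varieties are constructible, it is enough to show $F_\alpha$ is constructible. Now $F_\alpha=X^{H_\alpha}\cap U_1\cap\{x:G_x\subseteq H_\alpha\}$, where $X^{H_\alpha}$ is the (closed) fixed-point set of $H_\alpha$ acting on $X$; on $X^{H_\alpha}\cap U_1$ one has $H_\alpha\subseteq G_x$ with $\dim G_x=m=\dim H_\alpha$, hence $G_x^0=H_\alpha^0$, and the leftover condition $G_x\subseteq H_\alpha$ becomes the condition that the \emph{finite} component group $G_x/G_x^0$ have the same order as $H_\alpha/H_\alpha^0$. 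I would verify constructibility of this last condition via the stabiliser group scheme $\mathcal G=\{(g,x)\in G\times X:g\cdot x=x\}\to X$: generic flatness provides an open dense subset over which $\mathcal G$ is flat with equidimensional fibres, on such a subset $x\mapsto\#\pi_0(G_x)$ ought to be upper semicontinuous (using that the $G_x$ are reductive, hence smooth), and Noetherian induction on $X$ would then give constructibility of $F_\alpha$ globally. I expect making this last step fully rigorous --- controlling the component group of the stabiliser in a family, in the possible presence of non-separability --- to be the main obstacle; everything else is formal manipulation with Chevalley's theorem, semicontinuity of orbit dimension, and the countable-union property of varieties over uncountable fields.
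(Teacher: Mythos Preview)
Your strategy --- rigidity plus the impossibility of covering an irreducible variety over an uncountable field by countably many proper closed subsets --- is the paper's idea, but the paper organises it differently, and the difference matters. The paper extends scalars \emph{first}, justifying this by the remark that the family of stabilisers is ``parametrised algebraically'' and hence ``given by first-order conditions''; only then does it apply Proposition~\ref{prop:rigidity} over the uncountable field, concluding that there are in fact only \emph{finitely} many conjugacy classes of stabilisers. With finitely many constructible pieces partitioning $U_1$, one is automatically dense, and your step~(ii) becomes unnecessary.

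Your step~(ii) as written has a genuine gap. The claimed equality $(U_1)_K=\bigcup_{\alpha}(\ovl{Y_\alpha})_K\cap(U_1)_K$ does not follow from $U_1=\bigcup_\alpha Y_\alpha$ over $k$: a countable set-theoretic covering of the $k$-points need not base-change to a covering of the $K$-points. (Take $k=\ovl{{\mathbb F}_p}$, so that $U_1(k)$ is itself countable and could in principle be covered by countably many singletons; after base change the right-hand side is still countable while the left-hand side is not.) What is missing is exactly the model-theoretic step the paper invokes: one must know that the conditions cutting out the $Y_\alpha$ make sense over $K$ and still exhaust $(U_1)_K$ --- in particular that every $K$-point of $U_1$ has $G$-cr stabiliser, and that this stabiliser is $G(K)$-conjugate to one of the $H_\alpha$ already chosen over $k$. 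Neither is automatic. The clean fix is to base-change \emph{before} choosing the $H_\alpha$ (and then check that the conclusion descends back to $k$), which brings you to the paper's route.

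Your step~(i) and its acknowledged difficulty with component groups in families are along the right lines; the paper does not spell this out either, deferring to \cite{martin_genred} for the details.
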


\noindent Corollary~\ref{cor:princ_stab} fails without the assumption that the $G_x$ are $G$-cr: see \cite[Ex.\ 8.3]{martin_genred}.

We finish with one further GIT-theoretic result.  Let $X$ be a $G$-variety and let $H$ be a $G$-cr subgroup of $X$.  Then the fixed point set $X^H$ is stabilised by $N_G(H)$, and the inclusion $X^H\subseteq X$ gives rise to a morphism $\psi\colon X^H/\!\!/N_G(H)\ra X/\!\!/G$ of quotient varieties (note that $N_G(H)$ is reductive, by Section~\ref{sec:Gcr_GIT}). 

\begin{prop}[{\cite[Thm.\ 1.1]{BGM}}]
 The morphism $\psi$ is finite.
\end{prop}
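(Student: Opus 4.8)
The plan is to reduce to the case where $X$ is a $G$-module and then invoke a graded Nakayama argument; the one place where the hypothesis that $H$ is $G$-cr enters is a step in which the Kempf optimal destabilising cocharacter of Section~\ref{sec:opt} gets pushed into $N:= N_G(H)$ (recall $N$ is reductive by Section~\ref{sec:Gcr_GIT}, so all the quotients below make sense). For the reduction, embed $X$ as a closed $G$-stable subvariety of a $G$-module $V$; then $X^H= X\cap V^H$ is a closed $N$-stable subvariety of $V^H$. Since $G$ and $N$ are reductive, restriction of invariants is surjective, so $X/\!\!/G\ra V/\!\!/G$ and $X^H/\!\!/N\ra V^H/\!\!/N$ are closed immersions, and the square formed by these two maps together with $\psi$ and $\psi_V\colon V^H/\!\!/N\ra V/\!\!/G$ commutes, since all four composites $k[V]^G\ra k[X^H]^N$ are restriction of functions along $X^H\subseteq V$. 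As the two vertical maps on coordinate rings are surjective and compatible with the horizontal ones, module-finiteness of $k[V^H]^N$ over the image of $k[V]^G$ descends to module-finiteness of $k[X^H]^N$ over the image of $k[X]^G$; hence it suffices to treat the case $X= V$.

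Now $k[V]^G$ and $k[V^H]^N$ inherit the gradings of ${\rm Sym}(V^*)$ and ${\rm Sym}((V^H)^*)$, so $V/\!\!/G$ and $V^H/\!\!/N$ are affine cones with vertices $\pi_V(0)$ and $\pi_{V^H}(0)$, and $\psi$ is a morphism of cones. By graded Nakayama, $\psi$ is finite provided $\psi^{-1}(\pi_V(0))= \{\pi_{V^H}(0)\}$. A point of $\psi^{-1}(\pi_V(0))$ is the class of a closed $N$-orbit $N\cdot y$ with $y\in V^H$ and $\pi_V(y)= \pi_V(0)$, which (because $\pi_V$ separates closed orbits) means $0\in \ovl{G\cdot y}$. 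So it is enough to prove: \emph{if $y\in V^H$ and $0\in \ovl{G\cdot y}$, then $0\in \ovl{N\cdot y}$} (the case $y= 0$ being trivial), since then $\pi_{V^H}(y)= \pi_{V^H}(0)$.

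To prove this, assume $0\neq y\in V^H$ is $G$-unstable and let $\lambda= \lambda_{\rm opt}(y)\in Y(G)$ be the optimal destabilising cocharacter, so $\lim_{a\to 0}\lambda(a)\cdot y= 0$ and $G_y$ normalises, hence lies in, $P_{\rm opt}(y)= P_\lambda$; as $y\in V^H$ we have $H\leq G_y\leq P_\lambda$. Since $H$ is $G$-cr there is $u\in R_u(P_\lambda)$ with $uHu^{-1}\leq L_\lambda$; equivalently, $\mu:= u^{-1}\cdot\lambda$ satisfies $P_\mu= P_\lambda$, $L_\mu= u^{-1}L_\lambda u$ and $H\leq L_\mu= C_G({\rm Im}(\mu))$, so ${\rm Im}(\mu)$ centralises $H$ and $\mu\in Y(C_G(H))\subseteq Y(N)$. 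Finally, $\mu(a)\cdot y= u^{-1}\cdot(\lambda(a)\cdot(u\cdot y))$; applying Lemma~\ref{lem:lim_P_dot_x} with $u\in P_\lambda$ gives $\lim_{a\to 0}\lambda(a)\cdot(u\cdot y)= c_\lambda(u)\cdot 0= 0$, and Remark~\ref{rem:limit_props}(a), applied to the linear morphism $v\mapsto u^{-1}\cdot v$, then gives $\lim_{a\to 0}\mu(a)\cdot y= 0$. Each $\mu(a)\cdot y$ lies in $N\cdot y$, so $0\in \ovl{N\cdot y}$, which finishes the argument.

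The step I expect to be the main obstacle is not any single computation but pinning down the framework: verifying that the graded Nakayama criterion genuinely applies here (so that checking one fibre suffices), that the fibre $\psi^{-1}(\pi_V(0))$ is correctly identified with the classes of closed $N$-orbits of $G$-unstable points of $V^H$, and that the reduction to a module is legitimate --- in particular that $X^H/\!\!/N\ra V^H/\!\!/N$ really is a closed immersion and that the square commutes. With those bookkeeping points in place, the only use of $G$-complete reducibility is the cocharacter-pushing step, which goes through cleanly using the machinery of Sections~\ref{sec:GIT} and \ref{sec:opt}.
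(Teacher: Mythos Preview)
The paper does not give a proof of this proposition; it is simply stated and attributed to \cite[Thm.\ 1.1]{BGM}. So there is no ``paper's own proof'' to compare against.

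That said, your argument is correct and is essentially the natural proof given the machinery developed in these notes (and is indeed the strategy of the cited reference). A few remarks on the bookkeeping points you flag at the end:

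\begin{itemize}
\item The reduction to a module is fine. Reductivity of $G$ (resp.\ $N$) gives a Reynolds operator, so $k[V]^G\twoheadrightarrow k[X]^G$ and $k[V^H]^N\twoheadrightarrow k[X^H]^N$; the square commutes because all four maps are just restriction of functions. If $k[V^H]^N$ is module-finite over $k[V]^G$, then so is its quotient $k[X^H]^N$, and since $k[V]^G\to k[X^H]^N$ factors through $k[X]^G$, finiteness over $k[X]^G$ follows.
\item The graded Nakayama step is legitimate: $k[V]^G$ and $k[V^H]^N$ are connected ${\mathbb N}$-graded finitely generated $k$-algebras and $\psi^*$ preserves the grading, so it suffices that the scheme-theoretic fibre over the cone point be zero-dimensional, which is what your set-theoretic statement gives (a finitely generated graded $k$-algebra with a single prime is Artinian).
\item Your identification of the fibre is correct, and you do not need $N\cdot y$ to be closed: for \emph{any} $y\in V^H$ with $0\in\ovl{G\cdot y}$, once you show $0\in\ovl{N\cdot y}$ you get $\pi_{V^H}(y)=\pi_{V^H}(0)$.
\item The cocharacter-pushing step is exactly right, and your use of optimality is essential: it is what guarantees $H\leq G_y\leq P_\lambda$, which is what lets $G$-complete reducibility produce the conjugating $u$. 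An arbitrary destabilising $\lambda$ would not do.
\end{itemize}
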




\end{document}